\newcommand{\bbC}{{\mathbb C}}
\newcommand{\bbH}{{\mathbb H}}  
\newcommand{\bbN}{{\mathbb N}}
\newcommand{\bbR}{{\mathbb R}}
\newcommand{\bbZ}{{\mathbb Z}}
\def\cE{{\mathcal E}}
\def\cF{{\mathcal F}}
\def\cH{{\mathcal H}}
\def\cI{{\mathcal I}}
\def\cK{{\mathcal K}}
\def\cL{{\mathcal L}}
\def\cP{{\mathcal P}}
\def\cS{{\mathcal S}}
\def\cU{{\mathcal U}}
\def\cV{{\mathcal V}}
\def\cW{{\mathcal W}}
\def\cZ{{\mathcal Z}}
\def\sF{{\mathscr F}}
\def\sL{{\mathscr L}}
\def\sU{{\mathscr U}}
\def\Re{\operatorname{Re}}
\def\Im{\operatorname{Im}}
\def\la{\langle}
\def\ra{\rangle}
\def\eps{\varepsilon}
\def\z{\zeta} 
\def\vp{\varphi}
\def\ov{\overline}
\def\p{\partial}
\def\ms{\medskip}
\def\endpf{\medskip\hfill $\Box$

\smallskip 

}  
\def\epf{\endpf}
\def\Hol{\operatorname{Hol}}
\def\tr{\operatorname{tr}}
\def\HS{\operatorname{HS}}
\def\ran{\operatorname{ran}}
\def\supp{\operatorname{supp}}
\def\sinc{\operatorname{sinc}}
\def\PW{\cP\cW}
\newtheorem{thm}{Theorem}[section]
\newtheorem{prop}[thm]{Proposition}
\newtheorem{cor}[thm]{Corollary}
\newtheorem{lem}[thm]{Lemma}
\newtheorem{defn}[thm]{Definition}
\newtheorem{remark}[thm]{Remark}
\begin{document}

\title[Sampling of entire functions of exponential type in
  $\bbC^{n+1}$]{Sampling in spaces of entire functions \\ of exponential type in
  $\bbC^{n+1}$} 
\author[A. Monguzzi, M. M. Peloso, M. Salvatori]{Alessandro Monguzzi*, 
Marco M. Peloso**, Maura Salvatori**}

 \address{*Dipartimento di Matematica e Applicazioni, Universit\`a
   degli Studi di Milano--Bicocca, Via R. Cozzi 55, 20126 Milano,
   Italy}   
\email{{\tt alessandro.monguzzi@unimib.it, alessandro.monguzzi@unibg.it}}
 \address{* Dipartimento di Ingegneria Gestionale, dell'Informazione e della Produzione, Universit\`a
   degli Studi di Bergamo, Viale G. Marconi 5, 24044 Dalmine,
   Italy}   
\address{**Dipartimento di Matematica, Universit\`a degli Studi di
  Milano, Via C. Saldini 50, 20133 Milano, Italy}
\email{{\tt marco.peloso@unimi.it}}
\email{{\tt maura.salvatori@unimi.it}} 
\keywords{Entire functions of exponential type, Siegel upper
  half-space, Paley--Wiener theorem, Heisenberg group, sampling.}
\thanks{{\em Math Subject Classification 2020} 32A15, 32A37, 32A50, 46E22.}
\thanks{The authors are members of the
  Gruppo Nazionale per l'Analisi Matematica, la Probabilit\`a e le
  loro 
Applicazioni (GNAMPA) of the Istituto Nazionale di Alta Matematica (INdAM) }
\thanks{The first author is partially supported by the 2020
  INdAM--GNAMPA grant  \emph{Alla frontiera tra l'analisi complessa in
    pi\`u variabili e l'analisi armonica}. The second and
    third authors are  partially supported by the 2020
   INdAM--GNAMPA grant {\em Fractional Laplacians and subLaplacians on Lie groups and trees}.}
 \begin{abstract}
In this paper we consider the question of sampling for spaces of
entire functions of exponential type in several variables.  The
novelty resides in the growth
condition we impose on the entire functions, that is, that their restriction to a hypersurface  is square integrable with
respect to a natural measure.  The hypersurface we consider is the
boundary $b\cU$ of the Siegel upper half-space $\cU$ and it is
fundamental that $b\cU$ can be identified with the Heisenberg group
$\bbH_n$. We consider entire functions in $\bbC^{n+1}$ of exponential 
type with respect to the hypersurface $b\cU$
whose restriction to $b\cU$ are square integrable with respect to the
Haar measure on $\bbH_n$. For these functions we prove  a version
of the Whittaker--Kotelnikov--Shannon Theorem.
Instrumental in our work are spaces of 
entire functions in $\bbC^{n+1}$ of exponential 
type with respect to the hypersurface $b\cU$ whose
restrictions to $b\cU$
belong to some
homogeneous Sobolev space on $\bbH_n$.  For these spaces, using
the group Fourier transform on $\bbH_n$, we prove a Paley--Wiener type
theorem  and a Plancherel--P\'olya type inequality. 
\end{abstract}
\maketitle

\section{Introduction and statement of the main results}

The  classical Paley--Wiener theorem characterizes the entire functions of
exponential type $a$ in the complex plane whose restriction to the
real line is square integrable as the space of $L^2$-functions whose
Fourier transform is supported in the interval $[-a,a]$.  For such
functions perhaps the most far reaching result is the
Whittaker--Kotelnikov--Shannon sampling theorem.
These results have been extended to several
variables for functions in $\bbC^n$ whose restrictions to the surface
$\{\Im z=0\}=\bbR^n$ have Fourier transform supported in a compact set
$\Omega$, see e.g. \cite{Stein-Weiss} and \cite{Landau}.

In this paper we take a different approach. We consider a hypersurface $M$
that separates the whole space $\bbC^{n+1}$ into two unbounded
connected components and  entire 
functions in $\bbC^{n+1}$ that satisfy some exponential growth
condition adapted to $M$ -- namely, of quadratic order in the complex
tangential directions to $M$ and of linear growth in the transversal direction.
We also require that these functions have restriction to $M$ which is square integrable, or more
generally is in some homogeneous Sobolev  space. For such
functions we prove 
 a Paley--Wiener type theorem and a Whittaker--Kotelnikov--Shannon
 sampling theorem. 
\ms

Consider the complex spaces
$\bbC^{n+1}$, with $n\ge1$, and the strongly pseudoconvex hypersurface 
$$
b\cU=\bigl\{ \z=(\z',\z_{n+1})\in\bbC^n\times\bbC:\, \Im
\z_{n+1} =\textstyle{\frac14} |\z'|^2 \bigr\} \,,
$$
which is the topological boundary of the
{\em  Siegel upper-half space} 
\begin{equation}\label{U-def}
\cU =\Big\{ \z=(\z',\z_{n+1})\in\bbC^n\times\bbC:\, 
\varrho(\z):= \Im\z_{n+1} -\textstyle{\frac14} |\z'|^2 >0 \Big\} \,.
\end{equation}
 It is well known that
$\cU$ is biholomorphic to
the unit ball $B$ in $\bbC^{n+1}$.
The $(1,1)$-form $\theta=\frac i2 (\ov\p-\p)\rho$ is a
pseudo-hermitian structure on $M=b\cU$, and it is non-degenerate.
Then,
$\theta\wedge(d\theta)^n$
is a volume form on $b\cU$, i.e. $\theta$ is a contact form.  Then, there
exists a natural Riemannian metric $g_\theta$ on $b\cU$.
The boundary $b\cU$ of $\cU$ can be endowed with the structure of a
nilpotent Lie group, namely the Heisenberg group $\bbH_n$.   It turns
out that the volume form $\theta\wedge(d\theta)^n$ coincides with the
Haar measure on $\bbH_n$. The Haar measure on $\bbH_n$ coincides
with two canonical measures defined 
on the strongly pseudoconvex manifold $M$,
namely the Webster metric \cite{Dragomir} and  the
Fefferman metric \cite[p. 259]{Fefferman}, resp.
The volume forms constructed starting from such metrics coincide with
the Haar measure on $\bbH_n$. 
We also remark that the induced metric from
$\bbC^{n+1}$ differs from the metric $g_{\lambda\theta}$, for every smooth
$\lambda:b\cU\to (0,+\infty)$.

In $\bbC^{n+1}$ we
introduce 
coordinates by means of a foliation of copies
of $b\cU$.    
Given $\z=(\z',\z_{n+1})\in \bbC^{n+1}$, we define $\Psi: \bbC^n\times \bbC
\to \bbC^n\times \bbR\times\bbR$ by
\begin{equation*}
\Psi(\z',\z_{n+1})
= \big( \z', \Re \z_{n+1}, \Im \z_{n+1} -\textstyle{\frac14}|\z'|^2 \big)
=: (z,t,h) 
\,.
\end{equation*}
Then,  
$\Psi$ is a $C^\infty$-diffeomorphism,  
and
\begin{equation*}
\Psi^{-1} (z,t,h) = \bigl(z,t+i\textstyle{\frac14}|z|^2+ih\bigr) =:  (\z',\z_{n+1})
\,. 
\end{equation*}

Notice that
$h=\varrho(\z',\z_{n+1})$, where $\varrho$ is as in
\eqref{U-def}. 
Clearly, the boundary $b\cU$ is characterized by the points of
$\bbC^{n+1}$ such that $\Psi(\z)=(z,t,0)$, that is, 
$h=\varrho(\z)=0$, and 
$$
\cU = 
\big\{ \z\in\bbC^{n+1}:\, \Psi(\z)=(z,t,h)\,  \text{ is such that }\
  h>0\big\}. 
$$
When $h=0$,  we write $[z,t]$ in
place of $(z,t,0)$.  Then, the boundary $b\cU$ can be identified with
the Heisenberg group $\bbH_n$, which  is the set $\bbC^n\times\bbR$ endowed
with product 
$$
[w,s][z,t]  =  \big[ w+z, s+t -\textstyle{\frac12} \Im (w\cdot\bar z)\big] \,.
$$
Recall that  $\bbH_n$ is a nilpotent Lie group  and 
the Lebesgue measure on $\bbC^n\times\bbR$ coincides with both the
right and left Haar
measure on $\bbH_n$. 

On $\bbH_n$ we consider the standard (positive) sub-Laplacian $\Delta$ and its
fractional powers $\Delta^{s/2}$ (see Section \ref{Basic-facts-sec}
for details).
Let $\cS=\cS(\bbH_n)$ denote
the space of Schwartz functions on $\bbH_n$.  
Then, we have the following  definition. 
\begin{defn}\label{dotWps-def}{\rm
For $1<p<\infty$ and $s>0$ we define  the {\em homogeneous Sobolev space} ${\dot{W}}^{s,p}={\dot{W}}^{s,p}(\bbH_n)$ as
the completion of $\cS$ 
with respect to  the norm  $\|\Delta^{s/2} \vp\|_{L^p}$, $\vp\in\cS$.
More precisely, given the equivalence relation on the space of $L^p$-Cauchy
sequences of Schwartz functions,  
$\{ \vp_k\}\sim \{ \psi_k\} $ if  $\Delta^{s/2}(\vp_k-\psi_k)\to 0$,
as $k\to+\infty$, and denoting by
$[\{ \vp_k\}]$ the equivalence classes, then 
\begin{multline*}
{\dot{W}}^{s,p}
= \Big\{ \big[\{\vp_k\}\big]: \ \{
\vp_k\}\subseteq\cS,\, \{ \Delta^{s/2}\vp_k\} \text{\ is a Cauchy
sequence in\ } L^p, \\
\text{with } 
\big\| \big[\{\vp_k\}\big]\big\|_{{\dot{W}}^{s,p}} = \lim_{k\to+\infty}
\| \Delta^{s/2} \vp_k \|_{L^p} 
\Big\}\,.
\end{multline*}
}
\end{defn}

It is easy to characterize the homogeneous  spaces
${\dot{W}}^{s,p}(\bbH_n)$ when $1<p<\infty$ and $0<s<(2n+2)/p$, see Section \ref{Basic-facts-sec}.\footnote{We
point out that $2n+2$ is the {\em homogeneous} dimension of $\bbH_n$.}

On $\bbH_n$
we define a {\em homogeneous}
norm (with respect to the natural anisotropic dilations) by setting 
\begin{equation*}
|[z,t]| := 
\bigl(\textstyle{\frac{1}{16} } |z|^4 +t^2\bigr)^{1/4}\, .
\end{equation*}
Then, we introduce a ``$\cU$-adapted norm'' in $\bbC^{n+1}$,  
\begin{equation*} 
\| \z\|_\cU =|[z,t]|^2 +|h|\,, \qquad \text{where}\quad \Psi(\z)= (z,t,h)\,.
\end{equation*}
Notice that $\| \z\|_\cU$ grows like $|[z,t]|^2 $ in the complex
tangential directions of $\bbH_n$  and 
like $|h|$ in the transversal directions.  

We now introduce the spaces of entire functions we deal with.
For a function $F$ defined on $\bbC^{n+1}$, we set 
$\widetilde
F=F\circ\Psi^{-1}$
 and $\widetilde
F_h[z,t]:=\widetilde F(z,t,h)$.  Then, in particular, $\widetilde
F_0 = F_{|_{b\cU}}$. 

\begin{defn}{\rm
Let $a>0$ be given.  
We define the space of entire functions of exponential type $a$ \emph{with
respect to the hypersurface $b\cU$} as
$$
\cE_a = \Big\{ F\in\Hol(\bbC^{n+1}): \, \text{for every } \eps>0\
\text{there exists\ } C_\eps>0  \text{ such that \ } |F(\z)|\le C_\eps
e^{(a+\eps)\|\z\|_\cU} \Big\} \,.
$$

We define the corresponding Paley--Wiener space as
$$
\PW_a  =
\big\{ F\in\cE_a:  \,\widetilde
F_0 \in L^2(\bbH_n)\ \text{with norm\ } 
\| F\|_{\PW_a} =   \|\widetilde F_0 \|_{L^2(\bbH_n)} \big\} \,.
$$
For $0<s<n+1 
$ we  define the \emph{fractional }Paley--Wiener spaces $\PW_a^s$ as
$$
\PW_a^s  =
\big\{ F\in\cE_a:  \,
\widetilde F_0 \in \dot W^{s,2}\ \text{with norm\ } 
\| F\|_{\PW_a^s} =   \|  \widetilde  F_0 \|_{\dot W^{s,2}} \big\} \,.
$$
}
\end{defn}

We point out that the fractional Paley--Wiener spaces $\PW_a^s$ arise naturally 
 in our setting, since they constitute a tool for proving our main
 results.
 In the $1$-dimensional setting
 these spaces were introduced in \cite{MPS-frac}, see also \cite{MPS-sob}.

In order to state our main results we recall the basic facts about the Fourier transform on the Heisenberg
group (see also \cite{AMPS, Folland-phase, R}).  
For $\lambda\in\bbR^*$  the  Fock space is defined as
\begin{equation*}
\cF^\lambda 
=\bigg\{ F \in\Hol(\bbC^n):\ 
\bigg( \frac{|\lambda|}{2\pi} \bigg)^n \int_{\bbC^n} |F(z)|^2\,
e^{-\frac{|\lambda|}{2} |z|^2}dz <+\infty \bigg\}
\end{equation*}
so that $\cF^\lambda = \cF^{|\lambda|}$.  
For $\lambda\in\bbR^*$ and $[z,t]\in\bbH_n$, the
Bargmann 
representation $\beta_\lambda[z,t]$ is the operator on
$\cF^\lambda$
\begin{equation}\label{Barg-rep} 
\beta_\lambda[z,t] F (w)
= \begin{cases} 
 e^{i\lambda t-\frac\lambda2 w\cdot\ov z -\frac\lambda4 |z|^2}
F(w+z)  \quad & \text{if\ } \lambda>0 ,\smallskip\cr
e^{i\lambda t+\frac\lambda2 w\cdot z +\frac\lambda4 |z|^2}
F(w+\bar z) & \text{if\ } \lambda<0 \,.
\end{cases} 
\end{equation}
 Notice that 
$\beta_\lambda[z,t] = \beta_{-\lambda} [\ov z,-t]$, when
$\lambda<0$. 
Given $f\in L^1(\bbH_n)$ its Fourier transform consists of a family of operators $\{\beta_\lambda(f)\}_{\lambda\in\bbR^*}$ where 
$\beta_\lambda(f)$ is a Hilbert--Schmidt operator 
on $\cF^\lambda$ given by
$$
 \beta_\lambda (f) F (w) 
= \int_{\bbH_n} f[z,t] \beta_\lambda[z,t]F (w)\, dzdt\,. 
$$

If $f\in L^2(\bbH_n)$ and $\|\cdot\|_{\HS}$ denotes the
Hilbert--Schmidt norm  on $\cF^\lambda$, we have Plancherel's formula
\begin{equation}\label{Plancherel}
\| f\|_{L^2(\bbH_n)}^2 = \frac{1}{(2\pi)^{n+1} }
\int_{\bbR}  \|\beta_\lambda(f)\|_{\HS}^2 |\lambda|^n\,
d\lambda\,,
\end{equation}
and, if $f\in L^1\cap L^2(\bbH_n)$, the inversion formula
\begin{equation}\label{inve-form}
f[z,t] = \frac{1}{(2\pi)^{n+1}}  
\int_{\bbR}  \tr \big( \beta_\lambda(f)\beta_\lambda[z,t]^*\big)
|\lambda|^n\, d\lambda \,.
\end{equation}

\begin{defn}\label{F-T-A2nu}{\rm
For $s\in\bbR$ we define  the space 
$\cL^2_s $ as the space of  measurable fields  of operators 
$$\tau:
\bbR^*\to \prod_{\lambda\in\bbR^*}  \sL(\cF^\lambda)
$$
 where $\sL(\cF^\lambda)$ denotes the bounded operators on
$\cF^\lambda$,  
such that
$$
\|\tau\|_{\cL^2_s}^2: = \frac{1}{(2\pi)^{n+1}} 
\int_{\bbR^*} \|\tau(\lambda)\|_{\HS}^2 \,
|\lambda|^{n+s} d\lambda <+\infty , 
$$
where 
$\|\cdot\|_{\HS} :=\|\cdot\|_{\HS(\cF^\lambda)}$. 
We also define 
$\cH^2_s$ as the subspace of  $\tau\in \cL^2_s$ such that \smallskip 
\begin{itemize} 
\item[(i)] $\tau(\lambda)=0 $  for $\lambda>0$; \smallskip 
\item[(ii)] $\ran( \tau(\lambda)) \subseteq
   \operatorname{span}\{1\} $,  when $\lambda<0$\,.\smallskip 
 \end{itemize}
If $E\subseteq (-\infty,0)$, then $\cH^2_s(E)
= \big\{ \tau\in \cH^2_s:\, \supp \tau\subseteq E\big\}$. 
Finally, when $s=0$ we simply write $\cL^2$ and $\cH^2$
in place of  $\cL^2_0$ and $\cH^2_0$, resp.
}
\end{defn}

Our first result provide the expected characterization of the spaces $\PW_a$ and $\PW_a^s,\, 0\le s<n+1$.

\begin{thm}\label{main-4} 
Let $0\le s<n+1$. If $F\in \PW_a^s$, then 
$\beta_\lambda( \widetilde F_0) \in \cH_s^2 \big([-a,0)\big)$,
\begin{equation*}
F(\z)= \widetilde F_h[z,t] =
\frac{1}{(2\pi)^{n+1}} \int_{-a}^0
e^{\lambda  h}
\tr\big(\beta_\lambda( \widetilde F_0) \beta_\lambda[z,t]^*\big)\,
|\lambda|^n d\lambda\,, 
\end{equation*}
and 
$\| F\|_{\PW_a^s} = \| \beta_\lambda( \widetilde F_0) \|_{\cL_s^2}$.
Conversely, let $\tau\in \cH_s^2 \big([-a,0)\big)$, and define 
\begin{equation}\label{PW-D-eq3}
F(\z)= \widetilde F_h[z,t] =
\frac{1}{(2\pi)^{n+1}} \int_{-a}^0
e^{\lambda  h}
\tr\big(\tau(\lambda) \beta_\lambda[z,t]^*\big)\,
|\lambda|^n d\lambda\,.
\end{equation}
Then $F\in  \PW_a^s$, $\beta_\lambda( \widetilde F_0) =\tau(\lambda)$
and $\| F\|_{\PW_a^s} = \| \tau\|_{\cL_s^2}$.  
\end{thm}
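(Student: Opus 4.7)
The plan is to prove both implications of the theorem, treating the converse first since it is largely a direct calculation and also illuminates the structure demanded of $\tau$.

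\textbf{Converse direction.} Given $\tau \in \cH^2_s([-a,0))$, the condition $\ran \tau(\lambda) \subseteq \operatorname{span}\{1\}$ lets me write $\tau(\lambda) v = \la v, \phi_\lambda\ra_{\cF^\lambda}\cdot 1$ for a unique $\phi_\lambda \in \cF^\lambda$, with $\|\tau(\lambda)\|_{\HS} = \|\phi_\lambda\|_{\cF^\lambda}$. Using \eqref{Barg-rep} and evaluating at $w=0$, I compute
\[
\tr\bigl(\tau(\lambda)\beta_\lambda[z,t]^*\bigr) = \ov{(\beta_\lambda[z,t]\phi_\lambda)(0)} = e^{-i\lambda t + \frac{\lambda}{4}|z|^2}\ov{\phi_\lambda(\bar z)}\,.
\]
Multiplying by $e^{\lambda h}$ and observing that $e^{\lambda h - i\lambda t + \lambda|z|^2/4} = e^{-i\lambda\z_{n+1}}$, the integrand in \eqref{PW-D-eq3} becomes $e^{-i\lambda\z_{n+1}}\ov{\phi_\lambda(\bar\z')}|\lambda|^n$, which is manifestly entire in $\z$. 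The Fock-space pointwise bound $|\phi_\lambda(\bar\z')| \le e^{|\lambda||\z'|^2/4}\|\phi_\lambda\|$, combined with $|e^{-i\lambda\z_{n+1}}| = e^{\lambda\Im\z_{n+1}}$ and Cauchy--Schwarz, produces the exponential-type bound $|F(\z)| \le C_\eps e^{(a+\eps)\|\z\|_\cU}$; here the assumption $s<n+1$ is needed to make $\int_{-a}^{0}|\lambda|^{n-s}\,d\lambda$ finite. Setting $h=0$ recognizes \eqref{inve-form}, so $\beta_\lambda(\widetilde F_0) = \tau(\lambda)$, and the norm identity is Plancherel \eqref{Plancherel} weighted by $|\lambda|^s$.

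\textbf{Forward direction.} Given $F \in \PW_a^s$, I will combine three ingredients: the intertwining relation
\[
\beta_\lambda(\widetilde F_h) = e^{\lambda h}\beta_\lambda(\widetilde F_0)\,,
\]
the support condition $\supp \beta_\lambda(\widetilde F_0) \subseteq [-a, 0)$, and the range/vanishing constraints of $\cH^2_s$. The intertwining follows from the transversal CR equation $\p_{\bar\z_{n+1}}F = 0$, which in $(z,t,h)$-coordinates reads $(\p_t + i\p_h)\widetilde F = 0$; since $\beta_\lambda(\p_t\vp) = -i\lambda\beta_\lambda(\vp)$, this gives $\p_h\beta_\lambda(\widetilde F_h) = \lambda\beta_\lambda(\widetilde F_h)$. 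For the range/vanishing constraints, the remaining CR equations $\p_{\bar\z'_j}F = 0$ restrict on $b\cU$ to the tangential identities $(\p_{\bar z_j} - \tfrac{i z_j}{4}\p_t)\widetilde F_0 = 0$; these operators are the right-invariant extensions of $\p_{\bar z_j}|_0$, and applying $\beta_\lambda$ yields $d\beta_\lambda(\p_{\bar z_j}|_0)\,\beta_\lambda(\widetilde F_0) = 0$. A short calculation from \eqref{Barg-rep} gives $d\beta_\lambda(\p_{\bar z_j}|_0) = \p_{w_j}$ for $\lambda<0$ and $d\beta_\lambda(\p_{\bar z_j}|_0) = -\tfrac{\lambda}{2}w_j$ for $\lambda>0$, forcing respectively $\ran\beta_\lambda(\widetilde F_0)\subseteq\operatorname{span}\{1\}$ and $\beta_\lambda(\widetilde F_0) = 0$. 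For the support in $[-a, 0)$, testing against $\psi\in\cS(\bbH_n)$ produces $g_\psi(h) := \la\widetilde F_h,\psi\ra$, which by the entirety of $F$ extends to an entire function of $h\in\bbC$ of exponential type $\le a$; the intertwining presents $g_\psi$ as a scalar Laplace transform of $\tr\bigl(\beta_\lambda(\widetilde F_0)\beta_\lambda(\psi)^*\bigr)|\lambda|^n$, and the classical Paley--Wiener theorem forces this integrand to vanish for $\lambda < -a$. Density of $\{\beta_\lambda(\psi):\psi\in\cS\}$ then transports the conclusion to $\beta_\lambda(\widetilde F_0)$ itself. The integral representation and norm identity follow from \eqref{inve-form} and Plancherel.

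\textbf{Main obstacle.} The principal difficulty will be making the intertwining identity rigorous: for $h\ne 0$ the slice $\widetilde F_h$ need not lie in $L^2(\bbH_n)$ or in $\dot W^{s,2}$, so $\beta_\lambda(\widetilde F_h)$ must be interpreted through duality with Schwartz test functions (the entire function $g_\psi(h)$ above) or through a regularization of $\widetilde F_0$ by a decaying family. Carrying this out while remaining compatible with the $\dot W^{s,2}$-norm, and tracking the constants uniformly in $h$ and $\eps$, is the delicate analytical work; the restriction $s<n+1$ is what ultimately guarantees integrability in $\lambda$ near the origin.
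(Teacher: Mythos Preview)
Your converse direction is correct and in fact more explicit than the paper's: you compute the trace concretely and exhibit the integrand as $e^{-i\lambda\z_{n+1}}\overline{\phi_\lambda(\bar\z')}$, from which holomorphy and the exponential-type bound are immediate. The paper instead uses the cruder estimate $|\tr(\tau(\lambda)\beta_\lambda[z,t]^*)|\le\|\tau(\lambda)\|_{\HS}$ and appeals to an external lemma for holomorphy.

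The forward direction, however, has a genuine gap, and it is exactly the one you flag as your ``main obstacle''. The paper's resolution is the Plancherel--P\'olya inequality (Proposition~\ref{P-P-prop}), which you do not invoke. That inequality says $\|\widetilde F_h\|_{L^2}\le e^{ah_-}\|\widetilde F_0\|_{L^2}$ for every real $h$, and it is proved via a Phragm\'en--Lindel\"of argument (Lemmas~\ref{P-L-plane} and~\ref{P-L-Siegel}). Once one has it, $\widetilde F_h\in L^2(\bbH_n)$ for all $h$, the intertwining $\beta_\lambda(\widetilde F_h)=e^{\lambda h}\beta_\lambda(\widetilde F_0)$ is rigorous, and combining Plancherel with the bound gives
\[
\int_{-\infty}^0 e^{2\lambda h}\|\beta_\lambda(\widetilde F_0)\|_{\HS}^2|\lambda|^n\,d\lambda \le e^{2ah_-}\int_{-\infty}^0\|\beta_\lambda(\widetilde F_0)\|_{\HS}^2|\lambda|^n\,d\lambda\,;
\]
letting $h\to-\infty$ forces $\supp\beta_\lambda(\widetilde F_0)\subseteq[-a,0)$ directly. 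Your proposed alternative via $g_\psi(h)=\langle\widetilde F_h,\psi\rangle$ and the one-dimensional Paley--Wiener theorem does not avoid this work: to prove that $g_\psi$ extends to an entire function of exponential type $\le a$ in the complex variable $h$ (and has the $L^2$ integrability along a line that the scalar Paley--Wiener theorem requires), you must control the growth of $\widetilde F_h$ uniformly in $[z,t]$, which is precisely the content of Lemma~\ref{P-L-Siegel} and Proposition~\ref{P-P-prop}. Without that, the Laplace-transform identity you write down is not known to converge for general complex $h$, and the appeal to scalar Paley--Wiener is circular.

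A second omission is the case $s>0$: you treat all $s$ uniformly, but when $\widetilde F_0\in\dot W^{s,2}\setminus L^2$ the Plancherel--P\'olya argument does not apply directly. The paper handles this by convolving $\widetilde F_h$ with a Schwartz function $\varphi$ whose group Fourier transform is compactly supported in $\lambda$ and has range in $\operatorname{span}\{e_0\}$; the convolution $G=F*\varphi$ then lands in $\PW_a$ (case $s=0$), the support conclusion for $\beta_\lambda(\widetilde G_0)=\varphi_0(\lambda)\beta_\lambda(\widetilde F_0)$ follows, and varying $\varphi$ transports it back to $\beta_\lambda(\widetilde F_0)$. Your ``regularization by a decaying family'' hints at this but does not carry it out.
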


We show that for functions in
$\PW_a$ a 
sampling result holds true,  extending the classical
Whittaker--Kotelnikov--Shannon Theorem in dimension $1$. 
We recall that given 
 a reproducing kernel Hilbert space $\cK$ of holomorphic functions on
 a domain $\Omega$, a sequence $\Gamma=\{\gamma\}\subseteq\Omega$
 is a sampling sequence for $\cK$ if there exist constants
$A,B>0$ such that for all $F\in \cK$
$$
A \sum_{\gamma\in\Gamma} |F(\gamma)|^2 \|K_\gamma\|_\cK^{-1} \le
\|F\|_\cK^2 \le B \sum_{\gamma\in\Gamma} |F(\gamma)|^2 \|K_\gamma\|_\cK^{-1}
\,,
$$
where $K_\gamma$ denotes the reproducing kernel at $\gamma\in\Omega$. 

We now define the sequences for which we establish our sampling
theorem. For $b>0$ let $L_b \subseteq \bbC$ be the square lattice 
\begin{equation}\label{square-lattice-b}
L_b=\Big\{\gamma_{\ell, m}\in\bbC: \gamma_{\ell
  ,m}=\textstyle{\sqrt{\frac{2\pi}{b}}}(\ell+im),
(\ell,m)\in\bbZ^2\Big\}. 
\end{equation}

\begin{defn}\label{samp-seq}{\rm
For $(b_1,\ldots, b_n)\in\bbR_+^n$ and $a\in \bbR_+$, consider
the sequence of 
points $\Gamma \subseteq b\cU$ 
\begin{equation*}
\Gamma =\Big\{  \big( \gamma',
\textstyle{ \frac{\pi}{a}}k+\textstyle{ \frac{i}{4}} |\gamma'|^2
\big)\in \bbC^n\times\bbC:
\ \gamma'\in L_{(b_1,\dots,b_n)} :=L_{b_1}\times\cdots\times L_{b_n},\,
k\in\bbZ \Big\}.
\end{equation*}
}
\end{defn}

 Since  the lattice $\Gamma\subseteq b\cU$,  we can use Heisenberg
coordinates and observe that $\Gamma$ has a product structure;
precisely 
$$\Gamma=\big\{ [\gamma',k\pi/a]\in\bbH_n:\, \gamma' \in
L_{(b_1,\dots,b_n)},\, k\in\bbZ\big\}\, . 
$$ 
\begin{thm}\label{main-5}
Let $\Gamma \subseteq b\cU$ be as in Definition
\ref{samp-seq} and suppose $b_j>a$, $j=1,\ldots, n$. Then, there exist
constants $A_\Gamma ,B_\Gamma>0$ such that for all 
$F\in \PW^n_a$ we have 
$$
A_\Gamma \sum_{\gamma \in\Gamma} |\p_t^{n/2} F(\gamma)|^2 \le \| F\|_{\PW_a^n}^2 
\le B_\Gamma \sum_{\gamma \in\Gamma} |\p_t^{n/2} F(\gamma)|^2  ,
$$
where $t=\Re\z_{n+1}$.  As a consequence, for every $G\in\PW_a$ we
have the sampling 
\begin{equation*}
 A_\Gamma \sum_{\gamma \in\Gamma} |G(\gamma)|^2 
\le  \| G\|_{\PW_a}^2 \le B_\Gamma \sum_{\gamma \in\Gamma} |G(\gamma)|^2 .
\end{equation*}

 Conversely, if there exists $j_0\in\{1,\dots,n\}$ such that
$b_{j_0}\le a$, then the sequence $\Gamma$ fails to be sampling for
$\PW_a$.  
\end{thm}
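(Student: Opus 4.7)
The plan is to reduce the sampling problem on $\Gamma\subset b\cU$ to a combination of Parseval's identity for Fourier series in the central variable $t$ and the Seip--Wallsten sampling theorem in the Fock spaces $\cF^\mu(\bbC^n)$, $\mu\in(0,a)$. The Paley--Wiener representation of Theorem \ref{main-4} is the bridge that makes this reduction possible, and the fractional derivative $\p_t^{n/2}$ is precisely what balances the spectral weights that arise.

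For $F\in\PW_a^n$, I would write $\tau=\beta_\lambda(\widetilde F_0)\in\cH_n^2([-a,0))$ and use the range condition $\ran(\tau(\lambda))\subseteq\operatorname{span}\{1\}$ to express each $\tau(-\mu)$, $\mu\in(0,a)$, as the rank-one operator $\vp\mapsto\la\vp,g_\mu\ra_{\cF^{-\mu}}\,1$, with $\|g_\mu\|_{\cF^{-\mu}}=\|\tau(-\mu)\|_{\HS}$. Setting $G_\mu(z):=\ov{g_\mu(\bar z)}\in\cF^\mu(\bbC^n)$ and evaluating the trace in \eqref{PW-D-eq3} via the Bargmann formula \eqref{Barg-rep} together with the reproducing property of $\cF^{-\mu}$ at the origin, one obtains
\begin{equation*}
\widetilde F_h[z,t]=\frac{1}{(2\pi)^{n+1}}\int_0^a e^{-\mu h+i\mu t-\frac{\mu}{4}|z|^2}\,G_\mu(z)\,\mu^n\,d\mu,
\end{equation*}
and consequently $\|F\|_{\PW_a^n}^2=(2\pi)^{-n-1}\int_0^a\|G_\mu\|_{\cF^\mu}^2\,\mu^{2n}\,d\mu$. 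Applying $\p_t^{n/2}$, interpreted as the $\bbH_n$-Fourier multiplier by $(i\mu)^{n/2}$, inserts the factor $(i\mu)^{n/2}$ in the integrand.

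Next, fix $\gamma'\in L_{(b_1,\dots,b_n)}$ and recognize the numbers $\p_t^{n/2}F([\gamma',k\pi/a])$, $k\in\bbZ$, as rescaled Fourier coefficients on $(-a,a)$ of the function $\mu\mapsto(i\mu)^{n/2}e^{-\frac{\mu}{4}|\gamma'|^2}G_\mu(\gamma')\mu^n$, extended by zero to $(-a,0)$; since $\{e^{i\mu k\pi/a}/\sqrt{2a}\}_k$ is an orthonormal basis of $L^2(-a,a)$, Parseval yields
\begin{equation*}
\sum_{k\in\bbZ}\bigl|\p_t^{n/2}F([\gamma',k\pi/a])\bigr|^2=\frac{2a}{(2\pi)^{2n+2}}\int_0^a\mu^{3n}|G_\mu(\gamma')|^2\,e^{-\frac{\mu}{2}|\gamma'|^2}\,d\mu.
\end{equation*}
Summing over $\gamma'\in L_{(b_1,\dots,b_n)}$ and applying Fubini, the inner sum is estimated by the Seip--Wallsten theorem: the product lattice $L_{(b_1,\dots,b_n)}$ is sampling in $\cF^\mu(\bbC^n)$ iff $b_j>\mu$ for every $j$, so the strict gap $\min_j b_j>a$ yields constants $A,B>0$ uniform in $\mu\in(0,a)$ with
\begin{equation*}
A\|G_\mu\|_{\cF^\mu}^2\le\Bigl(\frac{\mu}{2\pi}\Bigr)^n\sum_{\gamma'\in L_{(b_1,\dots,b_n)}}|G_\mu(\gamma')|^2\,e^{-\frac{\mu}{2}|\gamma'|^2}\le B\|G_\mu\|_{\cF^\mu}^2.
\end{equation*}
After substitution, the residual weight $\mu^{3n}(2\pi/\mu)^n=(2\pi)^n\mu^{2n}$ reproduces the spectral weight $\mu^{2n}$ appearing in $\|F\|_{\PW_a^n}^2$, which delivers the sampling inequalities for $\PW_a^n$.

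For the corollary, given $G\in\PW_a$ with $\sigma=\beta_\lambda(\widetilde G_0)\in\cH^2([-a,0))$, I would set $\tau_F(\lambda):=(-i\lambda)^{-n/2}\sigma(\lambda)$; since $\int|\tau_F|^2|\lambda|^{2n}\,d\lambda=\int|\sigma|^2|\lambda|^n\,d\lambda<\infty$, $\tau_F$ belongs to $\cH_n^2([-a,0))$, and the $F\in\PW_a^n$ produced by Theorem \ref{main-4} satisfies $\p_t^{n/2}F=G$ on $b\cU$ and $\|F\|_{\PW_a^n}=\|G\|_{\PW_a}$, so the $\PW_a^n$-sampling transfers to $\PW_a$. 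For the converse, if $b_{j_0}\le a$, the negative direction of the Seip--Wallsten theorem (in its sharp, critical-density form) furnishes, for each $\eps>0$, a unit vector $g_\eps\in\cF^\mu(\bbC)$ with some $\mu\in[b_{j_0},a]$ and $\sum_{\gamma\in L_{b_{j_0}}}|g_\eps(\gamma)|^2e^{-\mu|\gamma|^2/2}<\eps$; transplanting this failure through the representation derived above, by concentrating $\sigma$ on a narrow window around $\mu^*\in[b_{j_0},a]$ and choosing the $j_0$-th tensor factor of $G_{\mu^*}$ equal to $g_\eps$, one produces functions in $\PW_a$ violating the lower sampling bound. The hard part is to secure uniformity of the Fock sampling constants as $\mu\uparrow a$ and to treat the critical case $b_{j_0}=a$ in the converse; both issues rest on the sharp density characterization of Seip--Wallsten.
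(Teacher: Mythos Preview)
Your overall architecture matches the paper's exactly: unpack $F\in\PW_a^n$ via the Paley--Wiener representation into a family $\{G_\mu\}_{\mu\in(0,a)}$ of Fock functions, use the classical WKS theorem in the variable $t$ to handle the sum over $k\in\bbZ$, and then apply a Fock sampling inequality in the $\z'$-variable, uniformly in $\mu$. The transfer from $\PW_a^n$ to $\PW_a$ via $G=\p_t^{n/2}F$ is also the paper's device. The integral representation and the norm identity you derive are correct and are equivalent to the paper's Theorem~\ref{key2}.

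There is, however, one genuine gap, and it concerns the step you yourself flag as ``the hard part''. You assert that Seip--Wallsten yields constants $A,B>0$ \emph{uniform in $\mu\in(0,a)$} for the inequality
\[
A\|G_\mu\|_{\cF^\mu}^2\le\Bigl(\frac{\mu}{2\pi}\Bigr)^n\sum_{\gamma'\in L_{(b_1,\dots,b_n)}}|G_\mu(\gamma')|^2e^{-\frac{\mu}{2}|\gamma'|^2}\le B\|G_\mu\|_{\cF^\mu}^2,
\]
and you locate the difficulty at $\mu\uparrow a$. That is the wrong end. Because $\min_j b_j>a$ strictly, the lattice $L_{(b_1,\dots,b_n)}$ stays uniformly supercritical on any interval $[\delta,a]$, and the Seip--Wallsten argument (tracked through its constants) does give uniform bounds there; this is the easy Proposition~\ref{referee-prop} in the paper. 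The genuine difficulty is $\mu\to 0^+$: after rescaling to $\cF^1$, the effective lattice is $\sqrt{\mu}\,L_{b_j}$, which becomes arbitrarily dense, and the factor $(\mu/2\pi)^n$ must be shown to exactly compensate the growth of the point count. This does \emph{not} follow from Seip--Wallsten and is the content of the paper's Theorem~\ref{new-lem} (and its $n$-dimensional Corollary~\ref{n-dim-version}), whose proof requires the Weierstrass $\sigma$-function expansion \eqref{decom-sigma-fct:eq} together with a Schur test whose constant is tracked to be independent of $\lambda$. Without this, the upper sampling bound for $\PW_a^n$ is unproven.

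For the converse you sketch a direct transplant of a Fock counterexample by concentrating the spectrum near some $\mu^*\in[b_{j_0},a]$. This is the same idea as the paper's Theorem~\ref{nec-cond-thm}, which formalizes it as an approximation: one takes $F_\eps(\z',\z_{n+1})=f(\z')g_\eps(\z_{n+1})$ with $|\widehat{g_\eps}|^2\to\delta_a$ and passes to the limit using continuity of $\lambda\mapsto\|f\|_{\cF^{|\lambda|}}$ and density of $\bigcup_\eps\cF^{a-\eps}$ in $\cF^a$. Your outline would need exactly this limiting step to treat the endpoint $\mu^*=a$ (the critical case $b_{j_0}=a$), since a $\PW_a$ function cannot carry spectral mass at the single point $\lambda=-a$.
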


 We point out that Theorem \ref{main-5} most likely could be generalized to
 more general sequences of points, see the comments in the last
Section \ref{Final-remarks}.

We emphasize here one peculiar difference between the 1-dimensional
and the several variables settings: in the 1-dimensional setting the
Fourier transform of a function in the fractional Paley--Wiener space $PW^s_a$ is supported on a
symmetric interval $[-a,a]$ (\cite[Theorem 1]{MPS-frac}), whereas in
several variables the non-commutative Fourier
transform of a function in $\PW^s_a$ is supported on an interval of the form $[-a,0]$.  
In fact, the upper half-plane
$U=\{\z=x+iy\in\bbC:y>0\}$ and its complement
$U^c=\{\z=x+iy\in\bbC:y\leq 0\}$ have the same geometry. However, our
result is modeled on the Siegel upper-half space $\cU$ and the
geometries of $\cU$ and $\cU^c$ are clearly different. We will see (Lemma \ref{P-L-Siegel}) that a function $F\in \PW^s_a$ is bounded
 in $\cU$, whereas grows exponentially in $\cU^c$.

 The classical Paley--Wiener and Whittaker--Kotelnikov--Shannon
 Theorems have a natural and straightforward extension in several variables at least when we consider entire functions in $\bbC^d$ of exponential type
with respect to the cube $[-a,a]^d$ and the sampling on the lattice $\frac \pi a\bbZ^d$. More generally, it is possible to consider entire functions of exponential type with respect to a symmetric body $K$. A symmetric body
is a convex, compact and symmetric subset of $\bbR^d$ with
non-empty interior. Then, if $f\in \Hol(\bbC^d)$ and $f|_{\bbR^d}\in L^2$, $f$ is of exponential type with respect to $K$ if and only if the Fourier transform of $f|_{\bbR^{d}}$ is supported on
$K$, see \cite[Chapter III]{Stein-Weiss}.
Notice that in this case $\bbR^d$
 is  a totally  real
submanifold  of real co-dimension $d$. Sampling theorems for entire functions in several variables in more general
sets than a dilation of the lattice $\bbZ^d$ have drawn
considerable interest in recent times. In \cite{OU} the authors
discuss the sampling for the Paley--Wiener space of entire functions
in several variables with convex spectrum. In  \cite{GL} the authors
obtain very interesting results concerning some necessary and some
sufficient condition for sampling in Fock spaces in $\bbC^2$ in
connection with  existence of Gabor frames in $\bbR^2$.  In \cite{GHOR}
the authors
prove strict density inequalities for sampling and interpolation in
Fock spaces in $\bbC^d$
defined by a plurisubharmonic weight.
See also \cite{GJM} for related results, and the references in the cited papers.

The paper is organized as follows. In Section \ref{Basic-facts-sec}
we recall some facts about representation theory and 
the fractional Laplacian on the Heisenberg
group. In Section  \ref{sec:3} 
we prove a Plancherel--P\'olya type inequality and 
the Paley--Wiener type result, Theorem \ref{main-4}.  We also
 compute the reproducing
kernels for the spaces $\PW_a^s$, $0\le s<n+1$ and show  in
Section \ref{sec:3} a resemble  between the classical Paley--Wiener space
$PW_a(\bbC)$ in one variable and the fractional space $\PW_a^n$.
In Section \ref{sec:4} we prove a representation theorem for functions
in $\PW^s_a$ that we shall use in the proof of our sampling theorem,
but that we believe is of interest in its own.   Section \ref{sec:5}
is devoted to a careful estimate of the sampling constants for the
Fock space $\cF^\lambda$ when $\lambda
$ varies in the bounded interval $(0,a]$.   In Section
\ref{sec:6} we prove our main result and we conclude with some final remarks and
open questions in Section \ref{Final-remarks}. 
\ms

\section{Preliminaries and basic facts}
\label{Basic-facts-sec}  

\subsection{The Sobolev space $\dot{W}^{s,p}$ and the Fourier transform}
We consider the fractional operator $\Delta^{s/2}$ defined following
\cite{Komatsu, Folland-Arkiv}
as
\begin{equation}\label{Delta-s2-def}
\Delta^{s/2} \vp
= \lim_{\eps\to0}\frac1{\Gamma(k-\frac s2)}\int_\eps^\infty r^{k-\frac s2 -1} e^{-r\Delta}
\Delta^k \vp\, dr \,,
\end{equation}
where $k>s/2$ is an integer, 
whose domain is the set of $\vp\in L^p(\bbH_n)$ for which the limit
exists in $L^p$. Then $\Delta^{s/2}$ is a closed operator on $L^p$,
$1<p<\infty$ and its domain contains the Schwartz space $\cS$, see 
\cite[Thm. (3.15)]{Folland-Arkiv}.  
When $0<s<2n+2$, the operator $\Delta^{s/2}$ has an inverse given by
convolution with a locally integrable homogeneous function.  We denote
such convolution operator by $\cI_s$.  The following result in
contained in \cite[(1.11), (3.18)] {Folland-Arkiv}. Recall that the
homogeneous Sobolev spaces $\dot{W}^{s,p}$ is the completion  of
$\cS$ with respect to the norm $\|\Delta^{s/2}\vp\|_{L^p}$.
\begin{prop}\label{Folland-Riesz-pot}
  Let $1<p<\infty$, $0<s<(2n+2)/p$ and $p^*$ given by $\frac{1}{p^*}=
\frac1p - \frac{s}{2n+2}$.  Then, $\cI_s:L^p\to L^{p^*}$ is bounded.
Therefore, if $1<p<\infty$, $0<s<(2n+2)/p$,
$$
\dot{W}^{s,p} = \big\{ f\in L^{p^*}:\,  \Delta^{s/2} f \in L^p,\, \|
f\|_{\dot{W}^{s,p}}:=\| \Delta^{s/2} f\|_{L^p} \big\}\,.
$$
  \end{prop}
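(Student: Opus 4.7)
The boundedness of $\cI_s: L^p \to L^{p^*}$ is the Hardy--Littlewood--Sobolev inequality on $\bbH_n$, which the authors simply cite from Folland's paper; so the real work is to identify the abstract completion $\dot W^{s,p}$ with the concrete subspace of $L^{p^*}$. The plan is to construct an explicit isometric identification using $\cI_s$ as a ``left inverse'' of $\Delta^{s/2}$, which is available on $\cS$ by the inversion formula contained in the same cited results from \cite{Folland-Arkiv}.

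First I would use the inversion formula $\cI_s\Delta^{s/2}\vp = \vp$ on $\cS$ together with the boundedness of $\cI_s$ to get, for any Cauchy sequence $\{\vp_k\}\subseteq\cS$ in the seminorm $\|\Delta^{s/2}\cdot\|_{L^p}$, the estimate
\[
\|\vp_j - \vp_k\|_{L^{p^*}} = \|\cI_s\Delta^{s/2}(\vp_j-\vp_k)\|_{L^{p^*}} \lesssim \|\Delta^{s/2}(\vp_j-\vp_k)\|_{L^p}.
\]
Consequently $\vp_k$ converges in $L^{p^*}$ to some $f$, and $\Delta^{s/2}\vp_k$ converges in $L^p$ to some $g$, with $f = \cI_s g$ (in the sense of $L^{p^*}$). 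This produces a well-defined linear map $\iota:\dot W^{s,p}\to L^{p^*}$ sending $[\{\vp_k\}]\mapsto f$, which is independent of the representative (equivalent sequences have the same $g$, hence the same $f$) and injective (if $f=0$ then $g = \Delta^{s/2}\cI_s g = 0$ by the inversion formula, so the $\dot W^{s,p}$ norm vanishes). The norm identity $\|[\{\vp_k\}]\|_{\dot W^{s,p}} = \|g\|_{L^p} = \|\Delta^{s/2}f\|_{L^p}$ is built into the construction, once one verifies that the limit in \eqref{Delta-s2-def} applied to $f=\cI_s g$ returns $g$; this is the content of the pairing between $\cI_s$ and $\Delta^{s/2}$ already established in \cite{Folland-Arkiv}.

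The only nontrivial part is the surjectivity of $\iota$ onto $\{f\in L^{p^*}: \Delta^{s/2}f\in L^p\}$. Suppose $f\in L^{p^*}$ with $\Delta^{s/2}f=g\in L^p$; since $\Delta^{s/2}\cI_s g = g$ and $\Delta^{s/2}$ is injective on its natural domain, we must have $f=\cI_s g$. To realize $f$ as $\iota$ of an abstract class, approximate $g$ in $L^p$ by $g_k\in\cS$; then $\cI_s g_k\to f$ in $L^{p^*}$ and $\Delta^{s/2}(\cI_s g_k)=g_k\to g$ in $L^p$. The remaining difficulty is that $\cI_s g_k$ is typically \emph{not} Schwartz (the Riesz kernel destroys decay), so a diagonalization is needed: for each $k$, convolving $\cI_s g_k$ with a Schwartz approximate identity $\eta_\tau$ and then multiplying by a smooth cutoff $\chi_R$ produces $\vp_{k,\tau,R}\in\cS$, and using that $\Delta^{s/2}$ commutes with convolution by $\eta_\tau$ together with $\eta_\tau * g_k\to g_k$ in $L^p$ as $\tau\to 0$, one selects a diagonal subsequence $\vp_k\in\cS$ with $\Delta^{s/2}\vp_k\to g$ in $L^p$. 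Then $[\{\vp_k\}]\in\dot W^{s,p}$ and $\iota([\{\vp_k\}]) = f$, completing the identification. The main obstacle will be this last density step; the Sobolev embedding provided by the boundedness of $\cI_s$ keeps the argument under control, because it guarantees the Schwartz approximations automatically converge in $L^{p^*}$ once they converge in the $\dot W^{s,p}$ seminorm.
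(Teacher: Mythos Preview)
The paper does not prove this proposition at all: the sentence immediately preceding it reads ``The following result is contained in \cite[(1.11), (3.18)]{Folland-Arkiv},'' and no argument is given. So there is nothing to compare your approach against; you have supplied a proof where the authors simply invoke Folland.

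Your outline is sound and is the standard way one unpacks such a citation. The map $\iota$ is well defined and isometric by the inversion identity $\cI_s\Delta^{s/2}\vp=\vp$ on $\cS$ together with the boundedness $\cI_s:L^p\to L^{p^*}$, exactly as you say. The one place that would need more detail in a fully written proof is your diagonalization step for surjectivity: multiplying $\eta_\tau*\cI_s g_k$ by a cutoff $\chi_R$ forces you to control the commutator $[\Delta^{s/2},\chi_R]$ in $L^p$, and you have not said how. This is not a gap in the strategy --- the commutator does go to zero as $R\to\infty$ by standard kernel estimates for $\Delta^{s/2}$ on stratified groups --- but it is the only genuinely nontrivial verification left, and your final sentence slightly misdirects by suggesting the Sobolev embedding alone handles it (the embedding controls $L^{p^*}$ convergence of the $\vp_k$, not $L^p$ convergence of $\Delta^{s/2}\vp_k$). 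An alternative that avoids cutoffs entirely is to approximate $g$ in $L^p$ by functions in the subspace $\cS_0\subseteq\cS$ of Schwartz functions whose group Fourier transform vanishes near $\lambda=0$; on $\cS_0$ the Riesz potential $\cI_s$ does map back into $\cS$, and density of $\cS_0$ in $L^p$ is available from the spectral calculus on $\bbH_n$.
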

 In particular, we emphasize that if $f\in \dot W^{s,p}$, then there exists a sequence $\{\varphi_k\}_k\subseteq \mathcal S$ such that $\varphi_k\to f$ in $L^{p^*}$ 
 and $\{\Delta^{s/2} \varphi_k\}_{k}$ 
admits a limit in $L^p$. The fractional Laplacian $\Delta^{s/2} f$ of $f$ is set by definition to be such a limit.

Our goal now is to extend the definition of the Fourier transform to
$\dot{W}^{s,2}$ when $0<s<n+1$.  
The differentials of the Bargmann representations, that are
defined in \eqref{Barg-rep} , can be computed 
to give:
\begin{itemize}
\item[(i)] for all $\lambda\neq0$, $d\beta_\lambda(T) = i\lambda$;\smallskip
\item[(ii)] for $\lambda>0$,  $d\beta_\lambda(Z_j) = \p_{w_j} $, $d\beta_\lambda(\ov Z_j) =
  -\frac\lambda2 w_j$;\smallskip  
\item[(iii)]  for $\lambda<0$,  $d\beta_\lambda (Z_j) = \frac\lambda2 w_j$, and 
  $d\beta_\lambda(\ov Z_j) = \p_{w_j} $;
\end{itemize}
see \cite{Folland-phase} or \cite{Thangavelu}.  It is important to recall that, with our choice
of normalization of the Fourier transform, if $f,g\in L^1(\bbH_n)$,
 $\beta_\lambda(f*g) = 
\beta_\lambda(f)\beta_\lambda(g)$, so that for any right-invariant vector field $D$ 
\begin{equation}\label{id-1}
\beta_\lambda(D f)= -d\beta_\lambda(D) \beta_\lambda(f) \,.
\end{equation}
Since $\Delta = -\frac2n \sum_{j=1}^n (Z_j \ov Z_j + \ov Z_j Z_j) $, we
obtain that $d\beta_\lambda(\Delta)$ is the operator on $\cF^\lambda$
such that
$
d\beta_\lambda(\Delta) e_\alpha = |\lambda|(1+|\alpha|/n)e_\alpha$.
In particular, $d\beta_\lambda(\Delta)$ is a diagonal operator on
$\cF^\lambda$ with respect to the
standard basis $\{e_\alpha\}$. 
Therefore, from \eqref{Delta-s2-def} it follows that 
\begin{align}\label{id-2}
  d\beta_\lambda(\Delta^{s/2})  e_\alpha
  & =\frac1{\Gamma(k-\frac s2)}\int_0^\infty r^{k-\frac s2 -1}
  d\beta_\lambda\big( e^{-r\Delta} \Delta^k\big) e_\alpha \, dr
    \notag  \\
   & =\frac1{\Gamma(k-\frac s2)}\int_0^\infty r^{k-\frac s2 -1}
     e^{-r d  \beta_\lambda(\Delta)}
  d   \beta_\lambda(\Delta^k) e_\alpha \, dr \notag\\
  & =\frac1{\Gamma(k-\frac s2)}\int_0^\infty r^{k-\frac s2 -1} e^{-r|\lambda|(1+|\alpha|/n))}
|\lambda|^k(1+|\alpha|/n)^ke_\alpha\, dr \notag\\
&    = \big[
|\lambda|(1+|\alpha|/n)\big]^{s/2} e_\alpha \,.
\end{align}

\begin{lem}\label{F-T-Ws}
Let $0\le s<n+1$.  Then, the Fourier transform on $\bbH_n$ defines a
bounded operator $\beta: \dot{W}^{s,2}\to \cL^2_s$.
\end{lem}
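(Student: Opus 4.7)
The plan is to establish an intertwining identity
\[
\beta_\lambda(\Delta^{s/2}\vp) \,=\, d\beta_\lambda(\Delta^{s/2})\,\beta_\lambda(\vp), \qquad \vp\in\cS,
\]
and to combine it with the explicit diagonalization of $d\beta_\lambda(\Delta^{s/2})$ given in \eqref{id-2}. Together with Plancherel \eqref{Plancherel} this yields the desired inequality on the dense subspace $\cS\subseteq \dot W^{s,2}$, and the lemma follows by extension.

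First I would verify the intertwining identity. Starting from the integral representation \eqref{Delta-s2-def}, I apply $\beta_\lambda$ under the integral sign. Two ingredients enter: the heat operator $e^{-r\Delta}$ is convolution with the heat kernel $p_r$, and the convolution convention $\beta_\lambda(f*g)=\beta_\lambda(f)\beta_\lambda(g)$ together with the standard identity $\beta_\lambda(p_r)=e^{-r\,d\beta_\lambda(\Delta)}$ gives the transform of $e^{-r\Delta}g$; iterated application of \eqref{id-1} to $\Delta=-\frac{2}{n}\sum(Z_j\ov Z_j+\ov Z_j Z_j)$ (an even-order expression in the right-invariant fields, so the two sign changes cancel) gives $\beta_\lambda(\Delta^k g)=d\beta_\lambda(\Delta)^k\beta_\lambda(g)$ for $g\in\cS$. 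Interchange of $\beta_\lambda$ with the limit/integral in \eqref{Delta-s2-def} is then routine by dominated convergence, and yields the operator-valued analogue of \eqref{Delta-s2-def} applied to $\beta_\lambda(\vp)$, which by the same computation carried out in \eqref{id-2} equals $d\beta_\lambda(\Delta^{s/2})\beta_\lambda(\vp)$.

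Next, from \eqref{id-2} the operator $d\beta_\lambda(\Delta^{s/2})$ is diagonal in the standard basis of $\cF^\lambda$ with eigenvalues $\mu_\alpha(\lambda)=[|\lambda|(1+|\alpha|/n)]^{s/2}$; since $s\ge 0$, each $\mu_\alpha(\lambda)\ge |\lambda|^{s/2}$. Writing the matrix of an arbitrary Hilbert--Schmidt operator $T$ on $\cF^\lambda$ in that basis gives at once
\[
\|d\beta_\lambda(\Delta^{s/2})T\|_{\HS}^2 \,=\, \sum_{\alpha,\beta}\mu_\alpha(\lambda)^2\,|\langle Te_\beta,e_\alpha\rangle|^2 \,\ge\, |\lambda|^s\,\|T\|_{\HS}^2.
\]
Applied to $T=\beta_\lambda(\vp)$ and combined with the intertwining identity this yields the pointwise-in-$\lambda$ bound $|\lambda|^s\|\beta_\lambda(\vp)\|_{\HS}^2\le \|\beta_\lambda(\Delta^{s/2}\vp)\|_{\HS}^2$. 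Multiplying by $(2\pi)^{-(n+1)}|\lambda|^n$, integrating over $\bbR^*$, and invoking Plancherel for $\Delta^{s/2}\vp\in L^2(\bbH_n)$ (whose existence is guaranteed by $\cS\subseteq \mathrm{Dom}(\Delta^{s/2})$) gives
\[
\|\beta(\vp)\|_{\cL^2_s}^2 \,\le\, \|\Delta^{s/2}\vp\|_{L^2}^2 \,=\, \|\vp\|_{\dot W^{s,2}}^2, \qquad \vp\in\cS.
\]

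Thus $\beta:(\cS,\|\cdot\|_{\dot W^{s,2}})\to \cL^2_s$ is a contraction; by density of $\cS$ in $\dot W^{s,2}$ it extends uniquely to a bounded operator $\beta:\dot W^{s,2}\to \cL^2_s$, and the contraction estimate also shows that the extension is well defined on the equivalence classes of Cauchy sequences in Definition \ref{dotWps-def}. The only delicate point is the justification of the intertwining in Step 1, which requires commuting $\beta_\lambda$ with the Komatsu-type integral \eqref{Delta-s2-def}; once this is done, the eigenvalue bound and Plancherel are immediate.
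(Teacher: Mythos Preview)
Your proof is correct and follows essentially the same approach as the paper: both use the intertwining relation $\beta_\lambda(\Delta^{s/2}\vp)=d\beta_\lambda(\Delta^{s/2})\beta_\lambda(\vp)$ for $\vp\in\cS$, the explicit diagonalization \eqref{id-2}, and the elementary eigenvalue bound $(1+|\alpha|/n)^{s/2}\ge 1$ to get the pointwise-in-$\lambda$ Hilbert--Schmidt inequality, then Plancherel and density. The only cosmetic difference is that the paper writes the relation in inverted form, $\beta_\lambda(\vp)=|\lambda|^{-s/2}M_\lambda\beta_\lambda(\Delta^{s/2}\vp)$ with the contraction $M_\lambda e_\alpha=(1+|\alpha|/n)^{-s/2}e_\alpha$, and you instead bound $\|d\beta_\lambda(\Delta^{s/2})T\|_{\HS}$ from below; your version also spells out the justification of the intertwining identity that the paper simply cites from \eqref{id-1}--\eqref{id-2}.
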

\proof
If $\vp\in\cS$, then $\vp,\Delta^{s/2}\vp \in L^2$, so that both
$\beta_\lambda (\vp),\beta_\lambda(\Delta^{s/2}\vp) \in \HS(\cF^\lambda)$. The
identities \eqref{id-1} and \eqref{id-2} now give that
\begin{equation}\label{F-T-Ws-eq}
\beta_\lambda(\vp)
= |\lambda|^{-s/2} M_\lambda \beta_\lambda(\Delta^{s/2}\vp) ,
\end{equation}
where $M_\lambda$ is the bounded operator on $\cF^\lambda$ such that 
$M_\lambda(e_\alpha) = (1+|\alpha|/n)^{-s/2} e_\alpha$.
Then, 
\begin{align*}
\| \beta(\vp)\|_ {\cL^2_s}^2
  & = \int_{\bbR^*} \|  \beta_\lambda(\vp) \|_{\HS}^2\,
    |\lambda|^{n+s}d\lambda \\
& = \int_{\bbR^*} \| M_\lambda \beta_\lambda(\Delta^{s/2}\vp) \|_{\HS}^2\,
|\lambda|^n d\lambda\\
  & \le  \int_{\bbR^*} \| \beta_\lambda(\Delta^{s/2}\vp) \|_{\HS}^2\,
    |\lambda|^n d\lambda \\
  & = \| \vp\|_{\dot{W}^{s,2}}^2
\,. \qed
\end{align*}

Following \cite{R} and using the above
differentials (i-iii) it is possible to  see how the holomorphicity
forces some constraints on the support of the Fourier 
transform. In particular the following lemma holds, see also \cite[Section
2.3]{AMPS} and \cite{CP}.
\begin{lem}\label{support-holomorphic-fcts}
Let $c\in\bbR$ and set
$$
\cU_c = \Big\{ \z=(\z',\z_{n+1}):
\Im\z_{n+1}>\textstyle{\frac{1}{4}}|\z'|^2+c\Big\}.
$$
Let $F\in\Hol(\cU_c)$,  
$\widetilde F=F\circ\Psi^{-1}$ and set $\widetilde F_h[z,t]=\widetilde
F(z,t,h)$. If $h>c$ and $\widetilde F_h\in L^2(\bbH_n)$, then
$\beta_\lambda  
(\widetilde F_h)=0$ for $\lambda>0$ and $\ran \big( \beta_\lambda  
(\widetilde F_h)\big) \subseteq \operatorname{span}\{e_0\}$.
\end{lem}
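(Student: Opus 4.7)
The plan is to translate the Cauchy--Riemann equations for $F$ on $\cU_c$ into a tangential CR equation for $\widetilde F$ on each horizontal slice $\bbH_n\times\{h\}$, and then to interpret that equation spectrally by applying the group Fourier transform $\beta_\lambda$. First, a chain rule through $\Psi$ in the coordinates $(z,\ov z,t,h)$ gives $\p/\p\ov\z_{n+1}=\tfrac12(\p_t+i\p_h)$ and $\p/\p\ov\z_j=\p_{\ov z_j}-\tfrac{z_j}{4}\p_h$. Holomorphicity of $F$ therefore forces $\p_h\widetilde F=i\p_t\widetilde F$ and, substituting in the complex tangential directions,
\[
\ov Z_j^R\widetilde F_h:=\bigl(\p_{\ov z_j}-\tfrac{iz_j}{4}\p_t\bigr)\widetilde F_h=0,\qquad j=1,\dots,n,
\]
on every slice $h>c$, where $\ov Z_j^R$ denotes the right-invariant antiholomorphic CR vector field on $\bbH_n$ (sharing with $\ov Z_j$ the value $\p_{\ov z_j}$ at the identity).

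Next I would apply $\beta_\lambda$ to this equation. Using \eqref{id-1} for the right-invariant field $\ov Z_j^R$, the CR equation translates to the operator identity $d\beta_\lambda(\ov Z_j)\,\beta_\lambda(\widetilde F_h)=0$ on $\cF^\lambda$, and the case split is then dictated by (ii)--(iii). For $\lambda>0$ one has $d\beta_\lambda(\ov Z_j)=-\tfrac\lambda2 w_j$: multiplication by the coordinate $w_j$ has trivial kernel on the space of entire functions $\cF^\lambda$, so $\beta_\lambda(\widetilde F_h)G=0$ for every $G\in\cF^\lambda$, whence $\beta_\lambda(\widetilde F_h)=0$. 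For $\lambda<0$ one has $d\beta_\lambda(\ov Z_j)=\p_{w_j}$: each $\beta_\lambda(\widetilde F_h)G$ is annihilated by every $\p_{w_j}$, hence is constant in $w$, giving $\ran\beta_\lambda(\widetilde F_h)\subseteq \operatorname{span}\{e_0\}$.

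The main obstacle is the rigorous application of $\beta_\lambda$ to $\ov Z_j^R\widetilde F_h$, since \emph{a priori} we only know $\widetilde F_h\in L^2(\bbH_n)$ and \eqref{id-1} was stated for sufficiently regular $f$. I would handle this by mollification: choose a Dirac sequence $\{\psi_k\}\subseteq C_c^\infty(\bbH_n)$ and set $f_k=\widetilde F_h*\psi_k$. Right-invariance of $\ov Z_j^R$ makes it commute with right convolution, so $\ov Z_j^R f_k=(\ov Z_j^R\widetilde F_h)*\psi_k=0$, while $f_k\to\widetilde F_h$ in $L^2$ and, by Plancherel, $\beta_\lambda(f_k)\to\beta_\lambda(\widetilde F_h)$ in $\HS(\cF^\lambda)$ for a.e.\ $\lambda$. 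Applying \eqref{id-1} legitimately to each smooth $f_k$ and passing to the limit yields the desired operator identity, from which both conclusions of the lemma follow.
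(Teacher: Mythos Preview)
Your approach is exactly the one the paper indicates: it does not give a proof of this lemma but explicitly says the result follows by translating the holomorphicity of $F$ through the differentials (i)--(iii) of the Bargmann representation, referring to \cite{R}, \cite[Section~2.3]{AMPS}, and \cite{CP}. Your chain-rule computation, the identification of $\ov Z_j^R=\p_{\ov z_j}-\tfrac{iz_j}{4}\p_t$ as the right-invariant CR field, and the case analysis via $d\beta_\lambda(\ov Z_j)$ are all correct and constitute the standard argument.

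One small technical point is worth tightening. Your mollification $f_k=\widetilde F_h*\psi_k$ does not actually buy you anything for applying \eqref{id-1}: $\widetilde F_h$ is already $C^\infty$ (it is the restriction of a holomorphic function), so $f_k$ has no more regularity than $\widetilde F_h$ itself, and neither is in $L^1$ or in $\cS$. What you need is decay, not smoothness. The clean fix is to use smooth cutoffs instead: with $\chi_R\in C_c^\infty(\bbH_n)$, $\chi_R\equiv 1$ on the ball of radius $R$, the function $\chi_R\widetilde F_h$ lies in $C_c^\infty\subset\cS$, so \eqref{id-1} applies directly, and
\[
\ov Z_j^R(\chi_R\widetilde F_h)=(\ov Z_j^R\chi_R)\,\widetilde F_h \xrightarrow{L^2} 0
\]
as $R\to\infty$, since $\ov Z_j^R\chi_R$ is uniformly bounded and supported in the annulus $\{R\le|[z,t]|\le 2R\}$. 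Passing to matrix coefficients $\langle\beta_\lambda(\,\cdot\,)e_\alpha,e_\gamma\rangle$ then yields $d\beta_\lambda(\ov Z_j)\beta_\lambda(\widetilde F_h)=0$ exactly as you want. With this adjustment the argument is complete.
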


\begin{remark}\label{remark-OV}{\rm
    As a consequence of the lemma, if 
$$
\cV = \Big\{ \z\in\bbC^{n+1}:\,
\textstyle{\frac{|\z'|^2}{4}}+c-\delta<\Im \z_{n+1}<
\textstyle{\frac{|\z'|^2}{4}}+c+\delta \Big\}
$$
is a neighborhood of $b\cU_c$, $F\in\Hol(\cV)$ and $\widetilde
F_t\in L^2(\bbH_n)$ for $t\in(c-\delta,c+\delta)$, then
\begin{equation*}
\sigma_{\lambda}(\widetilde F_{c+h})=e^{\lambda h}\sigma_{\lambda}(\widetilde F_c)
\end{equation*}
for any $h\in(-\delta,\delta)$, see e.g. \cite{AMPS,MM,OV}.}
\end{remark}

We now recall that  
the operators $i^{-1}T$ and $\Delta$ admit commuting self-adjoint
extensions on $L^2(\bbH_n)$, see \cite{Strichartz91} or
 \cite[Ch. 2]{Thangavelu-book}.
If $F\in\Hol(\cV)$, where $\cV$ is a tubular neighborhood
of $b\cU$, and $\widetilde F_0\in L^2(\bbH_n)$,  then $(\Delta+iT)\widetilde F_0=0$, that is,
$\Delta\widetilde F_0= -iT\widetilde F_0$.  Therefore,
\begin{equation}\label{spec-id-1}
  \Delta^{s/2}\widetilde F_0= (-iT)^{s/2}\widetilde F_0
\end{equation}
for all such
$F$'s and $s>0$. 
Hence,
\begin{equation}\label{spec-id-2}
 \beta_\lambda(\Delta^{s/2} \widetilde F_0) = \beta_\lambda((-iT)^{s/2}
 \widetilde F_0)=|\lambda|^{s/2}\beta_\lambda(\widetilde F_0)\,.  
\end{equation}

\subsection{Fock spaces, lattices and Weierstrass $\sigma$-functions}
We now
recall some facts on lattices and the associated Weierstrass
$\sigma$-functions.

For  $b>0$, we let $L_b$ to be the square lattice 
\begin{equation}\label{square-lattice-lambda}
L_b=\big\{ 
 \gamma_{\ell m}\in\bbC:\, \gamma_{\ell m} = \textstyle{
   \sqrt{\frac{2\pi}{b}} }(\ell+im),\, (\ell,m)\in\bbZ^2\big\},
\end{equation}
For such a lattice $L_b$ we consider the
Weierstrass $\sigma$-function associated to $L_b$, 
\begin{equation*}
\sigma_{L_b} (z) = z \prod_{(\ell,m)\in\bbZ^2\setminus(0,0)}
\bigg( 1-\frac{z}{\gamma_{\ell m}}\bigg) \exp\bigg\{ 
  \frac{z}{\gamma_{\ell m}} + \frac{z^2}{2\gamma_{\ell m}^2} \bigg\}.
\end{equation*}

 We recall a few well-known properties of the  the
Weierstrass $\sigma$-function $
\sigma_{L_b}$ for any $b>0$, see e.g. \cite[Ch.1]{Zhu}:
\begin{itemize}
\item[(i)]  $\sigma_{L_b}$ is an entire function of order $2$ and   type $\frac b4$  that vanishes exactly
at the points of $L_b$;
\item[(ii)] for all $z\in\bbC$, $|\sigma_{L_b}(z)|e^{-\frac
    b4 |z|^2}$ is   
  double periodic with periods $\sqrt{2\pi/b}$ and $i\sqrt{2\pi/b}$
 and it is
  bounded above and below   by  constants $C_b, c_b$ resp., depending
  only on $b$ 
   times $d(z,L_b)$, the euclidean distance of $z$ from the lattice $L_b$,  for
  all $z\in\bbC$;  
\item[(iii)] there exists a constant $c_b'>0$ depending only
  on $b$, such that  
  for all $\gamma_{\ell m}\in L_b$,
\begin{equation}\label{sigma'-estimate}
|\sigma_{L_b}'(\gamma_{\ell m})|e^{-\frac b4 |\gamma_{\ell
      m}|^2} \ge c_b'.
    \end{equation}
\end{itemize}

We also recall that given the lattice $L_b$, then for any
any $f\in \cF^{b'}$ with $b'<b$ we have the decomposition
\begin{equation}\label{decom-sigma-fct:eq}
f(z) = \sum_{\gamma_{\ell m}\in L_b} \frac{f(\gamma_{\ell m})}{\sigma_{L_b}'
  (\gamma_{\ell m}) } \frac{\sigma_{L_b} (z)}{z-\gamma_{\ell m}} 
\end{equation}
 where the series converges in $\Hol(\bbC)$, see
 e.g. \cite[Proposition 4.24]{Zhu}.
\ms

\section{The Plancherel--P\'olya inequality}\label{sec:3}
In this section we prove our first results. We begin with
a Plancherel--P\'olya type inequality adapted to the Siegel half-space.
This result implies in particular that the spaces $\PW^s_a$ are
complete, for $0\le s<n+1$.

\subsection{The Plancherel--P\'olya inequality}
We now prove a Phrang\'em--Lindel\"of type
result for the Siegel half-space. We first need the following modified version of the classical result in the complex plane. 

\begin{lem}\label{P-L-plane}
Let $g\in \Hol(\bbC)$ and suppose that there exist constants $c,a, M>0$ such that:
\begin{itemize}
 \item[(i)] $ |g(t)| \le M$,
 \item[(ii)] for every $\eps>0$ there exists $C_\eps>0$ such that 
$$
|g(w)| \le C_\eps e^{(a+\eps)( c|t|+ |u|)},$$
where $w=t+iu$.
\end{itemize}
Then,
$$
|g(w)| \le M e^{a|u|} \,. 
$$
\end{lem}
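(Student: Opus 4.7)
The plan is to prove the bound in the upper half-plane $H^+=\{w:\Im w>0\}$; the lower half-plane is symmetric. I combine a Phragmén--Lindelöf estimate in each quarter-plane with Hadamard's three-lines theorem to extract the sharp constants in the conclusion.

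\textbf{Step 1 (quadrant bound).} Fix $\eps>0$ and set $F_\eps(w)=g(w)\,e^{i(a+\eps)w}$. In the first quadrant $Q_1=\{t>0,\,u>0\}$, hypothesis (i) gives $|F_\eps(t)|=|g(t)|\le M$ on the real edge, while applying (ii) with $\eps/2$ yields
\[
|F_\eps(iu)|=|g(iu)|\,e^{-(a+\eps)u}\le C_{\eps/2}\,e^{-\eps u/2}\le C_{\eps/2}
\]
on the imaginary edge. Inside $Q_1$ one computes
\[
|F_\eps(w)|\le C_{\eps/2}\,e^{(a+\eps/2)ct-(\eps/2)u}\le C_{\eps/2}\,e^{(a+\eps/2)c|w|},
\]
which is of order $1<2$. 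The classical Phragmén--Lindelöf theorem for a sector of opening $\pi/2$ then gives $|F_\eps|\le M_\eps:=\max(M,C_{\eps/2})$ throughout $Q_1$; i.e.\ $|g(w)|\le M_\eps\,e^{(a+\eps)u}$ on $Q_1$. The same argument in the second quadrant yields the identical bound, so $|g(w)|\le M_\eps\,e^{(a+\eps)u}$ for every $w\in H^+$.

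\textbf{Step 2 (three-lines refinement).} Set $G(w)=g(w)\,e^{iaw}$. Then $|G(t)|\le M$ on $\bbR$ and, by Step~1, $|G(w)|\le M_\eps\,e^{\eps u}$ on $H^+$ for every $\eps>0$. Fix $w_0=t_0+iu_0\in H^+$ and $U>u_0$. In the strip $\{0<\Im w<U\}$, $G$ is bounded (by $M_\eps e^{\eps U}$), with $|G|\le M$ on $\Im w=0$ and $|G|\le M_\eps e^{\eps U}$ on $\Im w=U$. Hadamard's three-lines theorem gives
\[
|G(w_0)|\le M^{1-u_0/U}\bigl(M_\eps\,e^{\eps U}\bigr)^{u_0/U}=M^{\,1-u_0/U}\,M_\eps^{\,u_0/U}\,e^{\eps u_0}.
\]
Sending $U\to\infty$ with $\eps$ and $w_0$ fixed, the first two factors tend to $M$ and $1$ respectively, so $|G(w_0)|\le M\,e^{\eps u_0}$; letting $\eps\to 0$ produces $|G(w_0)|\le M$, i.e.\ $|g(w_0)|\le M\,e^{a u_0}$. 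The analogous argument, with $\tilde G(w)=g(w)e^{-iaw}$ in the lower half-plane, completes the proof.

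\textbf{Main obstacle.} The constant $C_\eps$ in (ii) is not assumed to remain bounded as $\eps\to 0$, so Step~1 alone only yields $|g(w)|\le M_\eps\,e^{(a+\eps)|u|}$ with possibly divergent $M_\eps$. Step~2 is designed precisely to absorb this unwanted constant: pushing $U\to\infty$ first sends $M_\eps^{u_0/U}\to 1$ while degrading $e^{\eps U}$ into $e^{\eps u_0}$, after which the final $\eps\to 0$ recovers the sharp constants $M$ and $a$ claimed in the statement.
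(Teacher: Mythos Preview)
Your proof is correct. The paper itself does not give a proof of this lemma; it simply states that ``the classical proof applies also here and we skip the details'' and refers to Young's book, so there is nothing to compare against except the classical argument --- which is exactly what you have reproduced. Your two-step structure (Phragm\'en--Lindel\"of in each quadrant to get a crude bound $|g(w)|\le M_\eps e^{(a+\eps)|u|}$, followed by a sharpening step to kill the uncontrolled constant $M_\eps$) is standard. The only cosmetic difference from the most common textbook phrasing is that in Step~2 you invoke Hadamard's three-lines theorem and send $U\to\infty$, whereas many references instead apply the ``minimal type'' form of Phragm\'en--Lindel\"of directly to $G(w)=g(w)e^{iaw}$ in the half-plane (angle $\pi$, using that $|G(w)|\le M_\eps e^{\eps|w|}$ for every $\eps>0$). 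Your three-lines argument is in fact a self-contained proof of that version, so the two routes are equivalent.
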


The classical proof applies also here and we skip the details; see,
for instance, \cite{Young}.  In the Siegel half-space we have the following variation.

\begin{lem}\label{P-L-Siegel}
Let $F\in\Hol(\bbC^{n+1})$ and suppose that there exists constant $c,a,M>0$ such that:
\begin{itemize}
 \item[(i)]$ |F_{|_{b\cU}}(\z)| =
|\widetilde F_0[z,t] | \le  M$,
\item[(ii)] for every $\eps>0$ there exists $C_\eps>0$ such that 
$$|F(\z)|=|\widetilde F(z,t,h)| \le C_\eps e^{(a+\eps)( c|[z,t]|^2 +|h|)}.
$$
\end{itemize}
Then, setting $h_- =-\min(0,h)$, we have
$$
|F(\z)|=|\widetilde F(z,t,h)| \le Me^{ah_-}\,.
$$
\end{lem}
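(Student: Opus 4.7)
The plan is to split the argument according to the sign of $h$, using different techniques in each case.

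For $h \ge 0$, fix $\z_0 = (\z_0', \z_{n+1,0})$ with $h_0 \ge 0$ and introduce the entire function $\phi(\z') := F(\z', \z_{n+1,0})$ on $\bbC^n$. The set of $\z' \in \bbC^n$ for which $(\z', \z_{n+1,0})$ lies on $b\cU$ is precisely the Euclidean sphere $S = \{\z' \in \bbC^n : |\z'|^2 = 4\Im \z_{n+1,0}\}$, which is the topological boundary of the bounded ball $\overline B = \{\z' \in \bbC^n : |\z'|^2 \le 4\Im \z_{n+1,0}\} \subseteq \bbC^n$. Hypothesis (i) gives $|\phi| \le M$ on $S$, and the classical maximum modulus principle applied to $\phi$ (entire, hence continuous on $\overline B$ and holomorphic on its interior) yields $|\phi(\z')| \le M$ throughout $\overline B$. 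Since $|\z_0'|^2 \le 4\Im \z_{n+1,0}$ whenever $h_0 \ge 0$, this gives $|F(\z_0)| \le M = M e^{a h_-}$. Note that this step does not use the growth hypothesis (ii) at all.

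For $h < 0$, fix $\z_0 = (\z_0', \z_{n+1,0})$ with $h_0 < 0$ and consider the entire one-variable slice $g(w) := F(\z_0', w + \tfrac{i}{4}|\z_0'|^2)$, $w \in \bbC$. Writing $w = t + iu$, the defining identity of $\Psi$ gives $g(w) = \widetilde F(\z_0', t, u)$; in particular, the real axis of $w$ parametrizes a horizontal line inside $b\cU$, so $|g(t)| = |\widetilde F_0[\z_0', t]| \le M$ by (i). Combining hypothesis (ii) with the elementary inequality $|[\z_0', t]|^2 = (\tfrac{1}{16}|\z_0'|^4 + t^2)^{1/2} \le \tfrac14 |\z_0'|^2 + |t|$ produces $|g(w)| \le C_\eps' e^{(a+\eps)(c|t| + |u|)}$ for every $\eps > 0$, where the constant $C_\eps'$ absorbs the fixed factor depending on $\z_0'$. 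Applying Lemma \ref{P-L-plane} to $g$ yields $|g(w)| \le M e^{a|u|}$ on all of $\bbC$, and evaluating at $w_0 = \Re \z_{n+1,0} + i h_0$ gives $|F(\z_0)| \le M e^{a|h_0|} = M e^{-a h_0} = M e^{a h_-}$.

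The only non-routine insight is that the asymmetric exponent $h_-$ (in contrast to the symmetric $|h|$ that a one-variable $\z_{n+1}$-slice alone would produce) is a genuinely several-variable phenomenon: the complex slices $\{\z_{n+1} = \z_{n+1,0}\}$ meet $b\cU$ in Euclidean spheres, so the ordinary maximum modulus principle settles the region $\{h \ge 0\}$ with no use of the growth hypothesis. The Phragm\'en--Lindel\"of input of Lemma \ref{P-L-plane} is then needed only to handle the exterior region $\{h < 0\}$.
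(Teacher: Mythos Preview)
Your proof is correct and uses exactly the same two ingredients as the paper's proof: the one-variable Phragm\'en--Lindel\"of Lemma~\ref{P-L-plane} applied to the slice $g_{\z'}(w)=F(\z',w+\tfrac{i}{4}|\z'|^2)$, and the maximum modulus principle on the ball $\{|\z'|^2\le 4\Im\z_{n+1}\}$ for fixed $\z_{n+1}$. The only difference is organizational---the paper first obtains $|F(\z)|\le Me^{a|h|}$ everywhere and then sharpens this to $M$ on $\overline{\cU}$, whereas you split into the cases $h\ge0$ and $h<0$ from the outset; your ordering is arguably a bit cleaner since it avoids proving a bound that is immediately superseded.
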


\proof
For $w=t+iu\in\bbC$ and for every fixed $\z'\in\bbC^n$ we define
$$
g_{\z'}(w) = F(\z',w+\textstyle{\frac i4}|\z'|^2)=\widetilde F(z,t,u).
$$
Then, $g_{\z'}$ is entire and from $(ii)$ above we get 
\begin{align*}
|g_{\z'}(w)|
& \le C_\eps e^{(a+\eps)( c|[z,t]|^2+ |u|)} \le C_\eps e^{(a+\eps)(
  c\frac{|z|^2}{4}+c|t|+ |u|)} \\
& \le C'_\eps(\z') e^{(a+\eps)( c|t|+ |u|)}
\end{align*}
where the constant $C'_\eps(\z')$ depends on the fixed $\z'\in\bbC^n$. Moreover,
$$
|g_{\z'}(t)|  = |F(\z',t+\textstyle{\frac i4}|\z'|^2)| \le M,
$$
where $M$ is an absolute constant not depending on $\z'$. Lemma \ref{P-L-plane} implies
$$
|g_{\z'}(w)|\le M e^{a|u|} \,.
$$
Thus, setting $w=
\z_{n+1}-\textstyle{\frac i4}|\z'|^2$ we have $h=\rho(\z)= u$ and
$$
|F(\z',\z_{n+1})| = |F(\z', w+\textstyle{\frac i4}|\z'|^2)|= |g_{\z'}(w)| \le M e^{a|h|} \,. 
$$
In order to complete the proof, we need to show that we can improve
the above inequality when $h>0$, by showing that in fact $|F(\z)|\le
M$ when $\z\in\ov\cU$.  For each $\z_{n+1}$ fixed we have
$$
\sup_{\z':\, \frac{|\z'|^2}{4}\le\Im\z_{n+1} } |F(\z',\z_{n+1})| 
=  \sup_{\z':\, \frac{|\z'|^2}{4}=\Im\z_{n+1} } |F(\z',\z_{n+1})| \le 
\sup_{\z\in b\cU} |F(\z',\z_{n+1})|\,.
$$
Therefore, 
\begin{align*}
\sup_{\z\in\ov\cU} |F(\z)|
& = \sup_{(\z',\z_{n+1}):\, \frac{|\z'|^2}{4}\le\Im\z_{n+1} } |F(\z)|
\le \sup_{\z\in b\cU} |F(\z',\z_{n+1})| \le M\,,
\end{align*}
as we wished to show. 
\epf

From this Phragm\'en--Lindel\"of principle we deduce a
version of the
Plancherel--P\'olya inequality in this setting. For $c\in\bbR$ we set $\cU_c = \big\{
\z=(\z',\z_{n+1}): 
\Im\z_{n+1}>\frac{|\z'|^2}{4}+c\big\}$.  

\begin{prop}{\bf (Plancherel--P\'olya Inequality)}\label{P-P-prop}
Let $F\in\cE_a$ be such that $\widetilde F_0\in L^p(\bbH_n),
1<p<\infty$.  Then, for all $h\in\bbR$,
$$
\int_{\bbH_n} |\widetilde F_h[z,t]|^p\, dzdt \le 
e^{aph_-} \|\widetilde
F_0\|^p_{L^p(\bbH_n)} \,,
$$
where $(z,t,h)=\Psi(\z)$ and $h_- =-\min(0,h)$. In particular, $F\in H^p(\cU_c)$, the Hardy space on $\cU_c$, for all $c\in\bbR$. 
\end{prop}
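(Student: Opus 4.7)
The plan is to reduce the $L^p$ bound to the pointwise Phragm\'en--Lindel\"of estimate of Lemma \ref{P-L-Siegel} through a mollification/duality scheme. The key ingredient is the holomorphic right-action of $\bbH_n$ on $\bbC^{n+1}$ defined by
$$
\Phi_{[w,s]}(\z',\z_{n+1}) := \bigl(\z'+w,\, \z_{n+1}+s+\tfrac{i}{2}\bar w\cdot\z' + \tfrac{i}{4}|w|^2\bigr),
$$
which preserves $b\cU$ (where it restricts to right translation by $[w,s]$) and every level hypersurface $\{h=\mathrm{const}\}$; explicitly, $\Psi(\Phi_{[w,s]}(\z))=([z,t]\cdot[w,s],\,h)$ whenever $\Psi(\z)=(z,t,h)$.

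For $g\in C_c(\bbH_n)$ with $\|g\|_{L^{p'}}\le 1$, consider the mollification
$$
G(\z):=\int_{\bbH_n} F(\Phi_{[w,s]}(\z))\,\overline{g[w,s]}\,dw\,ds.
$$
The integral converges absolutely since $g$ has compact support and $F$ is continuous, and holomorphicity in $\z$ follows by differentiation under the integral sign. For $\z\in b\cU$, $G$ reduces to a right convolution of $\widetilde F_0$ with $g$ on $\bbH_n$, so Young's inequality gives $\sup_{b\cU}|G|\le\|\widetilde F_0\|_{L^p}$. A direct estimate shows that, uniformly for $[w,s]$ in any fixed compact set, $\|\Phi_{[w,s]}(\z)\|_\cU = \|\z\|_\cU + o(\|\z\|_\cU)$ as $\|\z\|_\cU\to\infty$, hence $G\in\cE_a$. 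Lemma \ref{P-L-Siegel} applied to $G$ then yields the pointwise bound $|G(\z)|\le\|\widetilde F_0\|_{L^p}\,e^{ah_-}$.

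To extract the $L^p$ conclusion, fix $\z_0$ with $\Psi(\z_0)=(z_0,t_0,h)$ and change variables $[u,v]=[z_0,t_0]\cdot[w,s]$ (with unit Jacobian) to rewrite $G(\z_0)=\int_{\bbH_n}\widetilde F_h[u,v]\,\overline{g_{\z_0}[u,v]}\,du\,dv$, where $g_{\z_0}[u,v]:=g([z_0,t_0]^{-1}\cdot[u,v])$. Since left translation on $\bbH_n$ is an $L^{p'}$-isometry, $g_{\z_0}$ ranges over $C_c(\bbH_n)\cap\{\|\cdot\|_{p'}\le 1\}$, a dense subset of the unit ball of $L^{p'}(\bbH_n)$, as $g$ does. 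Combining the bound $|G(\z_0)|\le e^{ah_-}\|\widetilde F_0\|_{L^p}$ with the continuity of $\widetilde F_h$ and Riesz duality then yields $\|\widetilde F_h\|_{L^p}\le e^{ah_-}\|\widetilde F_0\|_{L^p}$. The assertion $F\in H^p(\cU_c)$ is immediate, since $\sup_{h>c}\|\widetilde F_h\|_{L^p}\le e^{a\max(0,-c)}\|\widetilde F_0\|_{L^p}<\infty$. The main technical point is the growth estimate $\|\Phi_{[w,s]}(\z)\|_\cU = \|\z\|_\cU + o(\|\z\|_\cU)$, which is what preserves the sharp exponential type $a$ when passing from $F$ to $G$ and thus makes Lemma \ref{P-L-Siegel} applicable with the sharp constant (rather than some larger $Ca$).
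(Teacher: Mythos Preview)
Your proof is correct and is essentially the paper's own argument: mollify $F$ by integrating its Heisenberg right-translates against a compactly supported test function, bound the result on $b\cU$ via H\"older, verify the growth hypothesis of Lemma~\ref{P-L-Siegel}, and conclude by duality. One remark: your emphasis on the asymptotic $\|\Phi_{[w,s]}(\z)\|_\cU=\|\z\|_\cU+o(\|\z\|_\cU)$ is more than needed, since Lemma~\ref{P-L-Siegel} allows an arbitrary constant $c$ in front of $|[z,t]|^2$ and only the coefficient of $|h|$ must be sharp; the paper simply uses the Cygan inequality $|[z,t][w,s]|\le|[z,t]|+|[w,s]|$ to obtain $|G(\z)|\le C'_\eps e^{(a+\eps)(2|[z,t]|^2+|h|)}$, which already suffices.
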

\proof
Let $\vp\in C^\infty_c(\bbH_n)$, $0\le\vp\le1$, $\|\vp\|_{L^{p'}(\bbH_n)}
\le 1$, where $\textstyle{\frac 1p+\frac 1{p'}}=1$, and define
\begin{align*}
G(\z)
& =  \int_{\bbH_n} \widetilde  F_h\big( [z,t][w,s]\big) \vp[w,s]\, dwds  \,.
\end{align*}
Then, if $\z\in b\cU$, i.e. $\Psi(\z)=(z,t,0)\equiv[z,t]\in \bbH_n$, 
$$
|G(\z)| \le \|\widetilde  F_0\|_{L^p(\bbH_n)} \|\vp\|_{L^{p'}(\bbH_n)}
\le \|\widetilde  F_0\|_{L^p(\bbH_n)} \,.
$$
Moreover, for $\z\in\bbC^{n+1}$, using  \cite{Cygan} (with a
slight abuse of notation)  we have 
\begin{align*}
 \| \big([z,t][w,s], h\big) \|_\cU  
& = \big| [z,t][w,s]\big|^2 +|h|  \le \big( | [z,t]|+|[w,s]|\big)^2
+|h| \\
& \le 2\big( [z,t]|^2+|[w,s]|^2\big) +|h| \,.   
\end{align*}
  Therefore,
\begin{align}
| G (\z) |
& \le C_\eps 
 \int_{\bbH_n} e^{(a+\eps)( 2[z,t]|^2+|h| +2|[w,s]|^2)}  |\vp[w,s]|\,
 dwds  \notag \\
& \le C'_\eps  e^{(a+\eps)( 2|[z,t]|^2+|h|)} \,, \label{extra-exp-type-est}
\end{align}
since $\vp$ has compact support.  By Lemma \ref{P-L-Siegel} we obtain
$$
|G(\z)| \le  e^{ah_-}  \|\widetilde F_0\|_{L^p(\bbH_n)}\,,
$$
that is,
$$
\Big| 
\int_{\bbH_n} \widetilde F_h \big([z,t][w,s]\big)\vp[w,s]\, dwds \Big|
\le  e^{ah_-}  \|\widetilde F_0\|_{L^p(\bbH_n)} \,,
$$
for every $\vp\in C^\infty_c(\bbH_n)$, $\| \vp\|_{L^{p'}(\bbH_n)}\le
1$.  
Therefore,
$$
\|\widetilde F_h \|_{L^p(\bbH_n)} 
= \|\widetilde F_h ([\cdot,\cdot][w,s]) \|_{L^p(\bbH_n)}  \le 
e^{ah_-} \|\widetilde F_0\|_{L^p(\bbH_n)} \,.
$$
The conclusions follow.
\epf

\proof[Proof of Theorem \ref{main-4}]
We begin with the case $s=0$. 
 Let $F\in \PW_a$.  By
Proposition \ref{P-P-prop} it follows that $\| \widetilde F_h\|_{L^2(\bbH_n)}
\le e^{ah_-} \|\widetilde F_0\|_{L^2(\bbH_n)} $ and $F\in H^2(\cU_c)$ for
all $c\in\bbR$. In particular, arguing as in Remark \ref{remark-OV} we
obtain that 
$$
\beta_\lambda(\widetilde F_h) = e^{\lambda h}
\beta_\lambda(\widetilde F_0)  
$$
 for all $h\in\bbR$. Thus, thanks to the Paley--Wiener characterization of $H^2$, we have $\sigma_{\lambda}(\widetilde F_0)=0$ for $\lambda>0$. By Plancherel's formula it then follows that
\begin{align*}
\| \widetilde F_h \|_{L^2(\bbH_n)}^2
& = \int_{-\infty}^0 \| \beta_\lambda(\widetilde F_h)\|_{\HS}^2
|\lambda|^n\, d\lambda = \int_{-\infty}^0 e^{2\lambda h}\| \beta_\lambda(\widetilde F_0)\|_{\HS}^2
|\lambda|^n\, d\lambda \,,
\end{align*}
whereas, by Proposition \ref{P-P-prop} 
\begin{align*}
\| \widetilde F_h \|_{L^2(\bbH_n)}^2
& \le e^{2ah_-} 
 \int_{-\infty}^0\| \beta_\lambda(\widetilde F_0)\|_{\HS}^2
|\lambda|^n\, d\lambda \,.
\end{align*}
Therefore, for all $h\in\bbR$,
$$
\int_{-\infty}^0 e^{2\lambda h}\| \beta_\lambda(\widetilde F_0)\|_{\HS}^2
|\lambda|^n\, d\lambda 
\le e^{2ah_-} 
 \int_{-\infty}^0 \| \beta_\lambda(\widetilde F_0)\|_{\HS}^2
|\lambda|^n\, d\lambda\,,
$$
and, by letting $h\to-\infty$,
this easily implies that $\supp \beta_\lambda(\widetilde F_0)
\subseteq [-a,0)$.

To prove the converse direction, given $\tau\in \cH^2 \big([-a,0)\big)$, arguing as \cite[Lemma 3.1]{AMPS}, we see that for
every $\lambda<0$, 
$
\big| \tr
\big(\tau(\lambda) \beta_\lambda[z,t]^* \big)
\big| \le \| \tau(\lambda)\|_{\HS} $.  
Therefore,
\begin{align*}
 \int_{-a}^0 e^{\lambda h} 
\big| \tr
\big(\tau(\lambda) \beta_\lambda[z,t]^* \big)
\big|  |\lambda|^n\, d\lambda 
&  \le \int_{-a}^0 e^{\lambda h} \| \tau(\lambda)\|_{\HS} |\lambda|^n\,
 d\lambda\notag \\
& \le \| \tau\|_{\cL^2} \bigg( 
\int_{-a}^0 e^{2\lambda h}  |\lambda|^{n}\,
 d\lambda \bigg)^{1/2} \notag\\
& \le C \| \tau\|_{\cL^2}  e^{ah_-} \,,
\end{align*}
This shows that the integral in \eqref{PW-D-eq3} converges
absolutely. Let $F$ be given by \eqref{PW-D-eq3}. The same
argument as in \cite[Lemma 3.1]{AMPS} now shows that $F$ is entire,
hence in $\cE_a$ by the previous estimate.
Moreover, $F$  is such that  
$\beta_\lambda(\widetilde F_0) =\tau(\lambda)\in \cL^2$, so that by
Plancherel's formula $\widetilde F_0\in L^2(\bbH_n)$, that is, $F\in \PW_a$.
\ms

We now consider the case $s>0$.  If $F\in \PW_a^s$, then by Lemma \ref{F-T-Ws}
$\beta_\lambda(\widetilde{F_0})$ is well defined and
$\beta(\widetilde{F_0})\in \cL^2_s$. 
Let  $\vp$
be a Schwartz function on $\bbH_n$ such that $\beta_\lambda(\vp)=
\vp_0(\lambda) \la \cdot, e_0\ra e_0$ for $\vp_0\in C^\infty$ having
support in $[-N,-1/N]$, for some $N>0$.
 
We define $\widetilde G_h [z,t] = (\widetilde F_h *\vp) [z,t]$,
so that 
$$
G(\z)
=  \int_{\bbH_n} \widetilde F_h\big([z,t][w,s]^{-1}\big)\vp[w,s]\, dwds \,.
$$
We claim that $G\in \cE_a$.  
Indeed, observe that $\beta_\lambda(\widetilde G_0) = 
 \beta_\lambda(\widetilde F_0) \beta_\lambda(\vp) =
 \vp_0(\lambda)\beta_\lambda(\widetilde F_0)$, has compact
 support contained in $[-N,-1/N]$.
 Since $\beta(\widetilde F_0) \in \cL^2_s$, it follows that
 $\beta(\widetilde G_0) \in \cL^2$, which in turns gives $\widetilde
 G_0\in L^2(\bbH_n)$. 
The first part of the theorem  now shows
that $G$ is entire function of exponential type at most $N$.  However,
since $F\in \cE_a$, arguing as in \eqref{extra-exp-type-est}  we
also have 
\begin{align*}
|G(\z)| 
& \le  \int_{\bbH_n} |\widetilde F_h\big([z,t][w,s]^{-1}\big) |\, 
|\vp [w,s]|\, dwds \\
& \le C  e^{(a+\eps)( 2|[z,t]|^2+|h|)}  \,.
\end{align*}
Since $\widetilde G_0\in L^2(\bbH_n)$, by the previous case $s=0$,
$G\in \PW_a$ and 
$\supp(\beta_\lambda(\widetilde G_0)) \subseteq [-a,0)$.  By the
choice of $\vp$, this easily implies that also 
$\supp(\beta_\lambda(\widetilde F_0)) \subseteq [-a,0)$,
and that $\ran \beta_\lambda(\widetilde F_0)\subseteq
\operatorname{span}\{e_0\}$.

By Lemma \ref{F-T-Ws}, in particular by \eqref{F-T-Ws-eq}, we have
$$
\beta_\lambda \big(\Delta^{s/2} \widetilde F_0 \big) e_\alpha =
|\lambda|^{s/2} [1+|\alpha|/n]^{s/2} \beta_\lambda(\widetilde F_0) e_\alpha 
\,.
$$

Then,
\begin{align*}
\| \beta_\lambda \big(\Delta^{s/2} \widetilde F_0 \big)\|_{\HS}^2 & =
\sum_\alpha \big| \la \beta_\lambda \big(\Delta^{s/2} \widetilde F_0 \big)
e_\alpha , e_\alpha\ra_{\cF^\lambda} \big|^2 = |\lambda|^s \| \beta_\lambda  (\widetilde F_0 )\|_{\HS}^2 \,.
\end{align*}
Hence, 
\begin{align*}
\| F\|_{\PW_a^s}^2 
& = \int_{\bbH_n} |\Delta^{s/2}\widetilde F_0 [z,t]|^2\, dzdt \\
& = \int_{-\infty}^0 \|\beta_\lambda(\Delta^{s/2}\widetilde
F_0) \|_{\HS}^2
  |\lambda|^n\, d\lambda \\
& = \int_{-\infty}^0 \|\beta_\lambda(\widetilde F_0) \|_{\HS}^2
  |\lambda|^{n+s}\, d\lambda \,.
\end{align*}
 
In particular, $\beta_\lambda(\widetilde F_0)\in \cH^2_s$, with equality
of norms.  
 
Conversely, let 
$\tau\in \cH_s^2 \big([-a,0)\big)$ and $F$ be given by
\eqref{PW-D-eq3}. We have that
\begin{align}
 \int_{-a}^0 e^{\lambda h} 
\big| \tr
\big(\tau(\lambda) \beta_\lambda[z,t]^* \big)
\big|  |\lambda|^n\, d\lambda 
&  \le \int_{-a}^0 e^{\lambda h} \| \tau(\lambda)\|_{\HS} |\lambda|^n\,
 d\lambda \notag \\
& \le \| \tau\|_{\cL^2_s} \bigg( 
\int_{-a}^0 e^{2\lambda h}  |\lambda|^{n-s}\,
 d\lambda \bigg)^{1/2} \notag\\
& \le C \| \tau\|_{\cL^2_s}  e^{ah_-} \,, \label{estimate-sollevamento-2}
\end{align}
where $C<+\infty$ if and only if $s<n+1$. In this case we 
can conclude that $F\in
\cE_a$. 
Now, we claim that $\beta_\lambda (\Delta^{s/2} \widetilde F_0) = |\lambda|^{s/2} 
\tau(\lambda)$, so that 
\begin{align*}
\| \Delta^{s/2} \widetilde F_0\|_{L^2(\bbH_n)}^2
& = \int_{-a}^0 \| \beta_\lambda (\Delta^{s/2} \widetilde F_0)\|_{\HS}^2
|\lambda|^n \, d\lambda\\
& = \int_{-a}^0 \| \tau(\lambda)\|_{\HS}^2
|\lambda|^{n+s} \, d\lambda \\
& = \| \tau\|_{\cL^2_s}^2 \,,
\end{align*}
as we wished to show.  It remains to prove the claim. It is easy
to construct fields of operators $\eta_\eps$ such that
$\eta_\eps \in \cL^2(-a+\eps,-\eps)$ be smooth in $\lambda$ and 
$\eta_\eps\to \tau$ in $\cH^2_s (-a,0)$ as $\eps\to0$. Then, the
function 
$$
G_\eps(\z)=\widetilde
G_{\eps,h}[z,t]:=\frac{1}{(2\pi)^{n+1}}
\int_{-a}^{-\eps}e^{\lambda h}\tr\big(\eta_\eps(\lambda) \beta_\lambda[z,t]^*\big)\,
|\lambda|^n d\lambda
$$
is in $\cS(\bbH_n)$. 
Hence, using \eqref{spec-id-2} and \eqref{inve-form},  we have that
$$
\beta_\lambda(\Delta^{s/2}(\widetilde
G_{\eps,0}))= |\lambda|^{s/2} \beta_\lambda(\widetilde
G_{\eps,0}) =
|\lambda|^{s/2}\eta_\eps(\lambda) .
$$
Since $\eta_\eps\to \tau$ in $\cH^2_s (-a,0)$,  $|\lambda|^{s/2}\eta_\eps 
\to |\lambda|^{s/2}\tau$  in  $\cH^2(-a,0)$, so  that
$\Delta^{s/2}(\widetilde G_{\eps, 0})$ converges in
$L^2(\bbH_n)$ to a function $G$ such that $\beta_\lambda
(G)=|\lambda|^{s/2}\tau(\lambda)$. Moreover, since $\dot W^{2,s}$
embeds continuously in $L^{2^*}$, we also have that $\widetilde
G_{\eps, 0}$ is a Cauchy  in $L^{2^*}$ and its limit
 is $\widetilde F_0$. Then, by definition, $\Delta^{s/2}\widetilde F_0= G$ and the claim follows.  \epf

As a consequence of the Paley--Wiener theorems we obtain that the space 
$\PW^s_a, 0\leq s<n+1$, is a reproducing kernel Hilbert space and we explicitly compute its kernel.
We set
$$
Q(\omega,\z)=
\textstyle{\frac{1}{2i}}  (\omega_{n+1}-\ov\z_{n+1} ) - \textstyle{\frac14}
  \omega'\cdot\ov\z'\,,
$$
so that, by  writing
$\zeta=\big(z, t+i(h+\frac{|z|^2}{4})\big)$, 
$\omega= \big(w, u+i(k+\frac{|w|^2}{4})\big)$, 
$$
\widetilde Q(z,t,h;w,u,k) = \textstyle{\frac{1}{2i}} \Big(
u-t+\textstyle{\frac12} \Im(w\cdot\bar z ) +i  \big( h+k+\textstyle{\frac14} |w-z|^2 \big)\Big)\,.
$$

\begin{cor}\label{thm-kernel}
 Let $s\in [0,n+1)$. Then, the space $\PW^s_a$ is a reproducing kernel Hilbert space with reproducing kernel
 \begin{equation}\label{PW-kernel}
  K(\omega,\z)=K_{\z}(\omega)= \frac{1}{(2\pi)^{n+1}} \int_{-a}^{0} 
 e^{2i\lambda Q(\omega,\z)  }
|\lambda|^{n-s}\,
d\lambda \, ,
\end{equation}
 and  $\beta_\lambda\big( \widetilde{K_{\z,0}}\big) =
\chi_{[-a,0)}(\lambda) e^{\lambda
  h}|\lambda|^{-s}P_0\beta_\lambda[z,t]$, where $P_0$ denotes the orthogonal projection onto the subspace
generated by $e_0$. 
\end{cor}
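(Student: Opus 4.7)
The strategy is to use Theorem \ref{main-4} to transfer the question from $\PW_a^s$ to the model space $\cH_s^2([-a,0))$ via the group Fourier transform, find the operator-field corresponding to $K_\z$ by matching the reproducing property with the inversion formula, and then invert back explicitly.

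First, I would use the fact that by Theorem \ref{main-4} the norm on $\PW_a^s$ is realized, via $F\mapsto \beta_\lambda(\widetilde F_0)$, as the $\cL^2_s$--norm on the target space $\cH_s^2\big([-a,0)\big)$. Thus the inner product on $\PW_a^s$ is
\[
\langle F,G\rangle_{\PW_a^s} = \frac{1}{(2\pi)^{n+1}}\int_{-a}^{0}
\tr\bigl(\beta_\lambda(\widetilde F_0)\beta_\lambda(\widetilde G_0)^*\bigr)|\lambda|^{n+s}\,d\lambda,
\]
while the reconstruction formula gives
\[
F(\z) = \frac{1}{(2\pi)^{n+1}}\int_{-a}^{0} e^{\lambda h}\tr\bigl(\beta_\lambda(\widetilde F_0)\beta_\lambda[z,t]^*\bigr)|\lambda|^{n}\,d\lambda.
\]
Forcing $\langle F,K_\z\rangle_{\PW_a^s}=F(\z)$ and using the fact that $\beta_\lambda(\widetilde F_0)$ has range in $\operatorname{span}\{e_0\}$, a rank--one manipulation of the two traces leads to the identification
\[
\beta_\lambda(\widetilde{K_{\z,0}}) \;=\; e^{\lambda h}|\lambda|^{-s}P_0\beta_\lambda[z,t],
\]
which is the second assertion of the statement.

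Next I would verify that this field of operators indeed lies in $\cH_s^2\big([-a,0)\big)$: the support condition and the range condition are built into the formula, while membership in $\cL^2_s$ amounts to checking that
\[
\int_{-a}^{0} e^{2\lambda h}|\lambda|^{-2s}\,\|P_0\beta_\lambda[z,t]\|_{\HS}^2\,|\lambda|^{n+s}\,d\lambda
\]
is finite. Since $\beta_\lambda[z,t]$ is unitary, $\|P_0\beta_\lambda[z,t]\|_{\HS}^2=1$, and the integral reduces to $\int_{-a}^{0}e^{2\lambda h}|\lambda|^{n-s}\,d\lambda$, which is finite precisely because $s<n+1$. The range restriction $\operatorname{ran}\beta_\lambda(\widetilde{K_{\z,0}})\subseteq\operatorname{span}\{e_0\}$ is then clear, so $K_\z\in \PW_a^s$ and the reproducing property holds by construction.

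Finally, to obtain the integral representation \eqref{PW-kernel}, I would substitute $\beta_\lambda(\widetilde{K_{\z,0}})$ into the inversion formula of Theorem \ref{main-4}. The key step is the identity
\[
\tr\bigl(P_0\beta_\lambda[z,t]\beta_\lambda[w,u]^*\bigr) = \langle\beta_\lambda[w,u]^*e_0,\beta_\lambda[z,t]^*e_0\rangle_{\cF^\lambda},
\]
which is a Fock--space inner product of two Gaussian--type functions that can be evaluated by a straightforward Gaussian integral using $\beta_\lambda[z,t]^*e_0(\xi)=\exp(-i\lambda t-\tfrac{\lambda}{2}\xi\cdot z+\tfrac{\lambda}{4}|z|^2)$ for $\lambda<0$. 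Multiplying by the remaining factors $e^{\lambda h}e^{\lambda k}|\lambda|^{-s}|\lambda|^n$ and collecting terms in Heisenberg coordinates, one recognizes the exponent as $2i\lambda Q(\omega,\z)$ via the identity
\[
2iQ(\omega,\z) = \bigl[(u-t)+\tfrac12\Im(w\cdot\bar z)\bigr] + i\bigl[(h+k)+\tfrac14|w-z|^2\bigr],
\]
already computed in the definition of $\widetilde Q$. The main obstacle is the bookkeeping of the Gaussian integral over $\cF^\lambda$ and the careful simplification of the resulting exponent to match $Q(\omega,\z)$; everything else is a direct application of results already proved.
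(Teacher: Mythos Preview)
Your proposal is correct and follows essentially the same route as the paper: match the reconstruction formula from Theorem~\ref{main-4} against the $\cL^2_s$--inner product to read off $\beta_\lambda(\widetilde{K_{\z,0}})$, then plug back into the inversion formula and compute the rank-one trace $\tr\bigl(P_0\beta_\lambda[z,t]\beta_\lambda[w,u]^*\bigr)$ explicitly. The only cosmetic differences are that the paper first invokes the Plancherel--P\'olya inequality (Proposition~\ref{P-P-prop}) to assert that $\PW_a^s$ is an RKHS before computing the kernel, and it cites \cite[Corollary 4.3]{AMPS} for the trace computation rather than carrying out the Gaussian integral by hand as you do; your explicit verification that the candidate field lies in $\cH^2_s\bigl([-a,0)\bigr)$ is a detail the paper leaves implicit.
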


\begin{proof}
  The Plancherel--P\'olya Inequality,  Proposition \ref{P-P-prop},
  implies that $\PW_a^s$  continuously  embeds into
  $\Hol(\bbC^{n+1})$.  Hence,
 the completeness of $\PW^s_a$ and the boundedness of the point-evaluation functionals follow.
 
 The explicit computation of the kernel follows from a standard
 argument.   Let $\tau$ denote the element of $\cH^2_s([-a,0))$ and define 
\begin{align*}
F(\z)&= \widetilde F_h[z,t]=
\frac{1}{(2\pi)^{n+1}} \int_{-a}^0
e^{\lambda  h}
\tr\big(\tau(\lambda) \beta_\lambda[z,t]^*\big)\,
|\lambda|^n d\lambda\\
&=
\frac{1}{(2\pi)^{n+1}} \int_{-a}^0
e^{\lambda  h}
\tr\big(\tau(\lambda) P_0\beta_\lambda[z,t]^*\big)\,
|\lambda|^n d\lambda.
\end{align*}
The last identity holds since
$\ran\tau_F(\lambda)\subseteq\operatorname{span}\{e_0\}$. We also have 
\begin{align*}
  F(\z)
  &= \widetilde F_h[z,t]  =\langle F, K_\z\rangle_{\PW^s_a}=
 \frac{1}{(2\pi)^{n+1}}\int_{-a}^0
\tr\big( \tau(\lambda)\beta_\lambda( (\widetilde K_\z)_0)^*\big)
\, |\lambda|^{n+s}\, d\lambda.
\end{align*}

Since the above identities hold for all $\tau\in \cH^2_s([-a,0))$ it follows that 
$$
\beta_\lambda( (\widetilde K_\z)_0)=e^{\lambda h} \chi_{[-a,0)}(\lambda) |\lambda|^{-s}P_0\beta_\lambda[z,t].
$$
From the inversion formula \eqref{PW-D-eq3},
and arguing as in the
proof of \cite[Corollary 4.3]{AMPS} to compute $\tr\big(P_0
\beta_\lambda[z,t]  \beta_\lambda[w,s]^*\big)$, 
 we obtain that
\begin{align*}
K_\z (\omega) 
& = \frac{1}{(2\pi)^{n+1}} \int_{-a}^0
e^{\lambda  (h+k)}
\tr\big(P_0 \beta_\lambda[z,t]  \beta_\lambda[w,s]^*\big)\,
 |\lambda|^{n-s} d\lambda \\
& = \frac{1}{(2\pi)^{n+1}} \int_{-a}^0
e^{\lambda  (h+k +\frac14 |w-z|^2 +i (t-s -\frac12 \Im w\cdot\bar z))}
\,  |\lambda|^{n-s} d\lambda 
\end{align*}
and the conclusion  follows.
\end{proof}

\begin{remark}{\rm  In particular, in the case  $s=n$ the
    reproducing kernel 
$K(\omega,\z)$ of $\PW_a^n$ takes a more familiar expression, that
involves the $\sinc$ function, $\sinc z=\frac{\sin z}{z}$. Namely, 
$$
K(\omega,\zeta) = (2\pi)^{-n-1} a e^{-i a Q(\omega,\z)} \sinc \big(a Q(\omega,\z)\big) \,. 
$$
We also observe that
\begin{align*}
\| K_\z\|_{\PW_a^s}^2 & = \frac{1}{(2\pi)^{n+1}} 
\int_{-a}^0 e^{2h\lambda} \|\, |\lambda|^{-s}P_0\beta_\lambda[z,t] \|_{\HS}^2 \,
|\lambda|^{n+s} d\lambda = \frac{1}{(2\pi)^{n+1}} 
                        \int_{-a}^0   e^{2h\lambda} \, |\lambda|^{n-s} d\lambda  \\
  & \approx_{s,a} e^{2ah_-}\,.
\end{align*}
In particular, $\| K_\z\|_{\PW_a^s}\approx_{s,a}1$ for $\z\in b\cU$.}
\end{remark}

\section{A representation theorem for $\PW^s_a$}\label{sec:4}
In this section we  prove a representation theorem for functions in
$\PW^s_a$, for $s\in[0,n+1)$.
 We denote by $\sF$ the $1$-dimensional Euclidean Fourier transform, that is, for $f\in L^1(\bbR)$,
 $$
 \sF f(\xi)=\int_{\bbR} f(x)  e^{ix\xi} \, dx.
 $$ 
 We also write $\sF f =\widehat f$.
 Recall that $\sF$ extends to a surjective isomorphism $\sF: L^2(\bbR)\to  L^2(\bbR)$ where
$$ 
\|f\|_{L^2(\bbR)}^2=\frac{1}{2\pi}\|\sF f\|_{L^2(\bbR)}^2  
$$
and the inverse $\sF^{-1}$ is defined as
$$
\sF^{-1}f(x)=\frac 1{2\pi}\int_\bbR f(\xi)  e^{-i\xi x}  \, d\xi.
$$ 
We recall that if $f\in L^2$ and
$\widehat f$ has compact support, then $f$
extends to an entire function $F$ and
\begin{equation}\label{F-T-formula}
\sF\big(F(\cdot+iy)\big)(\lambda) =
\widehat{f}(\lambda)e^{y\lambda}.
\end{equation}

\begin{thm}\label{key2} 
Let $F\in \PW_a^s$, $0\le s<n+1$.   For $\z'\in\bbC^n$ fixed, define
$f_{\z'} (\kappa) = F(\z',\kappa)$, where $\kappa =x+iy\in\bbC$. Then
there exists $\phi:\bbC^n\times\bbR\to \bbC$ such that the function
$
\sF^{-1}  \phi(\z',\cdot)  (x)
$
extends to an entire function in the variable $\kappa$ and it holds that
\begin{equation}\label{identity}
F(\z',\z_{n+1}) =
\sF^{-1}  \phi(\z',\cdot)  (\z_{n+1}) \,,
\end{equation}
Moreover, the function $\phi$ satisfies the following:
\begin{itemize}
\item[(i)] $\phi(\cdot,\lambda) \in \cF^\lambda$ for a.e. $\lambda\in
  [-a,0)$;\smallskip 
\item[(ii)]  $\|\phi(\cdot,\lambda) \|_{\cF^\lambda} \in L^2 \big(
  [-a,0), |\lambda|^{s-n}d\lambda\big)$; \smallskip
\item[(iii)] 
  $ \displaystyle{ \| F\|_{\PW_a^s}^2
= (2\pi)^{n-1} \int_{-a}^0\|\phi(\cdot,\lambda)\|^2_{\cF^\lambda}\,
|\lambda|^{s-n}\, d\lambda}$.
\end{itemize}
If $s=n$ we also have
\begin{itemize}
 \item[(iv)] $\phi(\z',\cdot) \in
   L^2([-a,0))$ for all $\z'\in\bbC^n$; in particular $F(\z',\cdot)$
   belongs to the one-dimensional Paley--Wiener space $PW_a(\bbC)$ for all $\z'\in\bbC^{n}$. 
\end{itemize}
\end{thm}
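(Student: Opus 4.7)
The strategy is to combine the reconstruction formula of Theorem \ref{main-4} with the rank-one constraint from Lemma \ref{support-holomorphic-fcts}, and then repackage the resulting operator-valued scalar integral as a one-variable inverse Fourier transform in $\z_{n+1}$.

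By Theorem \ref{main-4},
\begin{equation*}
\widetilde F_h[z,t]=\frac{1}{(2\pi)^{n+1}}\int_{-a}^{0}e^{\lambda h}\tr\big(\beta_\lambda(\widetilde F_0)\beta_\lambda[z,t]^*\big)|\lambda|^n\,d\lambda,
\end{equation*}
while Lemma \ref{support-holomorphic-fcts} forces $\ran\beta_\lambda(\widetilde F_0)\subseteq\operatorname{span}\{e_0\}$ for a.e.~$\lambda<0$. Riesz representation then yields a measurable field $\lambda\mapsto\psi(\lambda):=\beta_\lambda(\widetilde F_0)^*e_0\in\cF^\lambda$ with $\beta_\lambda(\widetilde F_0)u=\la u,\psi(\lambda)\ra_{\cF^\lambda}e_0$ and $\|\psi(\lambda)\|_{\cF^\lambda}=\|\beta_\lambda(\widetilde F_0)\|_{\HS}$. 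For any such rank-one operator composed with $B^*$, the trace equals $\la e_0,B\psi\ra=\overline{(B\psi)(0)}$ by the reproducing property of $e_0$. Plugging in the explicit form \eqref{Barg-rep} for $\lambda<0$ at $w=0$ gives
\begin{equation*}
\tr\big(\beta_\lambda(\widetilde F_0)\beta_\lambda[z,t]^*\big)=e^{-i\lambda t+\frac{\lambda}{4}|z|^2}\overline{\psi(\lambda)(\bar z)}.
\end{equation*}

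Multiplying by $e^{\lambda h}$ and using $\Re\z_{n+1}=t$ and $\Im\z_{n+1}=h+\tfrac14|z|^2$, the three real exponents collapse into $-i\lambda\z_{n+1}$, so that
\begin{equation*}
F(\z',\z_{n+1})=\frac{1}{(2\pi)^{n+1}}\int_{-a}^{0}e^{-i\lambda\z_{n+1}}\,\overline{\psi(\lambda)(\bar z)}\,|\lambda|^n\,d\lambda,
\end{equation*}
which is precisely $\sF^{-1}\phi(\z',\cdot)(\z_{n+1})$ for the choice
\begin{equation*}
\phi(\z',\lambda):=\frac{|\lambda|^n}{(2\pi)^n}\overline{\psi(\lambda)(\bar z)}\chi_{[-a,0)}(\lambda).
\end{equation*}
The entire extension claim is then automatic from $F$ being entire to begin with.

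Properties (i)--(iii) follow by direct bookkeeping. The map $z\mapsto\overline{\psi(\lambda)(\bar z)}$ is holomorphic, and the change of variable $w=\bar z$ in the Fock weight (Lebesgue measure is conjugation-invariant) yields $\|\phi(\cdot,\lambda)\|_{\cF^\lambda}=(2\pi)^{-n}|\lambda|^n\|\psi(\lambda)\|_{\cF^\lambda}$; hence
\begin{equation*}
(2\pi)^{n-1}\!\int_{-a}^0\!\|\phi(\cdot,\lambda)\|_{\cF^\lambda}^2\,|\lambda|^{s-n}d\lambda=\frac{1}{(2\pi)^{n+1}}\!\int_{-a}^0\!\|\beta_\lambda(\widetilde F_0)\|_{\HS}^2|\lambda|^{n+s}d\lambda=\|F\|_{\PW_a^s}^2,
\end{equation*}
proving (iii), while (ii) is exactly the finiteness of this integral. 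For (iv), with $s=n$, the Fock reproducing-kernel bound $|\psi(\lambda)(\bar z)|\le\|\psi(\lambda)\|_{\cF^\lambda}e^{|\lambda||z|^2/4}$ together with $\lambda\in[-a,0)$ controls $\int_{-a}^0|\phi(\z',\lambda)|^2d\lambda$ by a finite constant times $\int_{-a}^0|\lambda|^{2n}\|\psi(\lambda)\|_{\cF^\lambda}^2 d\lambda<\infty$, so $\phi(\z',\cdot)\in L^2([-a,0))$ and the classical one-variable Paley--Wiener theorem lands $F(\z',\cdot)$ in $PW_a(\bbC)$. The main obstacle is essentially clerical: tracking the powers of $2\pi$ and $|\lambda|$ through the trace formula and the $z\mapsto\bar z$ isometry, and ensuring that $\psi(\lambda)=\beta_\lambda(\widetilde F_0)^*e_0$ inherits sufficient measurability from $\beta(\widetilde F_0)\in\cL^2_s$.
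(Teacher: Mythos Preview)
Your proof is correct and follows essentially the same line as the paper: both start from the reconstruction formula of Theorem \ref{main-4}, exploit the rank-one constraint on $\beta_\lambda(\widetilde F_0)$ to reduce the trace to a scalar, and collapse the exponentials into $e^{-i\lambda\z_{n+1}}$, arriving at the same function $\phi$.

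There is one genuine difference worth noting in the treatment of (iii). The paper computes $\|F\|_{\PW_a^s}^2$ by writing $\Delta^{s/2}\widetilde F_0=|T|^{s/2}\widetilde F_0$ via the spectral identity \eqref{spec-id-1}, applying one-dimensional Plancherel in $t$, and invoking a density argument ($\PW_a\cap\PW_a^s$ dense) to justify the manipulations. You instead go directly through the isometry $\|F\|_{\PW_a^s}=\|\beta(\widetilde F_0)\|_{\cL^2_s}$ already established in Theorem \ref{main-4}, together with the elementary identity $\|\phi(\cdot,\lambda)\|_{\cF^\lambda}=(2\pi)^{-n}|\lambda|^n\|\beta_\lambda(\widetilde F_0)\|_{\HS}$ coming from the rank-one structure and the conjugation isometry $z\mapsto\bar z$. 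This is cleaner: it bypasses both the spectral identity and the density step. For (iv), your reproducing-kernel pointwise bound and the paper's use of $\|P_0\beta_\lambda[\z',0]\|_{\HS}=1$ are two phrasings of the same estimate.
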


\proof
Observe that from Theorem \ref{main-4} it follows that
$\PW_a\cap \PW_a^s$ is dense in both spaces. Then, if $F\in \PW_a\cap
\PW_a^s$, 
the computations that follow are all justified.
By Theorem \ref{main-4} we have 
\begin{align*}
  F(\z',\z_{n+1}) & = \widetilde F_h [z,t]
 = \frac{1}{(2\pi)^{n+1}} \int_{-a}^0  e^{\lambda h} \tr \big( 
\beta_\lambda (\widetilde F_0) \beta_\lambda [z,t]^* \big)
|\lambda|^n\, d\lambda \\
& = \frac{1}{2\pi} \int_{-a}^0 \Big( \frac{|\lambda|}{2\pi} \Big)^n
\tr \big(  \beta_\lambda (\widetilde F_0) \beta_\lambda [z,0]^* \big)
e^{-i\lambda(t+ih)} \, d\lambda \,,\\
&= \frac{1}{2\pi} \int_{-a}^0 \Big( \frac{|\lambda|}{2\pi} \Big)^n
\tr \big(  \beta_\lambda (\widetilde F_0) \beta_\lambda [\z',0]^* \big)
e^{-\frac\lambda 4|\z'|^2}e^{-i\lambda\z_{n+1}} \, d\lambda  \,.
\end{align*} 
Therefore, setting
\begin{equation*}
\phi(\z',\lambda)  =   \Big(
\frac{|\lambda|}{2\pi} \Big)^n  \tr \big(  \beta_\lambda (\widetilde F_0)
\beta_\lambda [\z',0]^* \big) e^{-\frac\lambda 4|\z'|^2}\chi_{[-a,0)}(\lambda)
\,,
\end{equation*}
it follows that
$
F(\z',\z_{n+1}) =
\sF^{-1} \phi(\z',\cdot) (\z_{n+1}  ) $, 
that is, \eqref{identity} holds.
From \eqref{Barg-rep} we deduce that $\phi(\cdot,\lambda)$ is entire,
and by 
\eqref{identity}, using \eqref{F-T-formula} and \eqref{spec-id-1}, 
it follows that
\begin{align*}
\| F\|_{\PW_a^s}^2
& = \int_{\bbH_n} |\Delta^{s/2} \widetilde F_0 [z,t]|^2 \, dzdt \\
& = \int_{\bbH_n} |\, |T|^{s/2} \widetilde F_0 [z,t]|^2 \, dzdt \\
& = \int_{\bbC^n} \int_\bbR |\, |T|^{s/2} F (z,t+\textstyle{\frac i4} |z|^2)|^2 
\, dt\, dz \\
& = \frac{1}{2\pi} \int_{\bbC^n} \int_\bbR 
 |\sF\big( |T|^{s/2} F (z,\cdot +\textstyle{\frac i4} |z|^2) \big)(\lambda)|^2 
\, d\lambda \, dz \\
& = \frac{1}{2\pi} \int_{-a}^0 |\lambda|^s \int_{\bbC^n}\big|  \phi(z,\lambda)\big|^2 e^{-\frac{|\lambda|}{2}|z|^2 }\, dz \,
  d\lambda  \\
&=(2\pi)^{n-1} \int_{-a}^0\|\phi(\cdot,\lambda)\|^2_{\cF^\lambda}\, |\lambda|^{s-n}\, d\lambda.
\end{align*} 
The conclusions (i-iii) now follow. About (iv), if $P_0$ denotes the orthogonal projection onto the subspace generated by $e_0$, we have
\begin{align*}
|  \phi(\z',\lambda)|^2&=|\big( |\lambda|/(2\pi) \big)^n\tr \big(  \beta_\lambda (\widetilde F_0)
\beta_\lambda [\z',0]^* \big) e^{-\frac\lambda 4|\z'|^2}\chi_{[-a,0)}(\lambda)|^2\\
&=|\big( |\lambda|/(2\pi) \big)^n\tr \big(  \beta_\lambda (\widetilde F_0)
P_0\beta_\lambda [\z',0]^* \big) e^{-\frac\lambda 4|\z'|^2}\chi_{[-a,0)}(\lambda)|^2\\
&\leq \|\beta_\lambda(\widetilde F_0)\|^2_{\HS}\|P_0\beta_\lambda[\z',0]\|^2_{\HS}e^{-\frac\lambda 2|\z'|^2}|\big( |\lambda|/(2\pi) \big)^n\chi_{[-a,0)}(\lambda)|^2\\
&\leq e^{\frac a 2|\z'|^2} \|\beta_\lambda(\widetilde F_0)\|^2_{\HS}|\big( |\lambda|/(2\pi) \big)^n\chi_{[-a,0)}(\lambda)|^2
\end{align*}
where we used $\lambda<0$ and the identity $\|P_0\beta_\lambda [\z',0]\|^2_{\HS}=1$. Since $F\in \PW^n_a$ 
$$
\int_{-a}^0\|\beta_\lambda(\widetilde F_0)\|^2_{\HS}
\big(|\lambda|/(2\pi)\big)^{2n}\, d\lambda  <\infty, 
$$
and this completes the proof.
\epf

As a consequence, we have the following.   For $0\le s< n+1$ we set
\begin{multline*}
\Upsilon_s
= \Big\{ \phi: \bbC^n\times[-a,0)\to\bbC:\ (i)\,
\phi(\cdot,\lambda)\in\cF^\lambda \ \text{for a.e. } \lambda\in
[-a,0) ,\\
(ii)\, \|\phi\|^2_\Upsilon := (2\pi)^{n-1} \int_{-a}^0 \| \phi(\cdot,\lambda)\|_{\cF^\lambda}^2
|\lambda|^{s-n}\, d\lambda<\infty \Big\} \,.
\end{multline*}
\begin{cor}\label{unit-map-cor}
  For $0\le s<n+1$, 
  the mapping
  $$
\sU:  \PW^s_a\ni F\mapsto \sF \big( F_{|_{\Im \z_{n+1}=0}}\big) \in \Upsilon_s
  $$
  is a unitary map  (where $\sF$ denotes the Euclidean Fourier
  transform in the real part of the variable $\z_{n+1}$); in particular
  $$
  \|F\|_{\PW^s_a}^2
 =(2\pi)^{n-1}\int_{-a}^0\|\phi(\cdot,\lambda)\|^2_{\cF^\lambda}\, |\lambda|^{s-n}\, d\lambda.
  $$
\end{cor}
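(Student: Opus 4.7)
The plan is to deduce the corollary from Theorem \ref{key2} combined with the Paley--Wiener correspondence in Theorem \ref{main-4}. The isometric embedding part is essentially a restatement of Theorem \ref{key2}: for $F\in \PW_a^s$ the identity \eqref{identity} reads $F(\z',\z_{n+1})=\sF^{-1}\phi(\z',\cdot)(\z_{n+1})$ with the explicit $\phi$ built in the proof of that theorem, so Fourier inversion along the slice $\Im\z_{n+1}=0$ identifies $\phi=\sU F$. Conclusions (i) and (ii) of Theorem \ref{key2} then give $\sU F\in \Upsilon_s$, and (iii) gives the norm identity. Along the way one verifies $F(\z',\cdot)\in L^2(\bbR)$ for each fixed $\z'$, which follows from the pointwise Fock bound applied to the rank-one vector $\xi_\lambda$ in $\beta_\lambda(\widetilde F_0)=\langle\cdot,\xi_\lambda\rangle e_0$, together with the integrability of $|\lambda|^{n-s}$ at $0$ ensured by the hypothesis $s<n+1$.

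For surjectivity I would build an inverse to $\sU$ by passing through $\cH^2_s([-a,0))$. Any $\tau\in \cH^2_s$ has rank at most one for $\lambda<0$ and factors as $\tau(\lambda)f=\langle f,\xi_\lambda\rangle_{\cF^\lambda}e_0$ with $\|\tau(\lambda)\|_{\HS}=\|\xi_\lambda\|_{\cF^\lambda}$. A direct trace computation using the explicit Bargmann action \eqref{Barg-rep} for $\lambda<0$ collapses the formula for $\phi$ from the proof of Theorem \ref{key2} into the concrete form $\phi(\z',\lambda)=(|\lambda|/2\pi)^n\,\overline{\xi_\lambda(\bar\z')}\chi_{[-a,0)}(\lambda)$, so that $\tau\mapsto\phi$ is a unitary equivalence $\cH^2_s([-a,0))\to \Upsilon_s$. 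Its inverse is $\phi\mapsto \xi_\lambda(w):=(2\pi/|\lambda|)^n\overline{\phi(\bar w,\lambda)}$, which lies in $\cF^\lambda$ since $\psi\mapsto\overline{\psi(\bar\cdot)}$ preserves both holomorphy and the Fock norm. Given $\phi\in \Upsilon_s$, the corresponding $\tau$ produces $F\in \PW_a^s$ via Theorem \ref{main-4} with $\beta_\lambda(\widetilde F_0)=\tau(\lambda)$, and applying Theorem \ref{key2} back to this $F$ gives $\sU F=\phi$.

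The main technical point is the trace computation that collapses the abstract formula into the explicit rank-one form. It reduces to evaluating $\langle e_0,\beta_\lambda[\z',0]\xi_\lambda\rangle_{\cF^\lambda}=e^{\lambda|\z'|^2/4}\,\overline{\xi_\lambda(\bar\z')}$ for $\lambda<0$, whose Gaussian factor exactly cancels the $e^{-\lambda|\z'|^2/4}$ coming from the definition of $\phi$. Once this bookkeeping is done, the chain of unitary maps $\PW_a^s \cong \cH^2_s([-a,0)) \cong \Upsilon_s$ closes and the corollary follows at once.
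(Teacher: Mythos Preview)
Your isometry argument matches the paper's exactly: both simply invoke Theorem \ref{key2}. For surjectivity, however, you take a different route. The paper proceeds directly: given $\phi\in\Upsilon_s$, it sets $F(\z',\z_{n+1})=\sF^{-1}\phi(\z',\cdot)(\z_{n+1})$ and asserts without further detail that $F\in\PW_a^s$. You instead factor through $\cH^2_s([-a,0))$: from $\phi$ you build the rank-one field $\tau(\lambda)=\langle\cdot,\xi_\lambda\rangle e_0$ with $\xi_\lambda(w)=(2\pi/|\lambda|)^n\overline{\phi(\bar w,\lambda)}$, invoke Theorem \ref{main-4} to produce $F\in\PW_a^s$ with $\beta_\lambda(\widetilde F_0)=\tau(\lambda)$, and then recover $\sU F=\phi$ via Theorem \ref{key2}. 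Your route is longer but more transparent, since the paper's ``it is easy to see'' hides exactly the exponential-type and Sobolev checks that Theorem \ref{main-4} already packages; your approach makes the chain of unitaries $\PW_a^s\cong\cH^2_s([-a,0))\cong\Upsilon_s$ explicit. The trace computation $\langle e_0,\beta_\lambda[\z',0]\xi_\lambda\rangle=e^{\lambda|\z'|^2/4}\,\overline{\xi_\lambda(\bar\z')}$ and the conjugation isometry $\psi\mapsto\overline{\psi(\bar\cdot)}$ on $\cF^\lambda$ are both correct.

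One small caveat: your claim that $F(\z',\cdot)\in L^2(\bbR)$ for each fixed $\z'$, justified by the pointwise Fock bound together with integrability of $|\lambda|^{n-s}$, does not quite go through when $n<s<n+1$. The Fock bound gives $|\phi(\z',\lambda)|\le e^{|\lambda|\,|\z'|^2/4}\|\phi(\cdot,\lambda)\|_{\cF^\lambda}$, and Cauchy--Schwarz against $|\lambda|^{(n-s)/2}$ then yields $\phi(\z',\cdot)\in L^1([-a,0))$, not $L^2$. This $L^1$ bound is already enough for the pointwise Fourier inversion identifying $\phi=\sU F$, so nothing in your argument is actually damaged; just replace ``$L^2(\bbR)$'' by ``$\phi(\z',\cdot)\in L^1$'' (or note that the $L^2$ claim does hold for $0\le s\le n$, which covers the case $s=n$ singled out in Theorem \ref{key2}(iv)).
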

\proof
We only need to prove that the mapping is onto.  Given
$\phi\in\Upsilon_s$, setting
$F(\z',\z_{n+1}) =\sF^{-1}\phi(\z',\cdot)(\z_{n+1}) $
it is easy to see that $F\in\PW^s_a$ and the conclusion follows.
\qed
\ms

\section{Sampling in the Fock space}\label{sec:5}

In this section we prove a result, Theorem \ref{new-lem},  that it may be considered as {\em
  folklore}. We consider the $1$-dimensional case and 
make explicit the dependence on $\lambda$ of the sampling
constant in the case of square lattices for the Fock space $\cF^\lambda(\bbC)$.  However, we believe that
the result is not completely obvious and it is key for our Theorem
\ref{main-5}.    We recall that a square lattice $L_b$ in $\bbC$ is
sampling for $\cF^a(\bbC)$ if and only if $b>a$
(\cite{Seip,Seip-Wallsten}) and that the behavior of the sampling constants
as $b\to a^+$ are obtained in \cite{BGL}.

\begin{lem}\label{fock-b'}
 Let $a>0$ be given, let $b>a$ and let $L_b$ be the square lattice
 \eqref{square-lattice-b}.
 Let $f\in \mathcal F^\lambda$ with $0<\lambda\leq a$. Then, for any $\eta\in L_b$, the function
 $$
 F^\lambda_\eta(z):= e^{\frac\lambda2 z\overline\eta} f(z-\eta)
 $$
 belongs to the Fock space $\mathcal F^{b'}$ with $a<b'<b$.
\end{lem}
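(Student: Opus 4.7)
\textbf{Proof plan for Lemma \ref{fock-b'}.} The function $F^\lambda_\eta$ is clearly entire, so the task reduces to showing that
\[
 \|F^\lambda_\eta\|_{\cF^{b'}}^2
 = \frac{b'}{2\pi}\int_{\bbC} |F^\lambda_\eta(z)|^2\, e^{-\frac{b'}{2}|z|^2}\, dz
\]
is finite, and the natural strategy is to compare this integral with $\|f\|_{\cF^\lambda}^2$. My plan is to carry out the change of variables $w=z-\eta$ and then complete the square in the exponential so that the weight lines up with $e^{-\frac{\lambda}{2}|w|^2}$, which is the weight of $\cF^\lambda$.

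Explicitly, after writing $|F^\lambda_\eta(z)|^2 = |f(z-\eta)|^2 e^{\lambda \Re(z\bar\eta)}$ and substituting $w=z-\eta$, the exponent becomes
\[
 -\tfrac{b'}{2}|w+\eta|^2 + \lambda\,\Re\!\big((w+\eta)\bar\eta\big)
 = -\tfrac{\lambda}{2}|w|^2 - \tfrac{b'-\lambda}{2}\big[|w+\eta|^2-|\eta|^2\big] + \big(\lambda-\tfrac{b'}{2}\big)|\eta|^2.
\]
Since $\lambda\le a<b'$, the middle bracket is multiplied by a negative coefficient and is therefore $\le \tfrac{b'-\lambda}{2}|\eta|^2$. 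Combining the $|\eta|^2$ terms, one obtains the pointwise bound
\[
 |F^\lambda_\eta(z)|^2 e^{-\frac{b'}{2}|z|^2}
 \le e^{\frac{\lambda}{2}|\eta|^2}\, |f(w)|^2 e^{-\frac{\lambda}{2}|w|^2}\,,
\]
with $w=z-\eta$. Integrating in $w$ yields the clean estimate
\[
 \|F^\lambda_\eta\|_{\cF^{b'}}^2 \le C\,\frac{b'}{\lambda}\, e^{\frac{\lambda}{2}|\eta|^2}\, \|f\|_{\cF^\lambda}^2,
\]
which is finite for every fixed $\eta\in L_b$.

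There is no real obstacle here: the only subtlety is the correct sign bookkeeping when completing the square, and crucially the inequality $\lambda\le a<b'$ which ensures that the residual quadratic term in $w$ has the right sign to be dropped. The fact that the weight $e^{\frac{\lambda}{2}z\bar\eta}$ appearing in the definition of $F^\lambda_\eta$ is precisely the one tied to the Bargmann translation $\beta_\lambda[\eta,0]$ (see \eqref{Barg-rep}) is what makes the $|\eta|^2$ terms cancel cleanly, so the bound depends on $\eta$ only through the harmless Gaussian factor $e^{\frac{\lambda}{2}|\eta|^2}$.
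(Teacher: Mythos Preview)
Your argument is correct and is essentially identical to the paper's: both perform the translation $w=z-\eta$, rewrite the exponent as $-\tfrac{\lambda}{2}|w|^2 - \tfrac{b'-\lambda}{2}|w+\eta|^2 + \tfrac{\lambda}{2}|\eta|^2$, and drop the nonpositive middle term using $b'>\lambda$ to obtain the bound $\int |F^\lambda_\eta(z)|^2 e^{-\frac{b'}{2}|z|^2}\,dz \le e^{\frac{\lambda}{2}|\eta|^2}\int |f(w)|^2 e^{-\frac{\lambda}{2}|w|^2}\,dw$. The only difference is that the paper reaches the rearranged exponent in one line without displaying the intermediate completing-the-square identity.
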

\proof
We have
\begin{align*}
\int_\bbC |F^\lambda_\eta(z)|^2e^{-\frac {b'}2|z|^2}\, dz
&= \int_\bbC |e^{\frac\lambda 2(z+\eta)\overline{\eta}}f(z)|^2 e^{-\frac{b'}2|z+\eta|^2}\, dz\\
&=  e^{\frac\lambda 2|\eta|^2}\int_{\bbC} |f(z)|^2 e^{-\frac{b'-\lambda}{2}|z+\eta|^2}e^{-\frac\lambda 2|z|^2}\, dz\\
&\leq e^{\frac\lambda 2|\eta|^2}\int_\bbC |f(z)|^2 e^{-\frac\lambda 2|z|^2}\, dz
\end{align*}
and the conclusion follows.
\qed
\begin{lem}\label{schur}
Let $a>0$ be given, let $b>a$ and let $L_b$ be the square lattice \eqref{square-lattice-b}. For $0<\lambda\leq a$ define the positive measure
$$
\mu^\lambda_{L_b}:=\sum_{\gamma\in L_b}e^{-\frac\lambda 2|\gamma|^2}\delta_{\gamma}
$$
where $\delta_\gamma$ is the unit point measure at $\gamma$ and consider the integral operator
$$
f\mapsto Tf=\int_\bbC K_t(\cdot,\eta)f(\eta)\, d\mu^\lambda_{L_b}(\eta)
$$
with positive kernel 
$$
K_t(\gamma,\eta)=   e^{\frac\lambda4 (|\gamma|^2+|\eta|^2)}
e^{-\frac{t-\lambda}{4}|\gamma+\eta|^2} , \qquad a<t<b.
$$
Then, the operator $T$ extends to a bounded operator $T:L^2(L_b,
\mu^\lambda_{L_b})\to L^2(L_b, \mu^\lambda_{L_b})$ with operator norm
uniformly bounded for $0<\lambda\leq a$. 
\end{lem}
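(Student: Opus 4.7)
The natural approach is Schur's test with a Gaussian test function, tailored so that the $\lambda$-dependent weights in $K_t$ and $\mu^\lambda_{L_b}$ cancel out exactly, leaving a kernel whose $\ell^1$-norm is controlled by a lattice-Gaussian sum.

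\medskip

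First I would choose the test function $h(\eta)=e^{\frac{\lambda}{4}|\eta|^2}$ and compute
\begin{equation*}
\int_{\bbC} K_t(\gamma,\eta)\, h(\eta)\, d\mu^\lambda_{L_b}(\eta)
=\sum_{\eta\in L_b} e^{\frac{\lambda}{4}|\gamma|^2+\frac{\lambda}{4}|\eta|^2
-\frac{t-\lambda}{4}|\gamma+\eta|^2}\, e^{\frac{\lambda}{4}|\eta|^2}\, e^{-\frac{\lambda}{2}|\eta|^2}
=e^{\frac{\lambda}{4}|\gamma|^2}\sum_{\eta\in L_b}e^{-\frac{t-\lambda}{4}|\gamma+\eta|^2}.
\end{equation*}
The key observation is that since $\gamma\in L_b$, the translation $\eta\mapsto \eta+\gamma$ is a bijection of $L_b$, hence
\begin{equation*}
\sum_{\eta\in L_b}e^{-\frac{t-\lambda}{4}|\gamma+\eta|^2}
=\sum_{\eta'\in L_b}e^{-\frac{t-\lambda}{4}|\eta'|^2}=:C_\lambda,
\end{equation*}
which is \emph{independent} of $\gamma$. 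Thus $\int K_t(\gamma,\eta)h(\eta)\,d\mu^\lambda_{L_b}(\eta)=C_\lambda\,h(\gamma)$, and by the symmetry $K_t(\gamma,\eta)=K_t(\eta,\gamma)$ the analogous estimate with the roles of $\gamma$ and $\eta$ reversed holds with the same constant.

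\medskip

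Next I would bound $C_\lambda$ uniformly. Since $0<\lambda\le a<t$, we have $t-\lambda\ge t-a>0$, so
\begin{equation*}
C_\lambda\le \sum_{\eta\in L_b}e^{-\frac{t-a}{4}|\eta|^2}=:C_{t,a,b}<+\infty,
\end{equation*}
the right-hand side being a convergent lattice Gaussian sum depending only on $t-a$ and on the spacing $\sqrt{2\pi/b}$ of $L_b$. Schur's test on $L^2(L_b,\mu^\lambda_{L_b})$ then yields $\|T\|_{L^2(\mu^\lambda_{L_b})\to L^2(\mu^\lambda_{L_b})}\le C_\lambda\le C_{t,a,b}$, uniformly in $\lambda\in(0,a]$.

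\medskip

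There is essentially no obstacle: the crucial step is recognising that the Gaussian gauge factors in $K_t$ were designed so that after pairing with the natural test function $h(\eta)=e^{\frac{\lambda}{4}|\eta|^2}$ on the weighted space $(L_b,\mu^\lambda_{L_b})$, all $\lambda$-weights in the exponent collapse onto $-\frac{t-\lambda}{4}|\gamma+\eta|^2$, whose dependence on $\gamma$ is then killed by the lattice invariance $\gamma+L_b=L_b$. Equivalently, the gauge substitution $\tilde g(\gamma)=g(\gamma)e^{-\frac{\lambda}{4}|\gamma|^2}$ conjugates $T$ to an operator on $\ell^2(L_b)$ whose kernel is simply $e^{-\frac{t-\lambda}{4}|\gamma+\eta|^2}$, to which $h\equiv 1$ in Schur's test applies immediately. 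Either viewpoint delivers the uniform bound.
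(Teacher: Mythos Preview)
Your proof is correct and follows exactly the same route as the paper: Schur's test with the test function $\varphi(\gamma)=e^{\frac{\lambda}{4}|\gamma|^2}$. You actually supply more detail than the paper does---the paper simply states that with this choice of $\varphi$ ``the conclusion follows with a constant $C$ independent of $\lambda$,'' whereas you make explicit the lattice-invariance step $\gamma+L_b=L_b$ and the uniform bound $C_\lambda\le\sum_{\eta\in L_b}e^{-\frac{t-a}{4}|\eta|^2}$.
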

\proof
For $\gamma\in L_b$ we have
$$
Tf(\gamma)= \sum_{\eta\in L_b} K_t(\gamma,\eta)f(\eta) e^{-\frac\lambda 2 |\eta|^2}.
$$
By Schur's test \cite[Appendix A.2]{Grafakos} it is enough to find $\varphi>0$ and $C>0$ such that 
$$
\sum_{\eta\in L_b} K_t(\gamma,\eta)\varphi(\eta) e^{-\frac\lambda 2 |\eta|^2}\leq C\varphi(\gamma).
$$
This would also guarantee that the operator norm of $T$ is bounded by $C$. Choosing $\varphi(\gamma)= e^{\frac\lambda 4|\gamma|^2}$ the conclusion follows with a constant $C$ independent of $\lambda$ as we wished to show.
\qed

\begin{thm}\label{new-lem}
  Let $a>0$ be given, let $b>a$ and let $L_b$ be the square lattice
 \eqref{square-lattice-b}. 
 Then there exist constants $A,B>0$ 
such that for all $0<\lambda\le a$ and all $f\in\cF^\lambda(\bbC)$ we have
$$
A \lambda \sum_{\gamma\in L_b} |f(\gamma)|^2
e^{-\frac\lambda2 |\gamma|^2} \le \| f\|_{\cF^\lambda}^2 \le  B \lambda \sum_{\gamma\in L_b} |f(\gamma)|^2
e^{-\frac\lambda2 |\gamma|^2} .
$$
\end{thm}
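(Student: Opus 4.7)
The plan is as follows. Fix an intermediate $b'\in(a,b)$, e.g.\ $b'=(a+b)/2$. For $f\in\cF^\lambda$ with $0<\lambda\le a<b'<b$, Lemma \ref{fock-b'} guarantees that $F^\lambda_\eta(z)=e^{(\lambda/2)z\bar\eta}f(z-\eta)\in \cF^{b'}$ for every $\eta\in L_b$. The strategy is to apply the classical Fock-space sampling theorem on $\cF^{b'}$ with respect to the lattice $L_b$ (valid since $b>b'$, with constants depending only on $b$ and $b'$) to each $F^\lambda_\eta$, then weight by $e^{-(\lambda/2)|\eta|^2}$ and sum over $\eta\in L_b$. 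The uniformity in $\lambda\in(0,a]$ will reduce to the single observation that $b'-\lambda\ge b'-a>0$.

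The first key identity I need is that for each $\gamma\in L_b$,
\begin{equation*}
\sum_{\eta\in L_b}|F^\lambda_\eta(\gamma)|^2 e^{-(\lambda/2)|\eta|^2}=e^{(\lambda/2)|\gamma|^2}\,S,\qquad S:=\sum_{\gamma\in L_b}|f(\gamma)|^2 e^{-(\lambda/2)|\gamma|^2},
\end{equation*}
which follows from $\Re(\gamma\bar\eta)=\tfrac12(|\gamma|^2+|\eta|^2-|\gamma-\eta|^2)$ and the substitution $\eta'=\gamma-\eta\in L_b$. The second key computation, obtained via the change of variable $w=z-\eta$ in the integral defining $\|F^\lambda_\eta\|^2_{\cF^{b'}}$ and completing the square in $\eta$, yields
\begin{equation*}
\sum_{\eta\in L_b}\|F^\lambda_\eta\|^2_{\cF^{b'}}\,e^{-(\lambda/2)|\eta|^2}=\frac{b'}{2\pi}\int_{\bbC}|f(w)|^2 e^{-(\lambda/2)|w|^2}\sum_{\eta\in L_b}e^{-\frac{b'-\lambda}{2}|\eta+w|^2}\,dw.
\end{equation*}
The inner Gaussian lattice sum is uniformly bounded above and below by positive constants depending only on $a,b,b'$ (lower bound from the nearest lattice point, upper bound by comparison with a Gaussian integral), so the left-hand side is comparable to $\lambda^{-1}\|f\|^2_{\cF^\lambda}$ with uniform constants.

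To conclude, I apply the $\cF^{b'}$-sampling inequality to each $F^\lambda_\eta$, multiply by $e^{-(\lambda/2)|\eta|^2}$, sum over $\eta$, and swap the order of summation on the right-hand side using the first key identity:
\begin{equation*}
\sum_{\eta}\sum_{\gamma}|F^\lambda_\eta(\gamma)|^2 e^{-(b'/2)|\gamma|^2}e^{-(\lambda/2)|\eta|^2}=S\sum_{\gamma\in L_b}e^{-\frac{b'-\lambda}{2}|\gamma|^2}\asymp S,
\end{equation*}
again uniformly in $\lambda$. Combining this with the second key computation yields $\lambda^{-1}\|f\|^2_{\cF^\lambda}\asymp S$, which is exactly the statement. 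The main obstacle is not finding the argument but keeping constants uniform in $\lambda$; this reduces to the two-sided uniform boundedness of the Gaussian lattice sums weighted by $(b'-\lambda)/2$, which is guaranteed by the gap $b'-a>0$. An alternative route, perhaps closer in spirit to the authors' use of Lemma \ref{schur}, would bypass the $\cF^{b'}$-sampling step by expanding each $F^\lambda_\eta$ via the Weierstrass $\sigma$-decomposition \eqref{decom-sigma-fct:eq} and invoking the Schur bound for the kernel $K_t$ to control the resulting double sums directly on $L^2(L_b,\mu^\lambda_{L_b})$.
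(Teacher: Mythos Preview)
Your argument is correct and genuinely different from the paper's. You reduce the $\lambda$-uniform sampling to a \emph{single} application of the classical sampling theorem in the fixed space $\cF^{b'}$ (with $a<b'<b$), applied to the translates $F^\lambda_\eta$, and then average over $\eta$ with the Gaussian weight $e^{-(\lambda/2)|\eta|^2}$. Your two key computations are right: the first identity comes cleanly from $\Re(\gamma\bar\eta)=\tfrac12(|\gamma|^2+|\eta|^2-|\gamma-\eta|^2)$ and the group structure of $L_b$; in the second, the change of variable and completion of the square give the factor $e^{-\frac{b'-\lambda}{2}|\eta+w|^2}$, and the resulting theta-type sum is uniformly two-sided bounded precisely because $b'-\lambda\in[b'-a,\,b']$ stays in a compact interval bounded away from $0$. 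Both inequalities then fall out simultaneously.

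The paper proceeds differently and asymmetrically. For the bound $\|f\|_{\cF^\lambda}^2\le B\lambda S$ it tiles $\bbC$ by translates of the fundamental domain of $L_b$, expands each $F^\lambda_\eta$ on $R_{L_b}$ via the Weierstrass $\sigma$-interpolation formula \eqref{decom-sigma-fct:eq}, and controls the resulting double sum by the Schur-test Lemma~\ref{schur} with the explicit kernel $K_t$. For the reverse bound $A\lambda S\le \|f\|_{\cF^\lambda}^2$ it proves a pointwise mean-value inequality \eqref{mean-value-ineq} and sums over disjoint disks, tracking the $\lambda$-dependence through the factor $\lambda/(1-e^{-\lambda r^2/2})$. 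Your route is shorter and more symmetric, at the price of invoking the Seip--Wallst\'en sampling theorem for $\cF^{b'}$ as a black box; the paper's route is more self-contained and actually uses the two preparatory lemmas (\ref{fock-b'} and \ref{schur}) set up for the purpose, whereas you need only Lemma~\ref{fock-b'}. Your closing remark accurately identifies the paper's approach as the ``alternative route'' via the $\sigma$-decomposition and Lemma~\ref{schur}.
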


\proof
Let $0<\lambda\leq a$ and let $f\in \cF^\lambda$ be given. Let $R_{L_b}$ be the fundamental region of the square lattice $L_b$ and let $R_{L_b, \eta}$ be the translated region $R_{L_b}+\eta$ where $\eta\in L_b$. Then, $\bbC=\bigcup_{\eta\in L_b}R_{L_\lambda,\eta}$ and the intersections
of the $R_{L_\lambda,\eta}$'s have Lebesgue measure zero. Therefore,
\begin{align}
 \| f\|_{\cF^\lambda}^2
& = \frac{\lambda}{2\pi} 
\sum_{\eta\in L_b} \int_{R_{L_b,\eta}} |f(z)|^2e^{-\frac{\lambda}{2}|z|^2}\, dz \notag\\
&=\frac{\lambda}{2\pi}\sum_{\eta\in L_b}\int_{R_{L_b}}|f(z-\eta)|^2e^{-\frac\lambda 2|z-\eta|^2}\, dz \notag\\
&=\frac{\lambda}{2\pi}\sum_{\eta\in L_b} e^{-\frac\lambda 2|\eta|^2}\int_{R_{L_b}}|e^{\frac\lambda 2 z\overline{\eta}}f(z-\eta)|^2 e^{-\frac\lambda 2|z|^2}\, dz.
\end{align}
In particular the factor $e^{-\frac \lambda 2 |z|^2}$ is bounded above and below on the region $R_{L_b}$ with positive constants uniformly on $z$ and $\lambda$. Hence, we conclude that 
\begin{equation}
\|f\|^2_{\cF^\lambda}\approx\lambda\sum_{\eta\in L_b} e^{-\frac\lambda 2|\eta|^2}\int_{R_{L_b}}|e^{\frac\lambda 2 z\overline{\eta}}f(z-\eta)|^2\, dz,
\end{equation}
that is, the two quantities are comparable up to some positive constants which do not depend on $\lambda$.

Now, setting $F^\lambda_{\eta}(z)= e^{\frac\lambda 2z\overline{\eta}}f(z-\eta)$, Lemma \ref{fock-b'} guarantees the decomposition
$$
F^\lambda_{\eta}(z)=\sum_{\gamma\in L_b}\frac{F^\lambda_{\eta}(\gamma)}{\sigma'_{L_b}(\gamma)}\frac{\sigma_{L_b}(z)}{z-\gamma}.
$$

Since
$\Big|\frac{\sigma_{L_b}(z)}{z-\gamma}\Big|\leq C$
for a constant $C$ which does not depend on $z\in  R_{L_b}$ and $\gamma\in L_b$, we have
\begin{align*}
  \|f\|^2_{\cF^\lambda}
  &  \le C 
  \lambda\sum_{\eta\in L_b} e^{-\frac\lambda 2|\eta|^2}\int_{R_{L_b}}|e^{\frac\lambda 2 z\overline{\eta}}f(z-\eta)|^2\, dz\\
 &\leq C \lambda \sum_{\eta\in L_b} e^{-\frac\lambda 2 |\eta|^2}\int_{R_{L_b}}\bigg(\sum_{\gamma\in L_b}\Big|\frac{F^\lambda_\eta(\gamma)}{\sigma'_{L_b}(\gamma)}\Big|\bigg)^2\, dz\\
 &\leq C \frac{2\pi \lambda}{b}\sum_{\eta\in L_b} e^{-\frac\lambda2|\eta|^2}\bigg(\sum_{\gamma\in L_b}\big|F^\lambda_\eta(\gamma)e^{-\frac t4|\gamma|^2}\big|\bigg)^2
\end{align*}
where $a<t<b$ and we used the estimate \eqref{sigma'-estimate}. Now,
\begin{align*}
 \frac{2\pi\lambda}{b}\sum_{\eta\in L_b} e^{-\frac\lambda2|\eta|^2}\bigg(\sum_{\gamma\in L_b}\big|F^\lambda_\eta(\gamma)e^{-\frac t4|\gamma|^2}\big|\bigg)^2&=\sum_{\eta\in L_b} e^{-\frac\lambda2|\eta|^2}\bigg(\sum_{\gamma\in L_b}\big|e^{\frac\lambda 2|\eta|^2+\frac\lambda2\gamma\overline\eta-\frac t4|\gamma+\eta|^2}f(\gamma)\big|\bigg)^2\\
 &=\frac{2\pi\lambda}{b}\sum_{\eta\in L_b}e^{-\frac\lambda 2|\eta|^2}\bigg(\sum_{\gamma\in L_b}K_t(\gamma,\eta)|f(\gamma)|e^{-\frac\lambda 2|\gamma|^2}\bigg)^2,
\end{align*}
where we have set 
$$
K_t(\gamma,\eta)= e^{\frac\lambda4
  (|\gamma|^2+|\eta|^2))}e^{-\frac{t-\lambda}{4}|\gamma+\eta|^2}. \
$$
 Thus, from Lemma \ref{schur} we get
$$
\sum_{\eta\in L_b}e^{-\frac\lambda 2|\eta|^2}\bigg(\sum_{\gamma\in L_b}K_t(\gamma,\eta)|f(\gamma)|e^{-\frac\lambda 2|\gamma|^2}\bigg)^2\leq C\sum_{\eta\in L_b}|f(\eta)|^2e^{-\frac\lambda 2|\eta|^2}
$$
where $C$ does not depend on $\lambda$. In conclusion, we have
\begin{align*}
\|f\|^2_{\mathcal F^\lambda}&\leq B\lambda \sum_{\eta\in L_b}e^{-\frac\lambda 2|\eta|^2}\bigg(\sum_{\gamma\in L_b}\big|F^\lambda_\eta(\gamma) e^{-\frac t4|\gamma|^2}\big|\bigg)^2\leq B\lambda \sum_{\eta\in L_b}|f(\eta)|^2 e^{-\frac\lambda 2|\eta|^2}
\end{align*}
with $B$ independent of $\lambda$ as we wished to show.

Next, denoting by $D(\gamma,r)$ the
disk centered at $\gamma\in\bbC$ with radius $r>0$,  we show that for
all $f\in\Hol(\bbC)$ and $d>0$ we  have
\begin{equation}\label{mean-value-ineq}
  |f(\gamma)|^2 e^{-d|\gamma|^2}
  \le 
  \frac{d}{\pi\big(1-e^{-dr^2}\big)}
  \int_{D(\gamma,r)} |f(w)|^2 e^{-d|w|^2}\, dw \,.
\end{equation}
For, by the mean value formula we have that
$$
|f(\gamma)|^2 \le \frac{1}{2\pi} \int_0^{2\pi}
|f(\gamma+re^{i\theta})|^2\,d\theta \,,
$$
so that,
\begin{align*}
 \int_{D(\gamma,r)} |f(w)|^2 e^{-d|w|^2}\, dw
& = \int_{D(0,r)} |f(w+\gamma)|^2 e^{-d|w+\gamma|^2}\, dw\\
& = \int_{D(0,r)} \big|f(w+\gamma) e^{-dw\overline{\gamma}} \big|^2 e^{-d(|w|^2+|\gamma|^2)}\, dw\\
  & = \int_0^r  e^{-d(s^2+|\gamma|^2)}
\int_0^{2\pi}
    \big|f( se^{i\theta}+\gamma) e^{-dse^{i\theta}\overline{\gamma}} \big|^2
    \,d\theta sds\\
  & \ge 2\pi  |f(\gamma)|^2 e^{-d|\gamma|^2}
    \int_0^r e^{-ds^2} \, sds\\
  & = \frac{\pi}{d}\big(1-e^{-dr^2} \big) |f(\gamma)|^2 e^{-d|\gamma|^2}
  ,
\end{align*}
and \eqref{mean-value-ineq} follows.  Finally, given the lattice
$L_b$, we let $0<r<\inf\{|\gamma_1-\gamma_2|:\, \gamma_1,\gamma_2\in L_b\}$.
Then, the disks $\{D(\gamma,r):\, \gamma\in L_b\}$ are disjoint so
that, by \eqref{mean-value-ineq} we have
\begin{align*}
\lambda \sum_{\gamma\in L_b} |f(\gamma)|^2
  e^{-\frac\lambda2 |\gamma|^2}
  & \le \frac{\lambda}{\big(1-e^{-\frac\lambda2 r^2}\big)}
\sum_{\gamma\in L_b} \frac{\lambda}{2\pi}
  \int_{D(\gamma,r)} |f(w)|^2 e^{-\frac\lambda2 |w|^2}\, dw \\
& \le   \sup_{\lambda\in(0,a]} \frac{\lambda}{\big(1-e^{-
      \frac\lambda2 r^2}\big)}  \| f\|_{\cF^\lambda}^2, 
\end{align*}
and the conclusion follows with $A= \bigg( \sup_{\lambda\in(0,a]} \frac{\lambda}{\big(1-e^{-
      \frac\lambda2 r^2}\big)}  \bigg)^{-1}$.
\epf

The following result now follows easily.
\begin{cor}\label{n-dim-version}
 For $j=1,\ldots,n$ let $a>0$ be given, let $b_j>a$ and set
 $L_{(b_1,\dots,b_n)} =L_{b_1}\times\cdots\times L_{b_n}$.  Then, there exist constants $A,B>0$
such that for all $\lambda\in (0,a]$  and $f\in\cF^\lambda(\bbC^n)$ we have
$$
A \lambda ^n \sum_{\gamma\in L_{(b_1,\dots,b_n)}} |f(\gamma)|^2
e^{-\frac{\lambda}{2}{|\gamma|^2}} \le \| f \|_{\cF^\lambda}^2
\le B \lambda ^n \sum_{\gamma\in L_{(b_1,\dots,b_n)}} |f(\gamma)|^2
e^{-\frac{\lambda}{2}{|\gamma|^2}} \,.
$$
\end{cor}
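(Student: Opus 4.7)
The strategy is to iterate Theorem~\ref{new-lem} one coordinate at a time, exploiting the product structure of both the Fock norm and the lattice $L_{(b_1,\dots,b_n)}=L_{b_1}\times\cdots\times L_{b_n}$. Let $A_0, B_0>0$ denote the (universal, $\lambda$-independent) constants provided by Theorem~\ref{new-lem}. Write $z=(z_1,z')$ with $z'=(z_2,\dots,z_n)\in\bbC^{n-1}$, and observe the factorization
\[
\|f\|_{\cF^\lambda(\bbC^n)}^2 = \Bigl(\tfrac{\lambda}{2\pi}\Bigr)^{n-1} \int_{\bbC^{n-1}} e^{-\frac\lambda2|z'|^2} \Bigl[\tfrac{\lambda}{2\pi}\int_\bbC |f(z_1,z')|^2 e^{-\frac\lambda2|z_1|^2}\,dz_1\Bigr]\,dz'.
\]

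By Fubini, for a.e.\ $z'$ the slice $z_1\mapsto f(z_1,z')$ is holomorphic and lies in $\cF^\lambda(\bbC)$, so Theorem~\ref{new-lem} applies in the variable $z_1$ with constants $A_0,B_0$ uniform in $\lambda\in(0,a]$. Using Tonelli (all summands non-negative) to interchange the sum over $\gamma_1\in L_{b_1}$ with the integral in $z'$, I obtain
\[
A_0\lambda\sum_{\gamma_1\in L_{b_1}} e^{-\frac\lambda2|\gamma_1|^2}\|f(\gamma_1,\cdot)\|_{\cF^\lambda(\bbC^{n-1})}^2
\le \|f\|_{\cF^\lambda(\bbC^n)}^2
\le B_0\lambda\sum_{\gamma_1\in L_{b_1}} e^{-\frac\lambda2|\gamma_1|^2}\|f(\gamma_1,\cdot)\|_{\cF^\lambda(\bbC^{n-1})}^2,
\]
with constants independent of $\lambda\in(0,a]$.

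I then iterate: for each $\gamma_1\in L_{b_1}$ the slice $f(\gamma_1,\cdot)$ is an entire function on $\bbC^{n-1}$, and one repeats the argument on the variable $z_2$ against the lattice $L_{b_2}$, then on $z_3$ against $L_{b_3}$, and so on. After $n$ steps the factors $A_0$ and $B_0$ compound multiplicatively, and the iterated sums over lattice points collapse into a single sum over $\gamma=(\gamma_1,\dots,\gamma_n)\in L_{(b_1,\dots,b_n)}$, yielding
\[
A_0^n\lambda^n\sum_{\gamma\in L_{(b_1,\dots,b_n)}}|f(\gamma)|^2 e^{-\frac\lambda2|\gamma|^2}
\le \|f\|_{\cF^\lambda(\bbC^n)}^2
\le B_0^n\lambda^n\sum_{\gamma\in L_{(b_1,\dots,b_n)}}|f(\gamma)|^2 e^{-\frac\lambda2|\gamma|^2}.
\]
Setting $A=A_0^n$ and $B=B_0^n$ gives the claimed inequality with constants independent of $\lambda\in(0,a]$.

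There is no real obstacle here: the only technical point is justifying that the iteration is legitimate for every intermediate slice. For the lower bound this is immediate from Fubini when $\|f\|_{\cF^\lambda}<\infty$; for the upper bound the non-negativity of every term makes Tonelli's theorem applicable without any integrability hypothesis, and the case in which the right-hand side is infinite is trivial. The uniformity of the constants in $\lambda\in(0,a]$, which is the delicate content of Theorem~\ref{new-lem}, is preserved through the $n$ iterations since at each step $A_0,B_0$ do not depend on $\lambda$.
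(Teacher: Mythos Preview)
Your argument is correct and is exactly the iteration the paper has in mind; the paper itself gives no details beyond ``now follows easily.'' One cosmetic point: the constants $A_0,B_0$ from Theorem~\ref{new-lem} depend on the lattice parameter $b$, so since the $b_j$ may differ you should write the final constants as $A=\prod_{j=1}^n A_j$ and $B=\prod_{j=1}^n B_j$ rather than $A_0^n,B_0^n$; this does not affect the $\lambda$-uniformity.
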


 We remark that the main point in Theorem \ref{new-lem} and Corollary \ref{n-dim-version}
is to estimate the dependence of the constants as $\lambda$ approaches
$0$.  In fact, if we restrict the parameter $\lambda$ to vary in a
compact interval $[\delta,a]$ for some $0<\delta<a<b$, then we can
replace the square lattice $L_b$ by a general sampling sequence for
$\cF^a$.  In~\cite{Seip} and~\cite{Seip-Wallsten} it was shown that a sequence $Z\subseteq\bbC$ is a sampling
sequence for $\cF^a$ if and only if $Z$ 
is the union of finitely many separated sequences and $Z$
contains a separated subsequence $Z'$ such that $D_-(Z') > a/(2\pi)$; see
also~\cite{Zhu}.  
Here
$$
D_- (Z') = \lim_{r\to\infty} \inf_{w\in\bbC}
\frac{\#\big(D(w,r)\cap Z'\big)}{\pi r^2}\,,
$$
and $Z'$ is said to be separated if
$$
\inf_{w,w'\in Z'} |w-w'|\ge c_{Z'}>0 \,.
$$

In fact, the following result holds.
\begin{prop}\label{referee-prop}
  Let $0<\delta<a$ be given.  
  For $j=1,\ldots,n$  let $Z_j$ be a sampling sequence for
  $\cF^a(\bbC)$ and let $Z=Z_{(1,\dots,n)}:=Z_1\times\cdots\times Z_n$ and let
 $b:=2\pi\min_{j=1,\dots,n}D_-(Z_j)>a$.
 Then, there exist constants $A,B>0$ depending only on $\delta,b$ and
 the separation constants of $Z_j$, $j=1,\dots,n$, 
such that for all $\lambda\in [\delta,a]$  and $f\in\cF^\lambda(\bbC^n)$ we have
$$
  A\lambda ^n \sum_{\gamma\in Z} |f(\gamma)|^2
e^{-\frac{\lambda}{2}{|\gamma|^2}} \le \| f \|_{\cF^\lambda}^2
\le B \lambda ^n \sum_{\gamma\in Z} |f(\gamma)|^2
e^{-\frac{\lambda}{2}{|\gamma|^2}}\,.
$$
\end{prop}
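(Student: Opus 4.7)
The $n$-dimensional statement reduces to the one-dimensional one by the same tensor-product argument already used to deduce Corollary \ref{n-dim-version} from Theorem \ref{new-lem}: one identifies $\cF^\lambda(\bbC^n)$ with the Hilbert tensor product $\bigotimes_{j=1}^n\cF^\lambda(\bbC)$, observes that on rank-one tensors the sampling operator $f\mapsto\{f(\gamma)e^{-\lambda|\gamma|^2/4}\}_{\gamma\in Z}$ factorizes as the tensor product of the one-dimensional sampling operators, and concludes (using the multiplicativity of the spectrum of a tensor product of commuting positive operators) that upper and lower frame bounds combine multiplicatively. It thus suffices to treat $n=1$ for a single sequence $Z\subseteq\bbC$ that is sampling for $\cF^a(\bbC)$ and satisfies $D_-(Z)>a/(2\pi)$; I then split the target double inequality into its two halves.

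For the \emph{left inequality} $A\lambda\sum_{\gamma\in Z}|f(\gamma)|^2 e^{-\lambda|\gamma|^2/2}\le\|f\|_{\cF^\lambda}^2$ I use only separation, not sampling. By Seip--Wallsten, $Z$ can be written as a finite disjoint union of separated subsequences $Z^{(i)}$ with separations $c_i>0$. For each such piece I apply the sub-mean-value inequality \eqref{mean-value-ineq} with $d=\lambda/2$ and $r<c_i/2$, sum over the resulting disjoint disks, and use the identity $\frac{\lambda}{2\pi}\int_{\bbC}|f(w)|^2 e^{-\lambda|w|^2/2}\,dw=\|f\|_{\cF^\lambda}^2$, which yields
\[
\lambda\sum_{\gamma\in Z^{(i)}}|f(\gamma)|^2 e^{-\lambda|\gamma|^2/2}\le\frac{\lambda}{1-e^{-\lambda r^2/2}}\|f\|_{\cF^\lambda}^2.
\]
The factor $\lambda/(1-e^{-\lambda r^2/2})$ is continuous and bounded on $[\delta,a]$, so adding the finitely many pieces gives the left inequality with a constant depending only on $\delta$ and the separations of $Z$.

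For the \emph{right inequality} $\|f\|_{\cF^\lambda}^2\le B\lambda\sum_{\gamma\in Z}|f(\gamma)|^2 e^{-\lambda|\gamma|^2/2}$, which is the genuine sampling direction, I transfer the problem to $\cF^a$ via the unitary dilation
\[
U_\lambda\colon\cF^\lambda(\bbC)\to\cF^a(\bbC),\qquad (U_\lambda f)(w)=f\bigl(\sqrt{a/\lambda}\,w\bigr),
\]
for which a direct change of variables gives $\|U_\lambda f\|_{\cF^a}=\|f\|_{\cF^\lambda}$ and $\sum_{\gamma\in Z}|f(\gamma)|^2 e^{-\lambda|\gamma|^2/2}=\sum_{\eta\in W_\lambda}|U_\lambda f(\eta)|^2 e^{-a|\eta|^2/2}$, where $W_\lambda:=\sqrt{\lambda/a}\,Z$. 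Hence the right inequality is equivalent to $\|g\|_{\cF^a}^2\le B\lambda\sum_{\eta\in W_\lambda}|g(\eta)|^2 e^{-a|\eta|^2/2}$ for all $g\in\cF^a$. For $\lambda\in[\delta,a]$ the sequence $W_\lambda$ has separation constants bounded below by $\sqrt{\delta/a}$ times those of $Z$ and lower Beurling density $D_-(W_\lambda)=(a/\lambda)D_-(Z)\ge D_-(Z)>a/(2\pi)$. Thus $\{W_\lambda\}_{\lambda\in[\delta,a]}$ is a family of sampling sequences for $\cF^a$ with uniformly controlled separation and uniformly controlled density gap $2\pi D_--a$. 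A quantitative form of the Seip--Wallsten theorem then furnishes sampling constants uniform in $\lambda$, yielding the desired $B$ depending only on $\delta$, $b$ and the separation constants of $Z$.

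The main obstacle is this last step, namely the \emph{quantitative} Seip--Wallsten: that the best sampling constants for $\cF^a$ can be controlled from below purely in terms of the minimum separation and the density gap $2\pi D_- -a>0$. This is not made explicit in \cite{Seip,Seip-Wallsten}, but can be obtained either by tracking constants through the original proofs, by a stability/perturbation argument for frames of normalized reproducing kernels $\{k_\eta^a\}$, or self-contained by emulating the proof of Theorem \ref{new-lem}---with the Weierstrass $\sigma$-function of $L_b$ replaced by an entire generating function attached to a separated subsequence of $W_\lambda$ of density exceeding $a/(2\pi)$, satisfying analogues of the pointwise estimates (i)--(iii) of Section \ref{Basic-facts-sec}.
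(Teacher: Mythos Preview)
Your proposal is correct and, at its core, relies on the same quantitative input as the paper: that the sampling constants in the Seip--Wallsten theorem can be controlled in terms of the separation constant and the density gap alone. The paper's proof is much terser than yours---it simply reduces to $n=1$, asserts (citing \cite{Seip,Seip-Wallsten,Zhu}) that the fixed sequence $Z$ is sampling for each $\cF^\lambda$, $\lambda\in[\delta,a]$, with constants $A',B'$ depending only on the separation of $Z$ and on $D_-(Z)$, and then absorbs the harmless factor $\lambda\in[\delta,a]$ into the constants. Your route differs only cosmetically: rather than fixing $Z$ and varying the space $\cF^\lambda$, you use the unitary dilation $U_\lambda$ to fix the space $\cF^a$ and vary the sequence $W_\lambda=\sqrt{\lambda/a}\,Z$; these two viewpoints are interchangeable precisely because $U_\lambda$ intertwines them. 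Your separate, self-contained treatment of the left inequality via \eqref{mean-value-ineq} is accurate and more explicit than the paper's blanket citation, and your honest flagging of the quantitative Seip--Wallsten step as the one nontrivial black box is exactly right---the paper takes this for granted.
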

\proof
This follows from the arguments in \cite{Seip,Seip-Wallsten} (see
also~\cite[Chapter4]{Zhu}).
Notice that we may assume that $n=1$ and then that $Z$ is separated.  Then,
$Z$ is a sampling sequence for $\cF^\lambda(\bbC)$, for
any $\lambda\in[\delta, a]$ with constants $A',B'>0$ that depend only on the
separation constant of $Z$ and on $D_-(Z)$ such that
$$
 A'\sum_{\gamma\in Z} |f(\gamma)|^2
e^{-\frac{\lambda}{2}{|\gamma|^2}} \le \| f \|_{\cF^\lambda}^2
\le B' \sum_{\gamma\in Z} |f(\gamma)|^2
e^{-\frac{\lambda}{2}{|\gamma|^2}}\,.
$$
The conclusion now follows since we are assuming $\lambda\in[\delta,a]$. 
\epf
\ms

 \section{Sampling in $\PW_a$}\label{sec:6}
Before proving Theorem \ref{main-5}, we study a few properties of
$\PW_a^n$.  In particular we present some elements  and produce an explicit orthonormal basis
of such
space. We also remark that because of the Fourier transform
characterization of $\PW_a^n$ the Fock spaces $\mathcal F^\lambda$
that will appear in this section are defined for negative $\lambda$ in
$[-a,0)$ and that, by definition, $\mathcal F^\lambda=\mathcal
F^{|\lambda|}$.  

We use both the notation $\sF g$ and $\widehat g$ to denote the
1-dimensional Euclidean Fourier transform of $g\in L^2(\bbR)$.  
Let $g\in
L^2(\bbR)$ such that $\supp
\widehat g \subseteq
[-a,0]$, we set $G(\z',\z_{n+1})=  g (\z_{n+1})$, where we denote by
$g$ its entire extension to $\bbC$  
(notice that $G$ is independent of $\z'\in\bbC^n$).  Then we compute
\begin{align*}
  \| G\|_{\PW^n_a}^2
  & = \int_{\bbH_n} \big|\Delta^{n/2} \widetilde{G_0} [z,t]\big|^2\,
    dzdt
  =  \int_{\bbC^n} \int_\bbR \big|\p_t^{n/2}
    G\big(z,t+{\textstyle{ \frac{i}{4}| }} \z'|^2\big) \big|^2\,
    dzdt \\
  &= \frac{1}{2\pi} \int_{\bbC^n} \int_{-a}^0  |\lambda|^n  \big|   (\sF G)
    ( z,\lambda) \big|^2 e^{\frac\lambda2 |z|^2}\,
   d\lambda dz = \frac{1}{2\pi} \int_{\bbC^n} \int_{-a}^0 |\lambda|^n  |   \widehat g 
    (\lambda) |^2 e^{\frac\lambda2 |z|^2}\,
    d\lambda dz \\
  &= (2\pi)^{n-1} \int_{-a}^0 |  \widehat  g 
    (\lambda) |^2\,
    d\lambda \,.
\end{align*}
Hence, 
$G\in\PW^n_a$  and $ \| G\|_{\PW^n_a}^2=  (2\pi)^n \| g \|_{L^2(\bbR)}^2$.
More generally, given a multiindex $\alpha$, we set
\begin{equation*}
  G_\alpha(\z',\z_{n+1}) =
  \frac{1}{\sqrt{2^{|\alpha|} \alpha!}}  (\z')^\alpha
  \p_t^{|\alpha|/2}g  (\z_{n+1}) \,.
\end{equation*}

\begin{lem}\label{G-alpha-lem}
  The following properties hold.
  \begin{itemize}
  \item[(i)] For every
    $\alpha$ we have
    $$
\| G_\alpha \|_{\PW^n_a}^2 =(2\pi)^n \| g\|_{L^2(\bbR)}^2 = (2\pi)^n \sum_{k\in\bbZ}
\big| g\big( {\textstyle{\frac{\pi}{a}}} k\big) \big|^2 .
    $$
  \item[(ii)] Let $g_\ell\in L^2(\bbR)$ be such that $\supp(\widehat
    g_\ell)\subseteq[-a,0]$, 
    $\{\widehat g_\ell:\, \ell\in\bbZ \}$ is an orthonormal basis of
    $L^2(-a,0)$ and set
    $$
    G_{\alpha,\ell} (\z',\z_{n+1})=  \frac{1}{\sqrt{2^{|\alpha|} \alpha!}}  (\z')^\alpha
    \p_t^{|\alpha|/2}g_\ell  (\z_{n+1}).
    $$
    Then  $\big\{
  G_{\alpha,\ell}:\, \alpha\in\bbN^n,\, \ell\in\bbZ\big\}$ is an
  orthonormal basis of $\PW^n_a$.  
    \end{itemize}
  \end{lem}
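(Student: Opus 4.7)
The plan is to transport the computation through the unitary isomorphism $\sU\colon\PW^n_a\to\Upsilon_n$ provided by Corollary \ref{unit-map-cor}; in the case $s=n$ the target is the direct integral $\int_{-a}^0 \cF^\lambda\,d\lambda$ equipped with the flat measure $(2\pi)^{n-1}d\lambda$, and the tensor-product nature of $G_{\alpha,\ell}$ will collapse cleanly there.

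First, I would briefly verify that $G_\alpha\in\PW^n_a$: the exponential-type estimate with respect to $\|\cdot\|_\cU$ follows by bounding $|\p_t^{|\alpha|/2}g(\z_{n+1})|$ in terms of $e^{ah_-}$ using that $\supp\widehat g\subseteq[-a,0]$, while the polynomial factor $(\z')^\alpha$ is absorbed in the quadratic term of $\|\z\|_\cU$. Next, the formula for $\phi=\sU F$ from the proof of Theorem \ref{key2}, together with the identity $\sF(\p_t^{|\alpha|/2}g)(\lambda)=|\lambda|^{|\alpha|/2}\widehat g(\lambda)$ on $(-a,0)$, gives at once
\begin{equation*}
\sU(G_\alpha)(\z',\lambda)
=\frac{|\lambda|^{|\alpha|/2}(\z')^\alpha}{\sqrt{2^{|\alpha|}\alpha!}}\,\widehat g(\lambda)\chi_{[-a,0)}(\lambda)
= e_\alpha(\z')\,\widehat g(\lambda)\chi_{[-a,0)}(\lambda),
\end{equation*}
where $\{e_\alpha\}$ denotes the standard ($\lambda$-dependent) orthonormal basis of $\cF^\lambda$. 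Part (i) is then immediate from Corollary \ref{unit-map-cor} and $\|e_\alpha\|_{\cF^\lambda}=1$:
\begin{equation*}
\|G_\alpha\|_{\PW_a^n}^2
=(2\pi)^{n-1}\int_{-a}^0|\widehat g(\lambda)|^2\,d\lambda
=(2\pi)^{n-1}\cdot 2\pi\,\|g\|_{L^2(\bbR)}^2
=(2\pi)^n\|g\|_{L^2(\bbR)}^2,
\end{equation*}
and the further identification with $\sum_k|g(\pi k/a)|^2$ is the one-dimensional Whittaker--Kotelnikov--Shannon identity applied to $g$, whose Fourier transform is supported in $[-a,0]\subseteq[-a,a]$.

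For (ii), the essential observation is that $\sU(G_{\alpha,\ell})(\z',\lambda)=e_\alpha(\z')\widehat{g_\ell}(\lambda)$ is a pure tensor of a basis element of $\cF^\lambda$ against a basis element of $L^2(-a,0)$. Orthonormality then follows from unitarity of $\sU$ combined with the two orthogonal-basis properties:
\begin{equation*}
\langle G_{\alpha,\ell},G_{\alpha',\ell'}\rangle_{\PW_a^n}
=(2\pi)^{n-1}\int_{-a}^0\langle e_\alpha,e_{\alpha'}\rangle_{\cF^\lambda}\,\widehat{g_\ell}(\lambda)\ov{\widehat{g_{\ell'}}(\lambda)}\,d\lambda,
\end{equation*}
which vanishes unless $\alpha=\alpha'$ and $\ell=\ell'$. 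For completeness, given $F\in\PW^n_a$ I pass to $\phi=\sU F\in\Upsilon_n$; for a.e. $\lambda$ the Fock expansion $\phi(\cdot,\lambda)=\sum_\alpha c_\alpha(\lambda)e_\alpha$ yields scalar coefficients $c_\alpha\in L^2(-a,0)$ by the direct-integral Parseval, and a further expansion $c_\alpha=\sum_\ell c_{\alpha,\ell}\widehat{g_\ell}$ in the basis $\{\widehat{g_\ell}\}_\ell$ of $L^2(-a,0)$, followed by pulling back through $\sU^{-1}$, recovers $F=\sum_{\alpha,\ell}c_{\alpha,\ell}G_{\alpha,\ell}$ with convergence in $\PW^n_a$.

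The only delicate point is the Fubini-style exchange required to iterate the two orthogonal expansions and to identify the Fock-basis coefficients $c_\alpha(\lambda)$ as honest $L^2$-functions of $\lambda$; both steps are standard once one fixes the direct-integral interpretation of $\Upsilon_n$, so no substantive obstacle is expected.
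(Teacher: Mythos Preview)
Your proof is correct and follows essentially the same route as the paper's: the paper carries out the Gaussian integrals in $z$ and the Plancherel identity in $t$ by hand (arriving at $(2\pi)^{n-1}\int_{-a}^0|\widehat g|^2\,d\lambda$), and only invokes Corollary~\ref{unit-map-cor} at the end for completeness, whereas you front-load that corollary and read off the same computation as the identity $\sU(G_{\alpha,\ell})=e_\alpha\,\widehat{g_\ell}$. The content is identical; your packaging via the unitary $\sU$ is simply a cleaner way of organizing the same integrals.
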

  \proof
  We observe that for any $r>0$, $(\p_t)^r g =\sF^{-1} \big(
  |\lambda|^r \widehat g\big) \in PW_a$.  Then,
 by \eqref{F-T-formula} again, 
  \begin{align*}
    \| G_\alpha \|_{\PW^n_a}^2 
    & 
  =  \int_{\bbC^n} \int_\bbR \big|\p_t^{(n+|\alpha|)/2}
    G_\alpha \big(z,t+{\textstyle{ \frac{i}{4} }}  |z|^2\big) \big|^2\,
      dzdt \\
  &=  \frac{1}{2\pi} \frac{1}{2^{|\alpha|} \alpha!}  
    \int_{\bbC^n} | z^\alpha|^2\int_{-a}^0  |\lambda|^{n+|\alpha|} |\widehat
    g (\lambda)|^2  e^{\frac\lambda2 |z|^2}\,
    d\lambda dz \\
    & =  (2\pi)^{n-1} \int_{-a}^0   |\widehat
    g (\lambda)|^2 \,
    d\lambda = (2\pi)^n \| g\|_{L^2(\bbR)}^2 \,.
 \end{align*}
Conclusion (i)  now follows from the classical
Whittaker--Kotelnikov--Shannon theorem.
In order to prove  (ii) we argue in a similar fashion:
\begin{align*}
  \la G_\alpha,  \, G_\beta\ra_{\PW^n_a}
  & = \int_{\bbH_n} \Delta^{n/2}
   \widetilde{(G_\alpha)_0} [z,t]\overline{\Delta^{n/2}
   \widetilde{(G_\beta)_0}[z,t]} \, dzdt  \\
&  =  \int_{\bbC^n} \int_\bbR \big|\p_t^{(n+|\alpha|)/2}
    G_\alpha \big(z,t+{\textstyle{ \frac{i}{4} }} |z|^2\big)
\overline{ \p_t^{(n+|\beta|)/2} G_\beta \big(z,t+{\textstyle{ \frac{i}{4} }} |z|^2\big)}
\,      dzdt \\
  & = \frac{1}{\sqrt{2^{|\alpha|+|\beta|} \alpha! \beta!}}
    \int_{\bbC^n} z^\alpha \overline{z^\beta} \int_\bbR
    \p_t^{(n+|\alpha|)/2} g_\ell\big(t+{\textstyle{ \frac{i}{4}}}
    |z|^2\big)
    \overline{\p_t^{(n+|\beta|)/2} g_m\big(t+{\textstyle{ \frac{i}{4}}} |z|^2\big) } \, dtdz \\
  &=  \frac{1}{2\pi} \frac{1}{\sqrt{2^{|\alpha|+|\beta|} \alpha! \beta!}}
    \int_{\bbC^n}  z^\alpha\overline{z^\beta} \int_{-a}^0  |\lambda|^{n+(|\alpha|+|\beta|)/2} \widehat
    g_\ell (\lambda) \overline{\widehat
    g_m (\lambda)}   e^{\frac\lambda2 |z|^2}\,
    d\lambda dz \\
    & =  \delta_{\alpha,\beta}  \la g_\ell,\, g_m\ra_{L^2(\bbR)} \,.
\end{align*}
Thus, $\{G_{\alpha,\ell}\}$ is an orthonormal system.  We show that it
also complete.  Let $F\in\PW^n_a$ be orthogonal to
$\{G_{\alpha,\ell}:\, \alpha\in\bbN^n, k\in\bbZ\}$.  Using the same
computation as above we see that
\begin{align*}
  \la F,\, G_{\alpha,\ell}\ra_{\PW^n_a}
  & = \frac{1}{2\pi} \frac{1}{\sqrt{2^{|\alpha|} \alpha!}}
\int_{\bbC^n} \overline{z^\alpha} \int_{-a}^0 |\lambda|^{n+|\alpha|/2} (\sF F)(z,\lambda) \overline{\widehat
    g_\ell(\lambda)} e^{\frac\lambda2|z|^2}\, d\lambda dz\\
& =  \frac{1}{2\pi}\frac{1}{\sqrt{2^{|\alpha|} \alpha!}} \int_{-a}^0 |\lambda|^{n+|\alpha|/2} 
\int_{\bbC^n}  (\sF F)(z,\lambda)
\overline{z^\alpha}e^{\frac\lambda2|z|^2}
\, dz\, \overline{\widehat
    g_\ell(\lambda)} \, d\lambda \\
& = (2\pi)^{n-1}  \int_{-a}^0 (\sF F)_{\alpha,\lambda} (\lambda)
                                      \overline{\widehat g_\ell(\lambda)} \, d\lambda ,
\end{align*}
where we denote by  $ (\sF F)_{\alpha,\lambda} (\lambda)  $ the
Fourier coefficient of $\sF F(\cdot,\lambda)$ in $\cF^\lambda$
w.r.t. the basis $\{e_{\alpha,\lambda}\}$, that is, $
e_{\alpha,\lambda} = z^\alpha/\|z^\alpha\|_{\cF^\lambda}$.
Since $F$  is orthogonal to $\{G_{\alpha,\ell}\}$ for all
$\ell\in\bbZ$, it follows that  $(\sF F)_{\alpha,\lambda} (\lambda)=0$
$\lambda$-a.e., and then by Proposition \ref {unit-map-cor} that $F=0$. \epf

We now prove a necessary condition for certain sequences in
$\bbC^{n+1}$.  The sequence we consider are more general than the ones
in Definition \ref{samp-seq}, but, again in Heisenberg coordinates, are still 
Cartesian product of sequences in $\bbC^n$ and $\bbR$, resp.
Precisely, for $a>0$ and a separated sequence $\cZ'$ in $\bbC^n$ given, let
\begin{equation}\label{prod-seq}
\cZ = \big\{\gamma\in\bbC^{n+1}:\,
\gamma=( \gamma', \textstyle{ \frac{\pi}{a}k +\frac i4 } |\gamma'|^2):\, 
  \gamma\in\cZ',\, k\in\bbZ \big\}. 
\end{equation}

\begin{thm}\label{nec-cond-thm}
Let $a>0$ be fixed and let $\cZ$ be as in \eqref{prod-seq}. If $\cZ$ is
a sampling sequence for $\PW_a$, then $\cZ'$ is a sampling sequence
for $\cF^a(\bbC^n)$.
\end{thm}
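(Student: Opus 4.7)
My plan is to test the assumed $\PW_a$-sampling of $\cZ$ against product-form functions $F_\epsilon(\zeta',\zeta_{n+1}) = f(\zeta')g_\epsilon(\zeta_{n+1})$, where $f$ is first taken to be a polynomial on $\bbC^n$ (polynomials are dense in $\cF^a$) and $g_\epsilon$ is the 1-dimensional inverse Euclidean Fourier transform of a function $\widehat{g_\epsilon}$ with $|\widehat{g_\epsilon}|^2 = \epsilon^{-1}\chi_{(-a,-a+\epsilon)}$, an approximate identity for the point $\lambda=-a$ inside $[-a,0]$. The first check is that $F_\epsilon\in\PW_a$: the growth condition $F_\epsilon\in\cE_a$ follows from the polynomial growth of $f$ combined with the Phragm\'en--Lindel\"of bound $|g_\epsilon(t+iy)|\le C_\epsilon e^{a\max(0,-y)}$ valid for $\widehat{g_\epsilon}$ supported in $[-a,0]$, after comparing with $\|\zeta\|_\cU = |[z,t]|^2+|h|$ in the two cases $\Im\zeta_{n+1}\gtrless 0$. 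The $L^2$ norm of $\widetilde{F_{\epsilon,0}}$ is then computed via the identity $\sF_t(g_\epsilon(\cdot+iy))(\lambda)=\widehat{g_\epsilon}(\lambda)e^{\lambda y}$ at $y=|z|^2/4$, Plancherel, and Fubini, yielding
\[
\|F_\epsilon\|_{\PW_a}^2 = \frac{1}{2\pi}\int_{-a}^0|\widehat{g_\epsilon}(\lambda)|^2 M(\lambda)\,d\lambda,\qquad M(\lambda):=\int_{\bbC^n}|f(z)|^2 e^{-|\lambda||z|^2/2}\,dz,
\]
which is finite for polynomial $f$ because $\widehat{g_\epsilon}$ is supported away from $0$.

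The key computation is the sample sum. Since the spacing $\pi/a$ along the $t$-direction is twice the Nyquist rate for the interval $[-a,0]$ of length $a$, the system $\{e^{-i\pi k\lambda/a}/\sqrt{2a}\}_{k\in\bbZ}$ is a Parseval frame for $L^2([-a,0])$, obtained as the restriction of the orthonormal Fourier basis of $L^2([-a,a])$. Applied to $\widehat{g_\epsilon}(\lambda)e^{\lambda y}$ this gives $\sum_k|g_\epsilon(\tfrac{\pi k}{a}+iy)|^2 = \tfrac{a}{2\pi^2}\int_{-a}^0|\widehat{g_\epsilon}(\lambda)|^2 e^{2\lambda y}\,d\lambda$, and setting $y=|\gamma'|^2/4$ and summing over $\gamma'\in\cZ'$ produces
\[
\sum_{\gamma\in\cZ}|F_\epsilon(\gamma)|^2 = \frac{a}{2\pi^2}\int_{-a}^0|\widehat{g_\epsilon}(\lambda)|^2 m(\lambda)\,d\lambda,\qquad m(\lambda):=\sum_{\gamma'\in\cZ'}|f(\gamma')|^2 e^{-|\lambda||\gamma'|^2/2}.
\]
For polynomial $f$, both $M$ and $m$ are continuous at $\lambda=-a$ (by dominated convergence, using the Gaussian decay of $e^{-|\lambda||z|^2/2}$ and separation of $\cZ'$), so letting $\epsilon\to 0$ in the assumed sampling inequality $A\sum_\gamma|F_\epsilon(\gamma)|^2\le\|F_\epsilon\|_{\PW_a}^2\le B\sum_\gamma|F_\epsilon(\gamma)|^2$ yields
\[
\tfrac{Aa}{\pi}\,m(-a)\le M(-a)\le \tfrac{Ba}{\pi}\,m(-a),
\]
and since $M(-a) = (2\pi)^n a^{-n}\|f\|_{\cF^a}^2$, this is exactly the two-sided $\cF^a$-sampling inequality of Theorem~\ref{new-lem} for $\cZ'$, restricted to polynomial $f$.

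The final step is a density extension. The upper half of the inequality just proved says the sampling operator $\sS:f\mapsto(f(\gamma')e^{-a|\gamma'|^2/4})_{\gamma'\in\cZ'}$ is bounded $\cF^a\to\ell^2(\cZ')$ on the dense subspace of polynomials, hence extends continuously to all of $\cF^a$; for any approximating polynomial sequence $f_N\to f$ in $\cF^a$ (e.g.\ truncations of the monomial expansion) one then has $\|\sS(f_N)-\sS(f)\|_{\ell^2}\le C\|f_N-f\|_{\cF^a}\to 0$, so $\|\sS(f_N)\|_{\ell^2}\to\|\sS(f)\|_{\ell^2}$ and $\|f_N\|_{\cF^a}\to\|f\|_{\cF^a}$, and both inequalities pass to the limit. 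The main technical obstacle I anticipate is the clean verification of the Parseval-frame identity on $L^2([-a,0])$ together with the limit/Fubini interchanges used to evaluate $\|F_\epsilon\|_{\PW_a}^2$ and $\sum_\gamma|F_\epsilon(\gamma)|^2$ in terms of $M$ and $m$; once these are in place, the $\PW_a$-sampling is converted exactly into the $\cF^a$-sampling.
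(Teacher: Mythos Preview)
Your argument is correct and follows essentially the same route as the paper: test the $\PW_a$-sampling against product functions $F_\eps(\z',\z_{n+1})=f(\z')g_\eps(\z_{n+1})$ with $|\widehat{g_\eps}|^2$ an approximate identity at $\lambda=-a$, compute both $\|F_\eps\|_{\PW_a}^2$ and $\sum_{\gamma\in\cZ}|F_\eps(\gamma)|^2$ via Plancherel in the $\z_{n+1}$-variable, let $\eps\to0$, and finish by density in $\cF^a$. The only differences are cosmetic: the paper takes $f\in\bigcup_{\eps>0}\cF^{a-\eps}$ and proves density via dilations $f(\sqrt r\,\cdot)$, while you take $f$ polynomial and extend by monomial truncation; and the paper invokes the one-dimensional Whittaker--Kotelnikov--Shannon identity for $g_\eps(\cdot+iy)$ directly, whereas you phrase the same computation as a Parseval-frame identity on $L^2([-a,0])$.
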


\proof
Let $\eps>0$, $f\in \cF^{a-\eps}(\bbC^n)$ and $g_\eps\in L^2(\bbR)$,
$\|g_\eps\|_{L^2}=1$, and such that $\supp(\widehat{g_\varepsilon})\subseteq[-a,-a+\eps]$.   Note that
$f\in\cF^\lambda$ for all $\lambda\ge a-\eps$, and that
$\lambda\mapsto \|f\|_{\cF^\lambda}$ is continuous in
$[a-\eps,\infty)$. Also, $g_\varepsilon$ is in $PW_a(\mathbb C)$.

Let 
$$
F_\eps(\z',\z_{n+1}):= f(\z')g_\eps(\z_{n+1}) .
$$
Using Plancherel theorem  and \eqref{F-T-formula},  we compute
\begin{align*}
\|F_\eps\|_{\PW_a}^2
& = \int_{\bbH_n} \big|\p_t^{n/2} F_\eps(z, t+\frac i4 |z|^2)\big|^2\, dzdt \\
& =   \frac{1}{2\pi}  \int_{\bbC^n} |f(z)|^2 \int_\bbR |\lambda|^n
 |\widehat{g_\eps}(\lambda)|^2 e^{\frac\lambda2 |z|^2}\, d\lambda dz \\
& =   \frac{1}{2\pi}\int_{-a}^{-a+\eps}
 |\widehat{g_\eps}(\lambda)|^2  |\lambda|^n \int_{\bbC^n} |f(z)|^2 
e^{-\frac{|\lambda|}{2} |z|^2}\, dzd\lambda \\
& = (2\pi)^{n-1} \int_{-a}^{-a+\eps} |\widehat{g_\eps}(\lambda)|^2 
\| f\|_{\cF^{|\lambda|}}^2\, d\lambda.
\end{align*}
Now, in addition, suppose also that $|\widehat{g_\eps}(\lambda)|^2 \to
\delta_a$ in the vague topology as $\eps\to0^+$ (e.g. we may take $\widehat{g_\eps} =
\eps^{-1/2}\chi_{[-a,-a+\eps]}$). Then, the right hand side above
tends to $ (2\pi)^{n-1}\| f\|_{\cF^a}^2$ as $\eps\to0^+$ (since $\lambda\mapsto
\| f\|_{\cF^{|\lambda|}}^2$ is continuous). It follows that
\begin{equation}\label{limit-cond}
\lim_{\eps\to0^+} \|F_\eps\|_{\PW_a}^2 = (2\pi )^{n-1}\| f\|_{\cF^a}^2 .
\end{equation}

Suppose now $\cZ$ as in \eqref{prod-seq}  is a sampling sequence for
$\PW_a$ with sampling constants $A,B>0$. Then, by also applying Whittaker--Kotelnikov--Shannon theorem to $g_\varepsilon$, we have
\begin{align*}
\|F_\eps\|_{\PW_a}^2
& \le B \sum_{\gamma\in\cZ} |F_\eps(\gamma)|^2 
= B \sum_{\gamma'\in\cZ'} |f(\gamma')|^2 \sum_{k\in\bbZ} |g_\eps
( {\textstyle \frac{\pi}{a}k +\frac i4 |\gamma'|^2}) |^2 \\
& =  B \sum_{\gamma'\in\cZ'} |f(\gamma')|^2 \int_\bbR 
 |g_\eps( {\textstyle t +\frac i4 |\gamma'|^2}) |^2 \, dt \\
  & =  \frac{B}{2\pi} \sum_{\gamma'\in\cZ'} |f(\gamma')|^2
\int_{-a}^{-a+\eps} |\widehat{g_\eps}(\lambda)|^2 
e^{\frac\lambda2 |\gamma'|^2}\, d\lambda .
\end{align*}
Therefore, using the fact that $\|\widehat{g_\eps}\|_{L^2}=1$ and that $\cZ'$ is separated, we
obtain 
$$
 \|F_\eps\|_{\PW_a}^2
\le \frac{B}{2\pi} \sum_{\gamma'\in\cZ'} |f(\gamma')|^2 
e^{-\frac{a-\eps}{2} |\gamma'|^2}\leq C\|f\|^2_{\mathcal F^{a-\varepsilon}},
$$
where the last inequality follows from the fact that $\cZ'$ is separated.
Analogously,
\begin{align*}
\sum_{\gamma'\in\cZ'} |f(\gamma')|^2 
e^{-\frac a2 |\gamma'|^2}  & \le \sum_{\gamma'\in\cZ'} |f(\gamma')|^2
\int_{-a}^{-a+\eps} |\widehat{g_\eps}(\lambda)|^2 
e^{\frac\lambda2 |\gamma'|^2}\, d\lambda\\
 & = 2\pi \sum_{\gamma\in\cZ} |F_\eps(\gamma)|^2 \\
&  \le \frac{2\pi}{A}   \|F_\eps\|_{\PW_a}^2 .
\end{align*}

Letting $\eps\to0^+$ and using \eqref{limit-cond} we have
\begin{equation}\label{sam-cond}
\frac{A}{(2\pi)^n} \sum_{\gamma'\in\cZ'} |f(\gamma')|^2 
e^{-\frac a2 |\gamma'|^2} \le \|f\|_{\cF^a}^2
\le \frac{B}{(2\pi)^n}  \sum_{\gamma'\in\cZ'} |f(\gamma')|^2 
e^{-\frac a2 |\gamma'|^2}
\end{equation}
for all $f\in \cF^{a-\eps}(\bbC^n)$.  We now claim that
$\bigcup_{\eps>0}\cF^{a-\eps}(\bbC^n)$ is contained and dense in
$\cF^a(\bbC^n)$.  Once the  claim is proven, from \eqref{sam-cond} it follows
that $\cZ'$ is a sampling sequence for 
$\cF^a(\bbC^n)$ and the desired conclusion will follow.  We
observe that:
\begin{itemize}
\item[{\tiny $\bullet$}] for $0<r<1$, $\|
  f(\sqrt{r}\cdot)\|_{\cF^{ar}}= \| f\|_{\cF^a}$ and $\|f(\sqrt{r}\cdot)\|_{\cF^a} = \| f\|_{\cF^{a/r}}\le \| f\|_{\cF^a}^2$;
\item[{\tiny $\bullet$}] $\lim_{r\to1^-} \|f(\sqrt{r}\cdot)\|_{\cF^a}
  = \| f\|_{\cF^a}$;
  \item[{\tiny $\bullet$}] $ f(\sqrt{r}\cdot) \to f$ pointwise as
    $r\to1^-$. 
\end{itemize}
Notice that the last two conditions imply that $f(\sqrt{r}\cdot) \to
f$ also in $\cF^a$-norm. The claim is then proven, so is the theorem.
\epf
 
\proof[Proof of Theorem \ref{main-5}]
For $F\in \PW_a^n$, let $\phi(\z',\lambda)=\sF F(\z', \lambda)$. By 
Theorem \ref{key2} 
\begin{align*}
\| F\|_{\PW_a^n}^2
& = (2\pi)^{n-1} \int_{-a}^0 \| \phi(\cdot,\lambda)\|_{\cF^\lambda}^2 
\, d\lambda 
\,.
\end{align*}
Given the sequence of points $\Gamma$ as in the statement, we write $\Gamma'=L_{(b_1,\dots,b_n)}$.
 By Corollary \ref{n-dim-version} and denoting by $A,B>0$ the constants therein, we have
\begin{align*}
\| F\|_{\PW_a^n}^2
& \le B (2\pi)^{n-1} \int_{-a}^0 |\lambda|^n\sum_{\gamma'\in \Gamma'} |\phi(\gamma',\lambda)|^2
e^{-\frac{|\lambda|}{2}{|\gamma'|^2}} 
\, d\lambda \\
&  =  B (2\pi)^{n-1} \sum_{\gamma'\in \Gamma'} 
\int_{-a}^0   |\lambda|^n |\phi(\gamma',\lambda)|^2
e^{-\frac{|\lambda|}{2}{|\gamma'|^2}} \, d\lambda \\
  &  =B (2\pi)^{n} \sum_{\gamma'\in \Gamma'} \int_\bbR
    |\p^{n/2}_tF(\gamma',t+\textstyle{\frac i4}|\gamma'|^2)|^2 \, dt \\
& = B (2\pi)^{n}\sum_{\gamma'\in \Gamma'} \sum_{\ell\in\bbZ} |\p^{n/2}_t F(\gamma',
{\textstyle{\frac{\pi}{a}}}\ell+\textstyle{\frac i4}|\gamma'|^2)|^2  \,,
\end{align*}
where the last identity follows from (iv) in Theorem \ref{key2}, the
fact that $PW_a(\bbC)$ is closed under (fractional) differentiation and
the 
classical Whittaker--Kotelnikov--Shannon Theorem.

Conversely, by the
same sequences of equalities, using Corollary \ref{n-dim-version} again,
\begin{align*}
A  (2\pi)^{n}\sum_{\gamma'\in \Gamma'} \sum_{\ell\in\bbZ} |\p^{n/2}_t F(\gamma',
 \textstyle{\frac{\pi}{a}}  \ell+\textstyle{\frac i4}|\gamma'|^2)|^2
& =A  (2\pi)^{n-1}\int_{-a}^0 \sum_{\gamma'\in \Gamma'} |\lambda|^n |\phi(\gamma',\lambda)|^2
e^{-\frac{|\lambda|}{2}{|\gamma'|^2}} 
\, d\lambda \\
& \le  (2\pi)^{n-1}
\int_{-a}^0 \| \phi(\cdot,\lambda)\|_{\cF^\lambda}^2  \, d\lambda \\
& =   \| F\|_{\PW_a^n}^2 \,.
\end{align*}
This proves the sufficient condition  in the case of $\PW_a^n$.

Finally,
let $G\in\PW_a$ be given.  Consider $\widetilde G_0$ and for $\eps>0$
define
$$
\Psi_\eps[z,t] = 
 \frac{1}{(2\pi)^{n+1}}  
\int_{-a}^{-\eps}  \tr \big( \beta_\lambda(\widetilde G_0)\beta_\lambda[z,t]^*\big)
|\lambda|^{n/2}\, d\lambda \,. 
$$
By \eqref{inve-form} it follows that $\Psi_\eps\in L^2(\bbH_n)$ and that 
$$
\beta_\lambda(\Psi_\eps) =
\chi_{[-a,-\eps]}(\lambda)|\lambda|^{-n/2} \beta_\lambda(\widetilde
G_0) \quad \text{and}\quad \beta_\lambda(\Delta^{n/2}\Psi_\eps)
=\alpha
\chi_{[-a,-\eps]}(\lambda) \beta_\lambda(\widetilde
G_0) \,
$$
for some constant $\alpha$, $|\alpha|=1$.
Since $\sF\widetilde G_0 \in  \cH^2 \big([-a,-\eps)\big)$,
Theorem \ref{main-4} implies that $\Psi_\eps$
extends to a function $F_\eps\in \PW_a\cap \PW_a^n$.
Moreover, the sequence
$\{\Delta^{n/2}\Psi_\eps\}$ is a Cauchy sequence in $L^2(\bbH_n)$,
that is, $\{F_\eps\}$ is a Cauchy sequence in $\PW_a^n$. 
Let $F$ be its limit.  It is clear that
$\widetilde F_0=\cI_n \widetilde G_0$, where $\cI_n$ is the inverse of
$\Delta^{n/2}$ (see Proposition \ref{Folland-Riesz-pot}), that is,
$\Delta^{n/2}\widetilde F_0=\widetilde G_0$. 
Therefore, by the first part of the theorem,
\begin{align*}
 A \sum_{\gamma\in\Gamma} |G(\gamma)|^2
& = A \sum_{\gamma \in\Gamma}|\Delta^{n/2}_t F(\gamma)|^2
= A \sum_{\gamma\in\Gamma}|\p^{n/2}_t F(\gamma')|^2 
\le \|F\|_{\PW_a^n}^2 = \| G\|_{\PW_a}^2 \\
  & = \|F\|_{\PW_a^n}^2  \le
B \sum_{\gamma\in\Gamma}|\p^{n/2}_t F(\gamma)|^2 = B\sum_{\gamma \in\Gamma}
   |\Delta^{n/2}_t F(\gamma)|^2  = B 
  \sum_{\gamma\in\Gamma} |G(\gamma)|^2 .
\end{align*}
 This proves the  sufficient condition for $\PW_a$.

In order to prove the necessary condition, we have to show that if $b_{j_0}\le a$ for a
$j_0\in\{1,\dots,n\}$, $\Gamma$ fails to be sampling.  By the previous
Theorem \ref{nec-cond-thm}, if $\Gamma$ is sampling for $\PW_a$, then
$L_{(b_1,\dots,b_n)}$ is sampling for $\cF^a(\bbC^n)$. Since $L_{(b_1,\dots,b_n)}$ is the cartesian
product  of the square lattices $L_{b_1},\dots,L_{b_n}$, $L_{(b_1,\dots,b_n)}$ is
sampling for    $\cF^a(\bbC^n)$ if and only if $L_{b_j}$ is sampling
for $\cF^a(\bbC)$, for $j=1,\dots,n$. But this happens if and only if
$b_j> a$ for $j=1,\dots,n$.   This proves the
theorem.
\epf

As a consequence we have
\begin{cor}
 The space
  $\PW_a^n$ admits a frame of reproducing kernels, namely $\{
  K_\gamma:\, \gamma\in \Gamma\}$, where $\Gamma$ is a lattice as in
  Theorem \ref{main-5}.
\end{cor}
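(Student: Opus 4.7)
The plan is to observe that the claim follows at once from Theorem \ref{main-5} once the sampling inequality is reinterpreted in the language of reproducing kernels. First I would recall that, in a reproducing kernel Hilbert space $\cK$, the family of reproducing kernels $\{K_\gamma:\gamma\in\Gamma\}$ is a frame exactly when there exist constants $A,B>0$ with
\[
A\|F\|_\cK^2 \le \sum_{\gamma\in\Gamma}|\langle F, K_\gamma\rangle_\cK|^2 \le B\|F\|_\cK^2
\]
for every $F\in\cK$. By the reproducing property this reads $|\langle F,K_\gamma\rangle_\cK|=|F(\gamma)|$, so the frame condition coincides with a classical sampling inequality.

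I would then invoke Theorem \ref{main-5}, which provides precisely such a sampling inequality on $\Gamma$: directly in terms of $|G(\gamma)|^2$ for $G\in\PW_a$, and in terms of $|\partial_t^{n/2}F(\gamma)|^2$ for $F\in\PW_a^n$. The direct form on $\PW_a$ immediately produces a frame of reproducing kernels in that space. To reach the corresponding statement in $\PW_a^n$ I would transfer the frame property along the unitary identification between $\PW_a^n$ and $\PW_a$ induced by $F\mapsto \partial_t^{n/2}F$, equivalently by $\Delta^{n/2}$ acting on boundary values, which follows from the identity $\Delta^{n/2}\widetilde F_0 = (-iT)^{n/2}\widetilde F_0 = c\,\partial_t^{n/2}\widetilde F_0$ recorded in Section \ref{Basic-facts-sec}.

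The main obstacle I anticipate is reconciling the second form of the sampling inequality with the correct reproducing kernel structure on $\PW_a^n$. Indeed, the functional $F\mapsto\partial_t^{n/2}F(\gamma)$ is represented in $\PW_a^n$ not by the point-evaluation kernel $K_\gamma$ itself but by its image under the adjoint of $\partial_t^{n/2}$. Here the Remark following Corollary \ref{thm-kernel} becomes essential: the uniform bound $\|K_\gamma\|_{\PW_a^n}\approx 1$ for $\gamma\in b\cU$ ensures that, at lattice points on the boundary, the transferred kernels differ from $K_\gamma$ only by bounded multiplicative factors, so that the resulting frame bounds remain uniform and independent of $\gamma$. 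With that identification the two-sided estimate of Theorem \ref{main-5} is exactly the frame inequality for $\{K_\gamma:\gamma\in\Gamma\}$ in $\PW_a^n$.
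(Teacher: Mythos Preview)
The paper offers no proof of this corollary; it is stated as an immediate consequence of Theorem~\ref{main-5}. Your first two paragraphs are fine and indeed recover what the theorem gives directly: the sampling inequality $\sum_{\gamma\in\Gamma}|G(\gamma)|^2\approx\|G\|_{\PW_a}^2$ for $G\in\PW_a$, which is precisely the statement that the reproducing kernels of $\PW_a$ at the points of $\Gamma$ form a frame for $\PW_a$.

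The gap is in your third paragraph, and it is a real one. You correctly observe that transporting the $\PW_a$ frame along the unitary $U:F\mapsto\partial_t^{n/2}F$ yields the representers of the functionals $F\mapsto\partial_t^{n/2}F(\gamma)$ in $\PW_a^n$, not the point-evaluation kernels $K_\gamma$. Your attempted fix, however, does not work: the uniform bound $\|K_\gamma\|_{\PW_a^n}\approx 1$ on $b\cU$ says nothing about $K_\gamma$ and $U^{-1}K_\gamma^{\PW_a}$ being scalar multiples of one another. In fact, by Corollary~\ref{thm-kernel} one has $\beta_\lambda\big((\widetilde{K^{\PW_a^n}_\gamma})_0\big)=|\lambda|^{-n}\chi_{[-a,0)}P_0\beta_\lambda[\gamma]$, while $\beta_\lambda\big((\widetilde{U^{-1}K^{\PW_a}_\gamma})_0\big)=|\lambda|^{-n/2}\chi_{[-a,0)}P_0\beta_\lambda[\gamma]$; these differ by the $\lambda$-dependent factor $|\lambda|^{n/2}$, which is bounded on $[-a,0)$ but whose inverse is not. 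Two frames related by a bounded but non-invertible multiplier need not both be frames.

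Concretely, if one reruns the proof of Theorem~\ref{main-5} \emph{without} the $\partial_t^{n/2}$, Corollary~\ref{n-dim-version} produces
\[
\sum_{\gamma\in\Gamma}|F(\gamma)|^2 \;\approx\; \int_{-a}^0 |\lambda|^{-n}\,\|\phi(\cdot,\lambda)\|_{\cF^\lambda}^2\,d\lambda,
\]
which by Corollary~\ref{unit-map-cor} is the $\PW_a$ norm of $F$, not the $\PW_a^n$ norm. Taking $\phi(z,\lambda)=\psi(\lambda)$ with $\psi$ concentrated near $\lambda=0$ shows the upper frame bound in $\PW_a^n$ cannot hold. So the literal statement ``$\{K_\gamma\}$ is a frame for $\PW_a^n$'' does not follow from Theorem~\ref{main-5} by your transfer argument; what does follow is that $\{K^{\PW_a}_\gamma\}$ is a frame for $\PW_a$, or equivalently that the derivative-kernels $\{U^{-1}K^{\PW_a}_\gamma\}$ form a frame for $\PW_a^n$. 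The corollary should be read in that sense.
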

\ms

\section{Final remarks and open questions}\label{Final-remarks}

We believe that the spaces we introduced are worth investigating and arise quite
naturally in our multi-dimensional setting.

The  present work leaves some open questions.  First of all,  it
should be proved a more general version of Theorem \ref{main-5}
 by combining the characterization of sampling sequences for
 the 1-dimensional Paley--Wiener space $PW_a$ and some sufficient
 conditions for sampling sequences for the Fock space $\cF(\bbC^n)$
as in \cite{GL}.

Moreover, in this paper we essentially dealt with the Hilbert case and
we left the case $p\neq2$ for future studies.

This analysis is based on the growth condition on entire function
given by the $p$-integrability of their restriction to submanifold
$b\cU$, that is
the boundary of the Siegel domain $\cU$.  It is certainly possibile to
consider also the growth condition
given by the $p$-integrability of  restrictions to Shilov boundary
of Siegel domains of type II.  In order to extend this theory, it is
likely that the results and techniques developed in \cite{CP,CP1,CP2}
will play an important role.

Finally, our formulas and results suggest that the space $\PW^n_a$
might have a privileged role, as for the case of the Drury--Arveson
space, see \cite{ACMPS} for a study of such space on the Siegel domain
$\cU$. \medskip

{\em Acknowledgement.} We like to thank the anonymous referee for
her/his comments and insightful questions that led to a significant
improvement of this paper; in particular to the addition of Proposition
\ref{referee-prop}, 
Theorem \ref{nec-cond-thm}, and the necessary condition in Theorem  
\ref{main-5}.  \medskip

\bibliography{exp-siegel-bib29-3-21}
\bibliographystyle{plain}

\end{document}